\newcommand{\Om}{\Omega}
\newcommand{\ome}{\omega}
\newcommand{\vep}{\varepsilon}
\newcommand{\be}{\begin{equation}}
\newcommand{\ee}{\end{equation}}
\newcommand{\bea}{\begin{eqnarray}}
\newcommand{\eea}{\end{eqnarray}}
\newcommand{\beaa}{\begin{eqnarray*}}
\newcommand{\eeaa}{\end{eqnarray*}}
\newcommand{\lan}{ \langle }
\newcommand{\ran}{ \rangle}
\newcommand{\gam}{\gamma}
\newcommand{\lam}{ \lambda}
\newcommand{\MF}{\mathcal{F}}
\newcommand{\MM}{\mathbb{M}}
\newcommand{\ML}{\mathcal{L}}
\newcommand{\MY}{\mathcal{Y}}
\newcommand{\MZ}{\mathcal{Z}}
\newcommand{\mC}{\mathbb{C}_{T,d}}
\newcommand{\D}{\mathbb{D}_{T,d}}
\newcommand{\hD}{\hat{\mathbb{D}}_{T,d}}
\newcommand{\hC}{\hat{\mathbb{C}}_{T,d}}
\newcommand{\ty}{\tilde{y}}
\newcommand{\hY}{\hat{Y}}
\newcommand{\hZ}{\hat{Z}}
\newcommand{\hf}{\hat{f}}
\newcommand{\hPhi}{\hat{\Phi}}
\newcommand{\hu}{\hat{u}}
\newcommand{\td}{\tilde{d}}
\newcommand{\tB}{\tilde{B}}
\newcommand{\teta}{\tilde{\eta}}
\newcommand{\txi}{\tilde{\xi}}
\newcommand{\tX}{\tilde{X}}
\newcommand{\tY}{\tilde{Y}}
\newcommand{\tU}{\tilde{U}}
\newcommand{\tMY}{\tilde{\MY}}
\newcommand{\tty}{\tilde{y}}
\newcommand{\tZ}{\tilde{Z}}
\newcommand{\tome}{\tilde{\omega}}
\newcommand{\tMF}{\tilde{\MF}}
\newcommand{\tc}{\tilde{c}}
\newcommand{\tP}{\tilde{P}}
\newcommand{\tf}{\tilde{f}}
\newcommand{\tOmega}{\tilde{\Omega}}
\newcommand{\bE}{\bar{E}}
\newcommand{\bB}{\bar{B}}
\newcommand{\bareta}{\bar{\eta}}
\newcommand{\bxi}{\bar{\xi}}
\newcommand{\bX}{\bar{X}}
\newcommand{\gamp}{\gam'}
\newcommand{\etap}{\eta'}
\newtheorem{thm}{Theorem}[section]
\newtheorem{lem}[thm]{Lemma}
\newtheorem{coro}[thm]{Corollary}
\newtheorem{rem}[thm]{Remark}
\newtheorem{prop}[thm]{Proposition}
\newtheorem{example}[thm]{Example}
\newtheorem{defi}[thm]{Definition}
\newcommand{\E}{\mathbb{E}}
\newcommand{\tE}{\tilde{\mathbb{E}}}
\newcommand{\R}{\mathbb{R}}
\newcommand{\bS}{\mathbb{S}}
\newcommand{\bH}{\mathbb{H}}
\newcommand{\FC}{\mathscr{C}}
\newcommand{\op}{\mathcal{P}}
\newcommand{\FD}{\mathscr{D}}
 \newcommand{\gl}{{\gamma_t, \eta_t}}
\newcommand{\Z}{\mathbb{Z}}
\newcommand{\bx}{\mathbf{x}}
\newcommand{\Bf}{\mathbf{f}}
\begin{document}

\title{Classical solution of path-dependent mean-field semilinear PDEs}  



\author{Shanjian Tang}
\address{S. Tang, School of Mathematical Sciences, Fudan University, Handan Road 220, 200433, Shanghai, PRC.}
\email{sjtang@fudan.edu.cn}

\author{Huilin Zhang}
\address{H. Zhang, Research Center for Mathematics and Interdisciplinary Sciences, Shandong University, Binhai Road 72, 266237, Qingdao, PRC.}
\email{huilinzhang@sdu.edu.cn}
    



\keywords{path-dependent, classical solution, mean-field PDE, It\^o-Dupire formula} 

\subjclass{60G22, 60H10, 34C29} 




%


\begin{abstract}The paper concerns classical solution of path-dependent partial differential equations (PPDEs) with coefficients depending on both variables of path and path-valued measure, which are crucial to understanding large-scale mean-field interacting systems in a non-Markovian setting. We construct classical solutions of the PPDEs via solution of the forward and backward stochastic differential equations. To accommodate the intricacies introduced by the appearance of the path  in the coefficients, we develop a novel technique known as the ``parameter frozen'' approach to the PPDEs.

\end{abstract}

\maketitle

\tableofcontents


\newcommand{\ABS}[1]{\left(#1\right)} 
\newcommand{\veps}{\varepsilon} 



\section{Introduction}

\ \ \ \ \ \ Denote by $\mC$ the space of continuous functions on $[0,T]$ with values in $\R^d$ and by $\op^C_2$ the totality of probability measures on $ \mC$ with finite second order moments. Given functions $(b_1,\sigma_1)$ on $\mC$ and  $(b_2,\sigma_2)$ on $\op^C_2,$ we investigate a path-dependent mean-field PDE given by
\be\label{1}
\left\{
\begin{array}{l}
\partial_t u(t,\ome, \mu ) + \frac12 \text{Tr}\ [ \partial_{\ome}^2 u(t,\ome,\mu)\sigma_1(\ome_t)\sigma_1(\ome_t)^T]  + \partial_\ome u(t,\ome,\mu) b_1(\ome_t)\\[2mm]
\quad + \frac12  \text{Tr}\ [ \int_{\mC} \partial_{\tome} \partial_\mu u(t,\ome, \mu, \tome )\mu(d\tome)\sigma_2(\mu_t)\sigma_2(\mu_t)^T] + \int_{\mC} \partial_{\mu} u(t,\ome,\mu,\tome ) \mu(d\tome)b_2(\mu_t) \\[2mm]
\ \ \ + f(t,\ome, u(t,\ome,\mu), \sigma_1(\ome_t)\partial_\ome u(t,\ome,\mu), \mu, \ML_{u(t,W^\mu,\mu)})=0,\\
 \\
 u(T, \ome, \mu)= \Phi(\ome, \mu), \ \ \ (t,\ome,\mu)\in [0,T]\times \mC \times \op^C_2.
\end{array}
\right.
\ee
In this equation, (functional) derivatives $\partial_\ome$ and $\partial_\mu$ are taken in the spirit of Dupire and Lions (see the subsequent definitions \eqref{def-dx} and \eqref{fre}), respectively, and $W^\mu$ represents the canonical processes on $\mC$ under $\mu.$ The study of mean-field PDEs (or master equations) is crucial in understanding large systems in physics, games, and other areas of applied mathematics. While classical mean-field theory has its roots in statistical physics, quantum mechanics, and quantum chemistry (see Kac \cite{K56}, McKean \cite{M67}, Sznitman \cite{S84,S85,S86,S91}), recent developments have extended its applications to areas like stochastic differential games, partial differential equations (PDEs), and stochastic control, impacting fields such as engineering and economics (see e.g. \cite{L}, \cite{C12}, \cite{BFY13}, \cite{CD1}, \cite{CD2}, \cite{GLL}), just to mention a few.\\

 \ \ \ \ \ \ Mean-field PDEs have been studied  in various frameworks: Bensoussan et al. \cite{BFY14} consider the regular case when measure variables are restricted on those measures of square integrable density functions,  Cardaliaguet \cite{C12} gives a viscosity solution for  first-order HJB equations on a Wasserstein space,  Gomes and Saude \cite{GS14} survey well-posedness of HJB-FP equations for reduced mean-field games,  Buckdahn et al. \cite{BLP17} and Chassagneux et al. \cite{CCD} study classical solutions for second order master equations through stochastic differential equations (SDEs) and forward backward stochastic differential equations (FBSDEs) respectively, Carmona and Delarue \cite{CD3} consider the mean-field games and corresponding master equation with common noise,  Cardaliaguet et al. \cite{CDLL} give an analytic approach for master equations, Pham and Wei \cite{PW18} study the dynamic programming principle for Bellman master equation, Gangbo et al. study the well-posedness of master equations under non-monotonic conditions, etc. However, all these works consider the state-dependent case, which means $(\ome, \mu)$ in Equation~\eqref{1} take values in $\R^d \times \op_2(\R^k)$. Here,  $\op_2(\R^k)$ is the set of probability measures on $\R^k$ with finite second order moments. In practice, numerous problems could be non-Markovian or path-dependent: to mention a few, prices of  exotic options (e.g. Asian, chooser, lookback and barrier options \cite{D09}, \cite{CV91}, \cite{IM17}, \cite{H07}), stochastic differential game and stochastic control with delayed information (\cite{B92}, \cite{GM06}, \cite{S20}, \cite{Z17}, \cite{SZ19}), rough volatility \cite{GJR14}, \cite{BFG16}, etc. In particular,  Peng in his ICM 2010 lecture \cite{P} introduces the connection between non-Markovian FBSDEs and so called path-dependent PDEs (PPDEs), the latter of which is regarded as a crucial tool in the non-Markovian control theory.\\

\ \ \ \ \ \  Dupire \cite{D09} introduces a functional It\^o formula to incorporate the calculus of path-dependent functionals, which is subsequently developed by Cont-Fourni\'e \cite{CF10,CF13} and references therein (on the other hand, see another approach to path-dependent problems of  Flandoli and Zanco \cite{FZ16} by lifting the primal problem into a functional one in Banach spaces). In contrast to the classical approach of functional analysis (see e.g. Ahn \cite{A}), Dupire's approach is featured by the finite dimensional {\it vertical derivative} (see the following definition \eqref{def-dx}), and is admitted to solve non-Markovin problems (see e.g. \cite{TZ15}, \cite{S20}). Concerning the well-posedness of PPDEs, Peng and Wang \cite{PW16} consider smooth solutions of parabolic PPDEs; Ekren et al. \cite{EKTZ14,ETZ1,ETZ2} study the viscosity solution of quasilinear and fully nonlinear PPDEs; Cosso et al. \cite{CFGRT} treat PPDEs as the Hilbert space valued equations and build the viscosity solution; Peng and Song \cite{PS15} introduce a new path derivative and build Sobolev solutions for corresponding parabolic fully-nonlinear PPDEs via $G-$BSDEs \cite{P19}; Wu and Zhang \cite{WZ20} solve
a master equation with solutions in a form of $V(t, \mu)$, $\mu\in \op_2^C$. Recently, new viscosity solutions are introduced from different viewpoints by Zhou \cite{Z20}, Bouchard et al. \cite{BLT21} and Cosso et al. \cite{CGRR23}. \\

\ \ \ \ \ \ \ Although several  definitions of viscosity solutions are available,  the understanding of a smooth solution seems to be still very limited. The  well-understood smooth solution of PPDE
\be\label{ppde-1}
\left\{
\begin{array}{l}
\partial_t u(t,\ome  ) + \frac12 \text{Tr}\ [ \partial_{\ome}^2 u(t,\ome )\sigma(\ome_t)\sigma(\ome_t)^T]  + \partial_\ome u(t,\ome ) b (\ome_t)\\[2mm]
\ \ \ + f(t,\ome, u(t,\ome ), \sigma(\ome_t)\partial_\ome u(t,\ome) )=0,\\
 \\
 u(T, \ome)= \Phi(\ome ), \ \ \ (t,\ome)\in [0,T]\times \mC,
\end{array}
\right.
\ee
seems to be restricted within the case where $(b,\sigma)=(0,I)$ (though  $(b,\sigma)$ can be extended to be state-dependent in some sense). Here and in the following, we write $\ome(t)$ to denote the value of path $\ome$ at time $t,$ and $\ome_t=\ome_t(\cdot)=\ome(t\wedge .)$ to denote the path up to time $t.$ The problem comes from the definition of vertical derivatives. To be more precise, consider path-dependent SDE
\be 
\begin{split}
X^{\gam_t}(s)= \gam(t)+ \int_t^s b(X^{\gam_t}_r) dr + 	\int_t^s \sigma(X^{\gam_t}_r) d B(r),
\end{split}
\ee 
and a functional $\Phi$ defined on c\`adl\`ag space. Then 
we ask  whether or when we have the vertical differentiability of $\Phi(X^{\gam_t})$ in $\gam_t$, which is necessary to study the smooth solution of \eqref{ppde-1} via FBSDEs, even when both path functionals $b$ and $\sigma$  have smooth vertical derivatives of any order! The existing results exclude the case of a general forward system with nontrivial coefficients, and seem to be no help to study the corresponding control problems.\\

\ \ \ \ \ \ The paper focuses  smooth solution of  path-dependent PDEs. In contrast to the state-dependent case \cite{CCD}, smooth solution of equation \eqref{1} by FBSDEs meets with new issues. The first comes from the weak formulism of vertical derivatives (see identities \eqref{def-dx} and \eqref{fre} for details). Dupire's vertical derivative~\cite{D09} is defined in a finite-dimensional space, but depends on the ``cut-off'' time for functionals. In particular, to show the horizontal differentiability of the decoupling field $u$ on $[0,T]\times \mC,$ we have that for any $t,h \in [0,T]$ and $\gam_t \in \mC,$
\be
u(t+h, \gam_t)-u(t,\gam_t)= \left[u(t+h, \gam_t)- u(t+h, B^{\gam_t}_{t+h})\right]+\left[u(t+h, B^{\gam_t}_{t+h})-u(t,\gam_t)\right],
\ee
where 
$$B^{\gam_t}_{t+h}(s):= \gam(s)1_{s\le t} + (\gam(t)+ B(s)-B(t)) 1_{t+h>s \ge t} + (\gam(t)+ B(t+h)-B(t))1_{s \ge t+h},$$
 and $B$ is a Brownian motion. Then to apply It\^o's formula to compute the first difference of the right hand side of the last identity, the path ``differentiability'' of flow $u$ on $[t,t+h]$ is required. Such a differentiability is no longer the vertical derivative of $u$ since it is taken before the ``cut-off'' time $t+h$. To handle this issue, we introduce a new notation called ``strong vertical derivative'' (SVD) (see Definition \ref{svd}), built upon Dupire's vertical derivative, which restricts functionals to be vertically differentiable before the cut-off time. On one hand, the definition of SVDs is general enough to include all interesting continuously vertical differentiable functionals (see Example \ref{ex-path}). On the other hand, the SVD can be viewed as a pathwise definition for the Malliavin derivative (see e.g. \cite{N06}) on the c\`adl\`ag path space (see subsequent Remark \ref{withM} for details). Secondly, the existence of the derivative with respect to measure in Lions' sense usually requires the separability of the measurable space. However, in view of Dupire's vertical derivative and FBSDE theory, we work with the space of c\`adl\`ag functions under the uniform norm instead of Skorokhod norm. This leaves us without the general existence result for measure derivatives, and consequently we work with smooth coefficients such that we can construct derivatives via FBSDEs. Thirdly, as mentioned before, although there are many developments in viscosity solution theory for PPDEs, there is very few tool for smooth solutions even in the semi-linear case. 
 To study \eqref{1} via FBSDEs argument, we propose a novel approach involving a "coefficient frozen" strategy to handle complexities arising from path-dependent coefficients, contributing to the resolution of PPDEs with nontrivial coefficients. The argument is general enough to incorporate the mean-field path-dependent case (see Section \ref{semi} for details).\\

 \ \ \ \ \ \ A key contribution of the paper lies in establishing the well-posedness of the path-dependent mean-field equation with path-dependent coefficients, along with introducing and applying the SVD concept. In addition, the paper provides an It\^o formula and partial It\^o formula, which are fundamental in the  study of  path-dependent mean-field problems. The ``parameter frozen'' strategy proves instrumental in handling PPDEs with path-dependent coefficients. Our results  not only help to  understand path-dependnent mean-field equations, but also offer insights on numerical computations and the approximation of equilibrium in finite systems in view of the corresponding regularity needed (Fischer \cite{F14}, Lacker \cite{L14}). \\

\ \ \ \ \ \ The rest of the paper is organized as follows. In Section 2, we introduce notations of SVD with respect to paths and measures on path space, and build in the framework functional It\^o calculus incorporating paths and path measures. In Section 3, we show the  differentiability and regularity of associated FBSDE solutions. In Section 4,  we prove the existence and uniqueness of smooth solutions for path-dependent mean-field PDEs.



\section{Basic setup and It\^o calculus for functionals of both path and path-measure}

\subsection{The canonical setup}

For any fixed $T>0$, we denote by $\mC=C([0,T], \R^d) $ the canonical space and equip it with the supreme norm $\| \cdot \|_{[0,T]}.$  $W$ is the canonical process and  $\{\MF_t^W\}_{0\le t\le T}$ is the natural filtration. For any $(t,\omega)\in [0,T] \times \mC,$  ${\omega}_t$ is the cut-off path, meaning that $\ome_t\in \mC$ such that
\be
\omega_t(r)=\omega(r)1_{[0,t)}(r)+ \omega(t)1_{[t,T]}(r),\ \ r\in[0,T];
\ee
and $\ome(t)$ is the state of $\omega$ at time $t$. Let $\op^C_2$ be the set of probability measures on $(\mC, \MF_T^W)$ with finite second order moments, i.e. $\mu \in \op_2^C $ iff $|||\mu|||^2:=\E^\mu[\|W\|_{[0,T]}^2] < \infty.$  For $\mu \in \op^C_2, $  $\mu_t \in \op^C_2$ is the distribution of stopped process $W_t$ under $\mu.$ For any $\mu, \nu \in \op^C_2,$ we define the following classical 2-Wasserstein distance
\be
W_2(\mu, \nu)= \inf_{P\in \op(\mu, \nu)} \left(\int_{\mC \times \mC} \|u-v \|_{[0,T]}^2 \ dP(u, v) \right)^\frac12,
\ee
where $\op(\mu, \nu)$ is the set of all probability measures on $(\mC \times \mC, \MF^W_T \times \MF^W_T)$ with marginal measures $\mu$ and $\nu.$
To introduce functional derivative in the space of paths, we consider the space of c\`adl\`ag paths $\D:=D([0,T], \R^d),$ which can be equipped with the uniform topology $\|\cdot \|_{[0,T]},$ or the Skorohod topology
\be\label{skr}
d(\omega, \ome'):= \inf_{\lambda \in \Lambda_{[0,T]}} \sup_{t\in [0,T]} (|t-\lambda(t)| + |\ome(t)-\ome'(t)|),
\ee
where $\Lambda_{[0,T]}$ is the set of all strictly increasing continuous mappings on $[0,T]$ with $\lambda(0)=0$ and $\lambda(T)=T.$ In the following, we equip $\D$ with the uniform topology unless stated otherwise.
With the space $\mC$ being replaced with $\D$, notations such as $\op^D_2$ and $W_2(\mu, \nu)$ are self-explained.

\ \ \ \ \ \ Suppose that $(\Omega,\MF, P )$ is an atomless probability space supporting a $d$-dimensional Brownian motion $B,$ and $\{\MF_t\}_{t\in[0,T]}$ is the natural augmented filtration. For any $t\in[0,T]$ and $r\in [t,T],$ we define $\MF_r^t$ as the $\sigma$-algebra generated by $\{B(s)-B(t); t\le s \le r\}$ and completed under $P$.
For any (stopped up to time $t$) process $X_t,$ we denote by $\ML_{X_t}$ the law of the process $X_t$ and $\ML_{X(t)}$ the law of the random variable $X(t)$. In the following, we use notation $\MM_2^C$ ($\MM_2^D$, resp.) as the collection of measurable continuous processes (c\`adl\`ag processes, resp.) with laws in $\op_2^C$ ($\op_2^D$, resp.). Since for any $\mu \in \op_2^D,$ we can always find an atomless probability space $(\Omega,\MF_t, P )$ such that there exists a c\`adl\`ag process $\eta$ on this probability space with law $\mu,$ we will always suppose for any $\mu \in \op_2^D,$ $(\Omega,\MF, P )$ is rich enough to support a c\`adl\`ag process $\eta$ such that $\ML_{\eta}=\mu.$ Moreover, for any progressively measurable process $X$ and random variable $\xi$ on $(\Omega,\MF, P )$, we define the following norms if they are finite: for any $ t\in[0,T],$ $p\in \mathbb{N^+},$
\be
\|X\|_{\bS^p,[t,T]}^p:= \E^P[\|X\|^p_{[t,T]}],\ \ \|X\|_{\bH^p, [t,T]}^p:= \E^P[ (\int_t^T|X(r)|^2 dr)^{\frac p 2} ],\ \ \|\xi\|_{L^p}^p:= \E^P[|\xi|^p].
\ee
We write $\bS^p([t,T],\R^k)$, $\bH^p([t,T],\R^k)$ and $L^p(\MF_T,\R^k)$ for spaces of progressively measurable processes on $[t,T]$ and random variables with values in $\R^k$ and finite corresponding norms. Denote by $C^n(\R^m, \R^k)$ ($C^n_b(\R^m, \R^k)$, resp.) the space of (bounded, resp.) continuous functions from $\R^m$ to $\R^k$ with (bounded, resp.) continuous derivatives up to order $n.$ Usually, we omit $\R^k$ in $\bS^p([t,T],\R^k), \bH^p([t,T],\R^k), L^p(\MF_T,\R^k), C(\R^m, \R^k)$ when $k=1,$ and also omit the time interval $[t,T]$ if no confusion raised. Moreover, for $(Y,Z) \in \bS^p([t,T], \R^m) \times \bH^p([t,T], \R^n),$ we write
\be
\|(Y,Z)\|_{\bS^p\times \bH^p} := \left( \|Y \|_{\bS^p}^p +\|Z\|_{\bH^p}^p \right)^{\frac 1 p}.
\ee


\subsection{Strong vertical derivatives with respect to  path and path-measure}
Denote by $\hD$ the product space $[0,T] \times \D \times \op^D_2$ and by $\mathscr{D}$ the space of functionals on $\hD.$
A functional $f\in \FD $ is said to be {\it non-anticipative} if for any $(t, \ome, \mu),$ $f(t, \ome, \mu)=f(t, \ome_t, \mu_t)$, where $\mu_t$ is the law of $\eta_t$ with $\ML_\eta=\mu$. For non-anticipative $f\in \FD,$ we call $f$ continuous on $\hD$ and write $f\in \FC(\hD)$ if $f$ is continuous in the product space $[0,T] \times \D \times \op^D_2 $ equipped with the premetric:
\be\label{dp}
d_{p}((t,\ome,\mu),(t',\ome',\mu')) :=|t-t'|+\|\ome_t-\ome_{t'}\|+ W_2(\mu_t,\mu_{t'}).
\ee

For any non-anticipative $f  \in \FD, $ the horizontal derivative is defined as
\be\label{def-dt}
\partial_t f(t, \ome ,\mu  ):= \lim_{h\rightarrow 0^+} \frac{1}{h}[f(t+h, \ome_t,\mu_t )-f(t, \ome_t,\mu_t )], \ \forall \ (t,\ome,\mu) \in \hD.
\ee
For any $(t,x) \in [0,T] \times \R^d,$ define $\ome^{t,x}  \in \D$ by
\be
\ome^{t,x} := \ome  + x 1_{[t,T]}.
\ee
For any fixed $(t,\mu) \in [0,T] \times \op_2^D,$ $f(t, \cdot, \mu):   \D \mapsto \R$
is called {\it vertically differentiable} at $(t,\ome )$ (or $\ome_t$ for short), if $f(t, \ome^{t,x}, \mu)$ is differentiable at $x=0$, i.e. there exits $\partial_{\ome} f(t,\ome,\mu ) \in \R^d$ such that
\be\label{def-dx}
f(t, \ome+x 1_{[t,T]},\mu )= f(t, \ome ,\mu ) + \partial_{\ome} f(t,\ome,\mu ) x + o(|x|),\ \ \ \forall \ x\in \R^d,
\ee
and $\partial_{\ome} f(t,\ome,\mu )$ is then called the vertical derivative.
Now we introduce the notation of SVDs for the FBSDE argument in Section 4.

\begin{defi}\label{svd}
Suppose that $f  :[0,T] \times \D   \mapsto \R.$ For any $\tau \le t,$ we call $f$ strongly vertically differentiable at $(\tau,t, \ome)$ (or $\ome_\tau$ for short), if there exits $\partial_{\ome_\tau} f(t,\ome  ) \in \R^d$ such that
\be
f(t, \ome+x 1_{[\tau,T]}  )= f(t, \ome   ) + \partial_{\ome_\tau} f(t,\ome  )  x + o(|x|),\ \ \  \forall \ x\in \R^d.
\ee
In this case, $\partial_{\ome_\tau} f(t,\ome  )$ is called the strong vertical derivative (SVD) of $f$ at $(\tau,t,\ome)$. Moreover, if $f$ is strongly vertically differentiable at $(\tau,t,\ome)$ for any $\tau \le t,$ we call $f$ \it{strongly vertically differentiable} at $(t,\ome)$  (or $\ome_t$ for short).

\end{defi}

\begin{rem}\label{fulltime-SVD}
	Indeed, we can consider the SVD for any $\tau \in [0,T].$ For non-anticipating functionals we care about in this paper, we have 
	$$
	\partial_{\ome_\tau}f(t,\ome)= \partial_{\ome_\tau}f(t,\ome)1_{[0,t]}(\tau).
	$$ 
\end{rem}

\ \ \ \ \ \ Clearly, $f$ is strongly vertical differentiable at $\ome_t$ if and only if the mapping $x \mapsto f(t, \ome^{\tau,x})$ is differentiable at $x=0$ for any $\tau \le t$. In particular, if $f$ is non-anticipative and strongly vertically differentiable, $f$ is vertically differentiable and its vertical derivative at $(t,x)$ agrees with its strong vertical derivative at $(t,t,\ome).$ For the SVD $\partial_{\ome_\tau}f(t,\ome ),$ we can further define its SVDs in the same spirit: for any $\tau' \le t,$ define $\partial_{\ome_{\tau'}}\partial_{\ome_\tau}f(t,\ome )$ as the SVD of $\partial_{\ome_\tau}f(t,\ome )$ at $(\tau',t,\ome)$. In the following, we only need to consider the case $\tau'=\tau.$ We call $f$ has continuous SVDs or $\partial_{\ome_\tau}f(t,\ome )$ is continuous if $\partial_{\ome_\tau} f$ is continuous with respect to the metric: for any $(\tau,t,\ome )$ and $(\tau', t',\ome' )$ with $\tau \le t,\ \tau' \le t',$
\be
d_{sp}((\tau,t,\ome ), (\tau',t',\ome' )):=|\tau-\tau'|+|t-t'|+\|\ome_t-\ome'_{t'}\| .
\ee
Here are examples of strongly vertically differentiable functionals.

\begin{example}\label{ex-path}
Let $f:[0,T]\times \D \longmapsto \R$ and $(t,\ome) \in [0,T]\times \D.$

\begin{itemize}
\item[$(i)$]If $f(t,\ome) =F(t,\ome(t))$ for a function $F \in C^{1,k}([0,T]\times \R^d)$,
    then we have that for any $\tau_1,\tau_2,\cdots,\tau_j \in [0,t],$ $j\le k,$
\be
\partial_t f(t,\ome)= \partial_t F(t, \ome(t)), \quad \partial_{\ome_{\tau_j}} \cdots \partial_{\ome_{\tau_1}} f(t, \ome )= D_x^{  j} F(t,\ome(t)),
\ee
and thus $f$ has continuous strong vertical derivatives up to order $k$.

\item[$(ii)$] Suppose that
    $
    f(t,\ome) = \int_0^t F(r,\ome(r)) dr
    $
    with $F \in C^{1,k}([0,T]\times \R^d)$. Then for any $\tau_1,\tau_2,\cdots,\tau_j \in [0,t],$ $j\le k,$
\be
\partial_t f(t,\ome)=   F(t, \ome(t)), \ \ \ \partial_{\ome_{\tau_j}} \cdots \partial_{\ome_{\tau_1}} f(t, \ome )= \int_\tau^t D_x^{ j} F(r,\ome(r))dr,
\ee
with $\tau=\max_{1\le i\le j }\{\tau_i\}.$
Thus $f$ has continuous SVDs up to order $k$.

\item[$(iii)$]For a partition $0=t_0 < t_1 <\cdots <t_n=T ,$ and a continuously differentiable function
    $
    F : \underbrace{\R^d \times \R^d \times \cdots \R^d    }_{n}   \mapsto \R
    $, let
    \be
    f(T,\ome ):=F(\ome(t_1), \ome(t_2)-\ome(t_1),\cdots, \ome(T)-\ome(t_{n-1}) ).
    \ee
Then $f$ is strongly vertically differentiable at $(T,\ome)$: for $t>0,$
\be\nonumber
\partial_{\ome_t}f(T,\ome )=\sum_{j=1}^{n} \partial_{x_{j}} F(\ome(t_1), \ome(t_2)-\ome(t_1),\cdots, \ome(T)-\ome(t_{n-1}) )1_{(t_{j-1},t_j]}(t).
\ee

\item[$(iv)$]For fixed $t_0\in (0,T)$ and $F \in C^1(\R^d)$,
    define $
    f(T,\ome):= F( \ome(  t_0)).
    $
Thus $f$ has SVDs
\be
\partial_{\ome_{t}} f(T, \ome )=D_x F(\ome(t_0)) 1_{[0,t_0]}(t).
\ee
%

\item[$(v)$]For a given partition of $[0,T]:$ $0=t_0 < t_1 <\cdots <t_n=T $ and smooth functions $\{f_i\}_{i=0}^{n-1}$ on $\R^d,$ consider
\be
f(t,\ome):= \sum_{i=0}^{n-1} f_i(\ome(t_i)) 1_{[t_i,t_{i+1})} (t).
\ee
Then $f$ is strongly vertically differentiable at $\ome_t$ with
\be
\partial_{\ome_\tau} f(t,\ome)= \sum_{i=0}^{n-1} D f_i(\ome(t_i)) 1_{[t_i,t_{i+1})} (t) 1_{[0,t_i]} (\tau), \ \ \forall \tau \le t.
\ee

\end{itemize}

\end{example}

\begin{rem}\label{withM}
The relation between vertical derivative and Malliavin derivative is considered in \cite{CF13}, where an equivalence is built through martingale representation in both frameworks (see \cite[Theorem 6.1]{CF13}). According to $(iii)$ of Example \ref{ex-path}, the SVD is related to Malliavin derivatives restricted in the cylinder random variables or processes. 

\end{rem}

\ \ \ \ \ \ The following lemma follows immediately from Definition \ref{svd},  and  will be frequently used.

\begin{lem}\label{lipbd}
Suppose that $f:[0,T] \times \D \mapsto \R$ is strongly vertically differentiable, and uniformly Lipschitz continuous in $\ome:$
 \be
 |f(t,\ome)-f(t,\ome')|\le C \| \ome_t-\ome'_{t} \|, \quad \forall  (t,\ome, \ome') \in [0,T]\times \D\times \D.
 \ee
Then we have
$
|\partial_{\ome_\tau} f(t,\ome)| \le C
$
for any $(t,\ome)\in [0,T] \times \D$ and $\tau\le t.$
\end{lem}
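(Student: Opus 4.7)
The plan is to read off the bound directly from the first-order expansion in Definition~\ref{svd} by testing it against the Lipschitz hypothesis. Fix $(t,\ome)\in[0,T]\times\D$ and $\tau\le t$. For any $x\in\R^d$, the SVD expansion gives
\[
f(t,\ome+x 1_{[\tau,T]}) - f(t,\ome) = \partial_{\ome_\tau} f(t,\ome)\,x + o(|x|),
\]
so the task reduces to showing that the left-hand side is bounded by $C|x|+o(|x|)$.

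The only observation required is that, thanks to $\tau\le t$, applying the stopping operation $(\cdot)_t$ to the perturbed path $\ome+x 1_{[\tau,T]}$ and to $\ome$ preserves the size of the perturbation. Using the convention $\ome_t(r)=\ome(r)1_{[0,t)}(r)+\ome(t)1_{[t,T]}(r)$ and the fact that $1_{[\tau,T]}(t)=1$, a direct computation gives
\[
(\ome+x1_{[\tau,T]})_t(r)-\ome_t(r)= x\bigl(1_{[\tau,t)}(r)+1_{[t,T]}(r)\bigr) = x\,1_{[\tau,T]}(r),
\]
so $\|(\ome+x1_{[\tau,T]})_t-\ome_t\|=|x|$. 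Plugging this into the Lipschitz hypothesis yields $|f(t,\ome+x 1_{[\tau,T]})-f(t,\ome)|\le C|x|$.

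Combining the two displays gives $|\partial_{\ome_\tau} f(t,\ome)\cdot x|\le C|x|+o(|x|)$ for every $x\in\R^d$. Specializing to $x=\lambda\,\partial_{\ome_\tau} f(t,\ome)$ with $\lambda\downarrow 0^+$ (or simply testing along each coordinate direction), dividing by $\lambda|\partial_{\ome_\tau} f(t,\ome)|$, and passing to the limit produces $|\partial_{\ome_\tau} f(t,\ome)|\le C$, which is the claim. There is essentially no serious obstacle: the only step requiring any care is the identity $\|(\ome+x1_{[\tau,T]})_t-\ome_t\|=|x|$, which is precisely where the hypothesis $\tau\le t$ is used and which relies on the specific cut-off convention. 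Notably, no continuity or further regularity of the SVD is needed for this bound.
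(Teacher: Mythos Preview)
Your proof is correct and is exactly the direct argument the paper has in mind: the paper simply states that the lemma ``follows from Definition~\ref{svd} directly'' without giving further details, and your proposal spells out precisely that computation. The only step requiring care, namely the identity $\|(\ome+x1_{[\tau,T]})_t-\ome_t\|=|x|$ using $\tau\le t$ and the cut-off convention, is handled correctly.
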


\ \ \ \ \ \ For a non-anticipative functional $f\in \FD$, consider its {\it lift}
$\Bf: [0,T] \times \D \times \MM^D_{2} \mapsto \R,$
\be
\Bf(t, \ome, \eta):=f(t, \ome, \ML_{\eta}).
\ee
In the spirit of Lions \cite{L} (also see \cite{WZ20} for derivative with respect to measure on the path space), we call $f$ Fr\'echet (vertically) differentiable at $(t,\mu)$ (or $\mu_t$ for short), if for any fixed $\ome,$ $\Bf$ is Fr\'echet (vertically) differentiable at $(t,\eta)$ (or $\eta_t$ for short) with $\ML_\eta=\mu$ in the following sense: there exits $D_{\eta}\Bf(t, \ome, \eta) \in L^2_P(\MF_t , \R^d)$ such that for any $\xi \in L^2_P(\MF_t, \R^d),$
\be\label{fre}
\Bf(t, \ome, \eta+\xi 1_{[t,T]})= \Bf(t, \ome, \eta ) + \E^P[D_{\eta}\Bf(t, \ome, \eta)  \xi] + o(\|\xi \|_{L^2}).
\ee
In particular, it means that the following G\^ateaux derivative exits
\be\label{gat}
 \lim_{h\rightarrow 0} \frac{1}{h}[\Bf(t, \ome, \eta+ h \xi 1_{[t,T]}) - \Bf(t, \ome, \eta )]= \E^P[D_\eta\Bf(t, \ome, \eta)\xi].
\ee
Moreover, if there exists a non-anticipative jointly measurable functional $\partial_{\mu}f:\hD \times \D \mapsto \R,$ such that
\be\label{rep-dmu}
D_\eta \Bf(t,\ome, \eta)= \partial_{\mu}f(t,\ome,\mu,\eta), \ \ \ P\text{-}a.s.,
\ee
we call $f$ vertically differentiable at $(t,\mu)$ and $\partial_{\mu}f(t,\ome,\mu,\tome)$ the vertical derivative of $f(t,\ome, \cdot)$ at $(t,\mu)$ (or $\mu_t$).


\begin{rem}\label{fregat} Consider the validity for notations of Fr\'echet and G\^ateaux differentiability. Denote by $\Bf$ the lift of $f\in \FD.$ For any $\xi \in L^2_P(\MF_t , \R^d),$
let $F(t, \ome, \eta, \xi):= \Bf(t, \ome, \eta+\xi 1_{[t,T]}).$ Then $\Bf$ is Fr\'echet differentiable at $(t,\eta)$ in the above sense is equivalent to that $F(t, \ome, \eta, \xi)$ is Fr\'echet differentiable at $\xi=0$ in the classical sense. Similar argument for G\^ateaux differentiability also holds.

\end{rem}

\begin{rem} \label{exi-dmu}
Consider the existence of the derivative functional $\partial_{\mu}f $. If the lift $\Bf(t,\ome,\eta)$ of $f(t,\ome,\mu)$ is Fr\'echet differentiable at $\eta_t,$ and the derivative $D_\eta \Bf(t,\ome, \eta)$ is continuous in the sense that $D_\eta \Bf(t,\ome,\eta^n) \stackrel{L^2}{\longrightarrow} D_\eta \Bf(t,\ome,\eta)$ as $\eta^n \stackrel{L^2}{\longrightarrow} \eta$ under the Skorohod topology \eqref{skr}, then according to \cite[Theorem 2.2]{WZ20}, $\partial_\mu f$ exists in the sense of \eqref{rep-dmu}. However, to build smooth solutions for \eqref{1}, we need our It\^o formula (Theorem \ref{itoformula} and Corollary \ref{sito}) to be applicable for the larger class of functionals, which only need to be continuous with respect to the uniform topology. Luckily, we can construct the derivative directly by corresponding FBSDEs.

\end{rem}

\ \ \ \ \ \ \ For the uniqueness of $\partial_\mu f(t,\ome,\mu,\cdot),$ in view of identity \eqref{rep-dmu}, we see that it is unique $\mu$-a.s. in $\D$. Then for any $\mu  \in \op^D_2$ such that $\text{supp}(\mu)=\D,$ if $\partial_\mu f(t,\ome,\mu,\tome)$ is continuous in $ \tome \in \D,$ $\partial_\mu f(t,\ome,\mu,\cdot)$ is unique on $\D.$ Moreover, suppose that $\partial_\mu f(t,\ome,\cdot,\cdot)$ is jointly continuous on $  \op_2^D \times \D.$ Then for any $\mu_0\in \op_2^D,$ $\partial_{\mu} f(t,\ome,\mu_0, \cdot)$ is unique on $\D$. Indeed, choose any $\eta \in \MM^D_2$ with $\ML_{\eta}=\mu_0 \in \op_2^D,$ and any $\eta'\in (\MM^D_2)',$ which is independent of $\eta,$ such that $\text{supp}(\ML_{\eta'})=\D.$ Then for any $\vep>0, $ the functional $\partial_\mu f(t,\ome,\ML_{\eta+\vep \eta'},\cdot)$ is unique on $\D.$ It follows from continuity of $\partial_\mu f(t,\ome,\cdot,\cdot)$ that $\partial_\mu f(t,\ome,\mu_0,\tome)$ is unique as the limit of $\partial_\mu f(t,\ome,\ML_{\eta+\vep \eta'},\tome)$ as $\vep$ goes to zero. In conclusion, we have the following lemma.

\begin{lem} \label{uni-dmu}
Suppose that for any fixed $(t,\ome)\in[0,T] \times \D,$ the functional derivative $\partial_{\mu}f(t,\ome,\cdot,\cdot)$ is jointly continuous in $  \op_2^D \times \D.$ Then for any $(t,\ome,\mu) \in \hD,$ $\partial_{\mu}f(t,\ome, \mu ,\cdot) $ is unique on $\D$.

\end{lem}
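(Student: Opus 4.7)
The plan is to formalize the three-step argument sketched in the paragraph immediately preceding the lemma. The key insight is that uniqueness on all of $\D$ combines two ingredients: the $P$-almost-sure (equivalently $\mu$-a.s.) uniqueness supplied by the representation \eqref{rep-dmu}, and the joint continuity hypothesis that upgrades a.s.\ uniqueness to pointwise uniqueness through a density argument in the measure variable.

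First, I would treat the case of a measure $\mu \in \op_2^D$ with full support, $\text{supp}(\mu) = \D$. Picking any representer $\eta\in\MM_2^D$ with $\ML_\eta = \mu$, the defining identity \eqref{rep-dmu} forces any two candidate derivative functionals to coincide $P$-a.s. at $\eta$, hence on a set of full $\mu$-measure in $\D$. Combined with the continuity of $\tome \mapsto \partial_\mu f(t,\ome,\mu,\tome)$ obtained by restricting the joint continuity hypothesis to the $\mu$-slice, and with the density of this full-measure set in $\D$ (which is exactly the full-support assumption), the two candidates must agree everywhere on $\D$.

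Second, for a general $\mu_0 \in \op_2^D$, I would approximate $\mu_0$ by full-support measures. Choose $\eta$ with $\ML_\eta = \mu_0$, and, on a possibly enlarged atomless space, an independent auxiliary process $\eta'\in \MM_2^D$ whose law has full support in $\D$. For each $\vep>0$ set $\mu_\vep := \ML_{\eta+\vep\eta'}$; the convolution structure guarantees that $\mathrm{supp}(\mu_\vep)=\D$, so by the first step $\partial_\mu f(t,\ome,\mu_\vep,\cdot)$ is uniquely determined pointwise on $\D$. Invoking now the joint continuity of $\partial_\mu f(t,\ome,\cdot,\cdot)$ and the fact that $\eta+\vep\eta'\to \eta$ in $\bS^2$ forces $W_2(\mu_\vep,\mu_0)\to 0$, I would pass to the limit $\vep\to 0^+$: for every fixed $\tome\in\D$, $\partial_\mu f(t,\ome,\mu_\vep,\tome)\to\partial_\mu f(t,\ome,\mu_0,\tome)$, and since each term in the sequence is unique, so is the limit.

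The only delicate point I anticipate is the existence of an auxiliary process $\eta'\in\MM_2^D$ whose law has full topological support in $(\D,\|\cdot\|_{[0,T]})$; under the uniform norm $\D$ is not separable, so this requires either interpreting $\mathrm{supp}$ relative to the Skorohod topology \eqref{skr} (which is consistent with the measurable structure $\MF_T^W$), or exhibiting an explicit construction such as a compound-Poisson-type perturbation whose realizations are dense in $\D$ for the sup norm. Once this technical point is pinned down, the remaining argument is a clean density-plus-continuity argument, and no PDE or stochastic analysis beyond Definition \ref{svd} and the representation \eqref{rep-dmu} is needed.
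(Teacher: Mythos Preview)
Your proposal is correct and follows essentially the same approach as the paper, which sketches this exact three-step argument (full-support case via a.s.\ uniqueness plus continuity, then approximation by $\ML_{\eta+\vep\eta'}$, then passage to the limit via joint continuity) in the paragraph immediately preceding the lemma. Your flagging of the full-support auxiliary process $\eta'$ as the one delicate point is apt; the paper simply assumes such an $\eta'$ exists without further comment.
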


\begin{rem} \label{d-ext}
The definition of vertical derivative given by \eqref{fre} and \eqref{gat} has natural extension for Banach space valued functionals. For any $t\in[0,T],$ suppose that $f(t, \ome , \mu) $ takes values in a (stochastic) Banach space $E_t$ (e.g. $\bS^2({[t,T]}), \bH^2({[t,T]}), L^2(\MF_t)$). Indeed, $f(t,\ome,\mu)$ has the natural lift $\Bf(t, \ome, \eta) \in E_t$ with $\ML_{\eta}=\mu$. If the mapping from $L^2(\MF_t)$ to $E_t$
$$
\begin{array}{lccl}
  \Bf(t, \ome, \eta+\cdot 1_{[t,T]}):& L^2(\MF_t)&  \longmapsto & E_t\\
& \xi && \Bf(t, \ome, \eta+\xi 1_{[t,T]})
\end{array}
$$
is Fr\'echet (vertical) differentiable with derivative $D_{\eta} \Bf(t, \ome, \eta) \in L(L^2(\MF_t),E_t)$ at $\xi=0,$ we call $f(t, \ome , \cdot)$ Fr\'echet (vertically) differentiable at $\mu_t$. Moreover, if there exists a jointly measurable functional $U: \hD \times \D \mapsto E_t$ such that for any $\xi \in L^2(\MF_t)$,
$
D_{\eta} \Bf(t, \ome, \eta) ( \xi) = \E^{P} [U(t, \ome, \mu, \eta) \xi ] ,
$
we call $\partial_{\mu} f(t, \ome, \mu, \cdot):= U(t, \ome, \mu, \cdot)$ the vertical derivative of $f(t,\ome,\cdot)$ at $\mu_t.$

\end{rem}

\ \ \ \ \ \ Now we introduce SVDs with respect to path-measure.

\begin{defi}\label{svdm}
For any $\tau,t\in [0,T]$ with $\tau \le t$ and $\mu \in \op_2^D,$
we call a non-anticipative functional $f:[0,T]\times \op_2^D \mapsto \R$ Fr\'echet (strongly vertically) differentiable at $(\tau,t,\mu)$ if its lift $\Bf(t,  \eta)$ with $\ML_\eta=\mu$ is Fr\'echet (strongly vertically) differentiable: there exits $D_{\eta_\tau}\Bf(t,   \eta) \in L^2_P(\MF_t , \R^d)$ such that for any $\xi \in L^2_P(\MF_\tau, \R^d),$
\be\label{sfre}
\Bf(t,   \eta+\xi 1_{[\tau,T]})= \Bf(t,  \eta ) + \E^P[D_{\eta_\tau}\Bf(t,   \eta)   \xi] + o(\|\xi \|_{L^2}).
\ee
In particular, it means that the following G\^ateaux derivative exits,
\be\label{sgat}
\lim_{h\rightarrow 0} \frac{1}{h} [\Bf(t,   \eta+ h \xi 1_{[\tau,T]}) - \Bf(t,   \eta )]= \E^P[D_{\eta_\tau}\Bf(t,   \eta)\xi].
\ee
We call $f$ strongly vertically differentiable at $(t,\mu)$ or $\mu_t,$ if it is Fr\'echet differentiable at $(\tau,t,\mu)$ for any $\tau \le t $, and moreover, there exists a jointly measurable non-anticipative functional $\partial_{\mu_\tau} f: [0,T] \times \op_2^D \times \D \mapsto \R^d $ such that
\be
D_{\eta_\tau}\Bf(t,   \eta)= \partial_{\mu_\tau} f(t, \mu, \eta),\ \ \ P\text{-}a.s..
\ee
$\partial_{\mu_\tau} f(t, \mu, \cdot)$ is then called the strong vertical derivative of $f(t, \cdot)$ at $(\tau,t,\mu).$

\end{defi}

\begin{rem}\label{ue-sdmu}
For the existence and uniqueness of the SVD at $\mu_\tau,$ we have similar results as Remark \ref{exi-dmu} and Lemma \ref{uni-dmu}. In particular, if for any $t \in[0,T] ,$ $\partial_{\mu_\tau}f(t, \cdot,\cdot)$ is jointly continuous on $\op_2^D \times \D$, then the SVD is unique. Moreover, we can extend SVDs in path-measure to the (stochastic) Banach framework as Remark \ref{d-ext}.

\end{rem}

\ \ \ \   Given strongly vertically differentiable $f:[0,T] \times \op_2^D \mapsto \R$, for any $(t, \mu,\tome)\in[0,T] \times \op_2^D \times \D$ and $\tau \le t,$ we can further consider SVDs of $\partial_{\mu_\tau}f $ with respect to $  \mu_t $ and $ \tome_t$: for any $\tau' \le t,$ consider $\partial_{\tome_{\tau'}}\partial_{\mu_\tau}f(t, \mu,\tome)$ as the SVD of $\partial_{\mu_\tau}f(t, \mu,\tome)$ at $(\tau',t,\tome)$; $\partial_{\mu_{\tau'}}\partial_{\mu_\tau}f(t, \mu,\tome,\tome')$ as the SVD of $\partial_{\mu_\tau}f(t, \mu,\tome)$ at $(\tau',t,\mu).$ In the subsequent sections, we only need to consider the case $\tau'=\tau$ and the second order derivative $\partial_{\tome_{\tau'}}\partial_{\mu_\tau}f(t, \mu,\tome)$. Moreover, we call $f$ has continuous SVDs or $\partial_{\mu_\tau}f(t,\mu, \tome)$ is continuous if $\partial_{\mu_\tau} f$ is continuous with respect to the following premetric: for any $(t, \mu, \tome)$ and $(t', \mu', \tome')$ with $\tau \le t,\tau' \le t',$
\be
d_{sp}((\tau,t,\mu,\tome), (\tau',t',\mu',\tome')):=|\tau-\tau'|+|t-t'|+  W_2(\mu_t,\mu'_{t'})+\|\tome_t-\tome'_{t'}\|.
\ee
$f$ is said to have continuous SVDs in path-measure up to order $2$, if both $ \partial_{\mu_\tau}f$ and $\partial_{\tome_\tau} \partial_{\mu_\tau}f$ are continuous with respect to the above topology.

\begin{example}\label{ex-meas}
Here we consider $f:[0,T]\times \op_2^D \mapsto \R$ and $(t,\mu) \in [0,T]\times \op_2^D.$

\begin{itemize}
\item[$(i)$]Suppose that $F\in C^{1,2}([0,T]\times \R^d)$ with  $|D_x^2 F|$ being uniformly bounded, and
    $
    f(t,\mu):= \E^{\mu}[F(t, W(t))].
    $
    Then we have that
\beaa
&&\partial_t f(t,\mu)= \E^{\mu}[\partial_t F(t, W(t))], \ \ \ \partial_{ {\mu_\tau}} f(t, \mu,\tome )= D_x F(t,\tome(t)),\\
&&\quad \text{and}\quad \partial_{\tome_\tau}\partial_{ {\mu_\tau}} f(t, \mu,\tome )= D_x^2 F(t,\tome(t)),  \  \ \  \forall \tau \in [0,t].
\eeaa
Thus $f$ has continuous SVDs up to order $2$.

\item[$(ii)$]Let $F$ as defined in $(i)$ and $
    f(t,\mu):=\E^{\mu}[ \int_0^t F(r,W(r)) dr ].
    $
    Then for any $\tau \in [0,t],$
\beaa
&&\partial_t f(t,\mu)=  \E^{\mu}[  F(t, W(t))], \ \ \ \partial_{\mu_{\tau}} f(t, \mu, \tome )= \int_\tau^t D_x F(r,\tome(r))dr,\\
&&\quad \text{and}\quad \partial_{\tome_\tau}\partial_{\mu_{\tau}} f(t, \mu, \tome )=\int_\tau^t D^2_x F(r,\tome(r))dr.
\eeaa
Therefore, the functional $f$ also has continuous SVDs up to order $2$.

\item[$(iii)$]Let $F  \in C^1(\R^d)$ such that $|DF(x)| \le C(1+|x|)$ for some $C\ge 0.$ For fixed $t_0\in (0,T),$ consider
    $
    \Phi(T,\mu):= \E^\mu [ F( W(  t_0))] .
    $
      Then the SVD at $\mu_t$ is
$
\partial_{\mu_{t}} \Phi(T, \mu, \tome ):= D_x F(\tome(t_0)) 1_{[0,t_0]}(t).
$

\end{itemize}

\end{example}

\begin{example}
We consider non-anticipative functionals $f\in \FD$ by combining Example \ref{ex-path} and Example \ref{ex-meas}. For simplicity take $d=1$. Suppose that $F \in C^{1,2}_b([0,T] \times \R^5)$ and
$f_1,f_2,f_3,f_5 \in C^2_b(\R)$. $f_4 \in C^2_b(\R^2)$. Consider the following functional
\beaa
&f(t,\ome,\mu):= F\Big(t,\ome(t),\int_0^t f_1(\ome(r))dr , \E^\mu[f_2(W(t))], \E^\mu[\int_0^t f_3(W(r))dr],\\
 &\quad \quad \quad \quad E^\mu[f_4(W(t), \int_0^t f_5(W(r))dr  )] \Big), \quad \forall\  (t,\ome,\mu) \in \hD.
\eeaa
Then we check that $f$ has continuous horizontal derivatives and twice continuous SVDs in $ \ome_t$ and $ \mu_t.$ Indeed, for any $\tau \le t,$
\begin{equation*}
\begin{split}
&\partial_t f(t,\ome,\mu) = \partial_t F(t,x)+ \partial_{x_2}F(t,x) f_1(\ome(t))+ \partial_{x_4}F(t,x) \E^{\mu}\left[f_3(W(t))\right]\\
& \ \ \ \ \quad   \ \ \ \ \quad  \quad   + \partial_{x_5}F(t,x)\E^{\mu}\left[\partial_{y_2}f_4(Y)f_5(W(t))\right],\\
& \partial_{\ome_\tau}f(t,\ome,\mu)= \partial_{x_1}F(t,x) + \partial_{x_2} F(t,x) \int_\tau^t f_1'(\ome(r)) dr,\\
& \partial^2_{\ome_\tau}f(t,\ome,\mu)= \partial^2_{x_1}F(t,x) + \partial_{x_2}^2 F(t,x) \left(\int_\tau^t f_1'(\ome(r))  dr\right)^2  +\partial_{x_2} F(t,x) \int_\tau^t f_1^{(2)}(\ome(r)) dr,\\
& \partial_{\mu_\tau} f(t,\ome,\mu, \tome)= \partial_{x_3} F(t,x) f'_2(\tome(t)) + \partial_{x_4} F(t,x) \int_\tau^t f_3'(\tome(r))dr\\
& \ \ \ \ \quad \ \  \ \quad  \ \ \ \quad  \quad + \partial_{x_5} F(t,x) \Big[ \partial_{y_1}f_4 ( \ty) + \partial_{y_2} f_4 ( \ty ) \int_{\tau}^t f_5'(\tome(r))dr \Big],\quad \text{and}\\
& \partial_{\tome_\tau}\partial_{\mu_\tau} f(t,\ome,\mu, \tome) =  \partial_{x_3} F(t,x) f^{(2)}_2(\tome(t)) + \partial_{x_4} F(t,x) \int_\tau^t f_3^{(2)}(\tome(r))dr + \partial_{x_5} F(t,x) \Big[ \partial^2_{y_1}f_4 ( \ty ), \\
& \quad \ \ \quad \quad \ \quad  \quad \ \ \ \quad  \quad + 2 \partial_{y_2}\partial_{y_1} f_4 ( \ty ) \int_{\tau}^t f_5'(\tome(r))dr +\partial_{y_2}^2 f_4 ( \ty ) (\int_{\tau}^t f_5'(\tome(r))dr)^2 \Big],
\end{split}
\end{equation*}
where
\begin{align*}
(t,x)&=\left(t,\ome(t),\int_0^t f_1(\ome(r))dr , \E^\mu[f_2(W(t))], \E^\mu \Big[\int_0^t f_3(W(r))dr\Big],\
 \E^\mu\Big[f_4(W(t), \int_0^t f_5(W(r))dr  )\Big] \right),\\
  Y&= \left(W(t), \int_0^t f_5(W(r))dr  \right),\ \ \text{and} \quad
 \ty= \left( \tome(t),\int_0^t f_5(\tome(r))dr \right).
\end{align*}


\end{example}

\ \ \ \ \ \ In the following, for any $f\in \FD,$ we use generic notations $(\partial_{\ome}f, \partial^2_\ome f)$ (($\partial_{\ome_\tau}f, \partial_{\ome_\tau}^2 f)$, resp.) to denote the vertical derivative (SVD, resp.) in path, and $( \partial_{\mu}f, \partial_{\tome} \partial_{\mu}f)$ ($(\partial_{\mu_\tau}f, \partial_{\tome_\tau} \partial_{\mu_\tau}f)$, resp.) to denote the vertical derivative (SVD, resp.) in measure if there is no confusion. For product spaces $\hD \times \D$, $[0,T] \times \hD$ and $ [0,T]   \times \hD \times \mathbb{D}_{T,d} $, we equip them with the following premetrics respectively: for any $\bx:=(\tau,t, \ome, \mu, \tome)   $, $\bx':=(\tau', t', \ome', \mu', \tome') \in [0,T]   \times \hD \times \mathbb{D}_{T,d}$,
\be\label{productmetric}
\begin{split}
& d_m((t,\ome,\mu,\tome),(t',\ome',\mu',\tome')):= |t-t'|+ \|\ome_t- \ome'_{t'}\| + W_2(\mu_t,\mu'_{t'})+\|\tome_t-\tome'_{t'}\|,\\
&d_{sv}((\tau,t,\ome,\mu),(\tau',t',\ome',\mu')):=|\tau-\tau'|+|t-t'|+ \|\ome_t- \ome'_{t'}\| + W_2(\mu_t,\mu'_{t'}),\\
&d_{sm}(\bx, \bx'):=|\tau-\tau'|+|t-t'|+ \|\ome_t- \ome'_{t'}\| + W_2(\mu_t,\mu'_{t'})+\|\tome_t-\tome'_{t'}\|.
\end{split}
\ee

\begin{defi}\label{diff} Denote by $\FC( \hD )$ (or $\FC$ when there is no confusion), the subspace of $\FD$ which consists of all non-anticipative and continuous functionals with respect to the metric $d_p$ defined by \eqref{dp}. Furthermore,
\begin{itemize}
 \item[(i)] $\FC^{1,1,1}$ ($\FC^{1,1,1}_s$, resp.) is the subset of $\FC$ whose element  is continuously horizontally differentiable, (strongly, resp.) vertically differentiable w.r.t. both path and measure, with all derivatives being continuous with respect to the metric introduced in \eqref{productmetric}; 

     \item[(ii)]$\FC^{1,2,1}$ ($\FC^{1,2,1}_s$, resp.) is the subset of $ \FC^{1,1,1}$ ( $\FC^{1,1,1}_s$, resp.) whose element's  derivative $\partial_{\ome} f(t,\cdot,\mu,\tome) $ ( $\partial_{\ome_\tau} f(t,\cdot,\mu,\tome)$, $\tau \le t,$ resp.), $(t,\ome, \mu,\tome)\in \hD \times \D$, is further vertically differentiable (strongly vertically differentiable at $(\tau,t, \ome)$, resp.),  with all derivatives being continuous; 

      \item[(iii)]$\FC^{1,2,1,1}$ ($\FC^{1,2,1,1}_s$, resp.) is the subset of $  \FC^{1,2,1}$ ($\FC^{1,2,1}_s$, resp.) whose element's derivative functional $\partial_{\mu } f(t,\ome,\mu,\cdot) $  ( $\partial_{\mu_\tau} f(t,\ome,\mu,\cdot) $, $\tau \le t$, resp.), $(t,\ome,\mu, \tome)\in \hD \times \D$, is further vertically differentiable (strongly vertically differentiable at $(\tau,t,\tome)$, resp.), with all derivatives being continuous.
  \end{itemize}
Moreover, denote by $\FC^{1,1,1}_p$ the subset of $\FC^{1,1,1}$ such that the functional and all its first order derivatives have at most polynomial growth in the path variable: there exists $k \in \Z^+$, such that for $\phi=f, \partial_t f , \partial_{\ome } f,$ $\psi=\partial_{\mu }f $ and any $K>0,$

\be\label{polygrowth}
\begin{split}
 &|\phi(t,\ome, \mu) | \le C_{K} (1+\|\ome_t\|^k ),\ \ \
  |\psi(t,\ome,\mu,\tome)| \le C_{K} ( 1+ \|\ome_t \|^k+ \|\tome_t \|^k),\\
 &\ \ \ \ \forall   (t,\ome,\mu,\tome)\in \hD \times \D  \ \text{such that }|||\mu_t|||\le K,
\end{split}
\ee
for a constant $C_{K}$ depending only on $K.$ Notations such as $\FC_p,$ $\FC^{1,1,1}_{s,p}$ $\FC^{0,1,1}$ and $\FC^{1,2,1,1}_{p}$ are defined similarly.
 \end{defi}

\begin{rem}\label{state}
Assume that $f\in \FD$ is non-anticipative and has a state-dependent structure: $f(t,\ome,\mu)=\tf(t,\ome(t), \mu(t))$ for some function $\tf$ defined on $[0,T] \times \R^d \times \op_2(\R^d)$.  Then the horizontal differentiability and strongly vertical differentiability of $f$ is reduced to the differentiability of $\tf$ on $[0,T] \times \R^d \times \op_2(\R^d).$ Moreover,
\bea\nonumber
&&\partial_t f(t,\ome,\mu)= \partial_t \tf(t,\ome(t),\mu(t)), \ \  \partial_{\ome_\tau} f(t,\ome, \mu)= D_x f(t,\ome(t),\mu(t)),\quad \text{and}\\ \nonumber
&&\partial_{\mu_\tau}f(t,\ome,\mu,\tome)= \partial_{\nu} \tf(t,\ome(t),\mu(t), \tome(t)), \ \forall (t,\ome,\mu)\in [0,T] \times \D \times \op_2^D, \ \tau \le t,
\eea
where $\partial_{\nu} \tf$ is the Lions' derivative (see e.g. \cite{L}).
\end{rem}

\subsection{It\^o-Dupire formula}

Suppose that $(a,b)    $ is a bounded progressively measurable process on $(\Om, \MF, P) $ with values in $\R^m \times \R^{m\times d}.$ For any $(t, \gam)\in[0,T]\times \D,$ $X $ is the solution of SDE
\be\label{X}
\left\{
\begin{array}{l}
 d X(r)= a(r)dr + b(r) dB(r), \\
 X_t= \gam_t,\ \ \ r \ge t.
 \end{array}
 \right.
\ee
$(\Om', \MF' , P')$ is an atomless probability space with a $k$-dimensional Brownian motion $B'$ and $(c ,d ) $ is a bounded progressively measurable process on $(\Om', \MF' , P') $ with values in $  \R^n \times \R^{n \times k}.$ Given $\eta \in (\MM_2^D)' $, let $X' $ defined by SDE
\be\label{X'}
\left\{
\begin{array}{l}
 d X'(r)= c(r)dr + d(r) dB'(r), \\
 X'_t= \eta_t, \ \ \ r \ge t.
 \end{array}
 \right.
\ee
Moreover, let $({\tX}', \tc, \td, \tilde{B}', {\teta})$ be an independent copy of $(X',c,d,B',\eta)$, which means that $({\tX}', \tc, \td, \tilde{B}', {\teta})$ is defined in an independent probability space $(\tilde{\Omega},\tilde{\MF}, \tilde{P})$ from $(\Omega, \MF, P)$ and $(\Omega', \MF', P')$, and it has the same law as $(X',c,d,B',\eta)$.
Then we have the following It\^o-Dupire formula.

\begin{thm}\label{itoformula}
For any fixed $(t, \gam , \eta)\in [0,T] \times \D \times (\MM^D_2)',$ $X$ and $ X'$ are diffusion processes defined by \eqref{X} and \eqref{X'} respectively. Suppose that $f\in \FC^{1,2,1,1}_p(\hD)  $, and then we have

\begin{align}\nonumber
&f(s , X , \ML_{X'})- f(t,\gam, \ML_{\eta } )\\ \nonumber
&\ \ \ = \int_t^s \partial_{r}f(r, X, \ML_{X'}) dr + \int_t^s \partial_{\ome} f(r, X, \ML_{X'} ) d X(r)\\ \label{e-ito}
 &\ \ \ \ \ \ + \frac12  \int_t^{s } \text{Tr}\ [\partial_{\ome}^2 f(r, X_{r}, \ML_{X'} ) d\langle X\rangle(r) ]+ \E^{\tilde{P}'} [\int_t^s \partial_{\mu} f(r, X , \ML_{X'},\tilde{X}' )d \tilde{X}'(r) ] \\\nonumber
 &\ \ \ \ \ \ +  \frac12 \E^{\tilde{P}'} \int_t^{s } \text{Tr}\ [\partial_{\tome} \partial_\mu f(r, X, \ML_{X'} , \tilde{X}' ) \tilde{d}(r) \tilde{d}(r)^{ T} ] dr, \quad \forall s\ge t.
\end{align}

\end{thm}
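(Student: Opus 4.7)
The plan is to combine Dupire's classical functional It\^o formula (for the path part, at a frozen measure) with a Lions-type It\^o formula along the SDE-driven flow of measures (for the measure part, at a frozen path), glued together through a time-discretization argument. Since $f\in\FC^{1,2,1,1}_p$, all derivatives appearing in \eqref{e-ito} are continuous, and the boundedness of $(a,b,c,d)$ supplies the moment bounds needed for uniform integrability.

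First I would fix a partition $\pi=\{t=t_0<t_1<\cdots<t_n=s\}$ and telescope
\begin{equation*}
f(s, X, \ML_{X'})-f(t,\gam,\ML_\eta)=\sum_{i=0}^{n-1}\bigl[f(t_{i+1},X,\ML_{X'})-f(t_i,X,\ML_{X'})\bigr],
\end{equation*}
splitting each summand as $A_i+B_i$, where
\begin{equation*}
B_i:=f(t_{i+1},X,\ML_{X'_{t_i}})-f(t_i,X,\ML_{X'_{t_i}}), \qquad A_i:=f(t_{i+1},X,\ML_{X'_{t_{i+1}}})-f(t_{i+1},X,\ML_{X'_{t_i}}).
\end{equation*}
Here $B_i$ isolates the path evolution at frozen measure, while $A_i$ isolates the measure evolution at frozen path (with time index already advanced to $t_{i+1}$).

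For $B_i$, the measure $\mu:=\ML_{X'_{t_i}}$ is fixed, so $r\mapsto f(r,X,\mu)$ is a non-anticipative functional in $\FC^{1,2}$ in the sense of Dupire, and the classical functional It\^o formula of Dupire and Cont--Fourni\'e applied on $[t_i,t_{i+1}]$ yields the $dr$, $dX(r)$ and $d\langle X\rangle(r)$ terms of \eqref{e-ito}. For $A_i$, I pass through the lift $\Bf$: since $X'$ evolves as $dX'=c\,dr+d\,dB'$ on $[t_i,t_{i+1}]$, the increment $X'_{t_{i+1}}-X'_{t_i}$ is precisely $\xi\,1_{[t_i,T]}$ with $\xi\in L^2(\MF_{t_{i+1}}')$, so the Fr\'echet expansion \eqref{sfre}--\eqref{sgat} combined with a second-order Taylor expansion of $\Bf(t_{i+1},\cdot)$ and the representation \eqref{rep-dmu} gives, on an independent copy $(\tilde\Omega,\tilde P')$ of $(\Omega',P')$,
\begin{equation*}
A_i=\E^{\tilde P'}\!\!\int_{t_i}^{t_{i+1}}\!\!\partial_\mu f(t_{i+1},X,\ML_{X'},\tX')\,d\tX'(r)+\tfrac12\E^{\tilde P'}\!\!\int_{t_i}^{t_{i+1}}\!\!\text{Tr}\bigl[\partial_{\tome}\partial_\mu f(t_{i+1},X,\ML_{X'},\tX')\tilde d(r)\tilde d(r)^T\bigr]dr+r_i,
\end{equation*}
where $r_i$ collects the remainders. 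This is the mean-field It\^o step used in \cite{BLP17,CCD,WZ20}, here adapted to path measures via SVDs.

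Finally I would sum over $i$ and send $|\pi|\to 0$. Using joint continuity of $\partial_t f,\partial_\omega f,\partial_\omega^2 f,\partial_\mu f,\partial_{\tome}\partial_\mu f$, the polynomial-growth bounds \eqref{polygrowth}, and the moment estimates $\sup_r\E[\|X_r\|^p]+\sup_r\E[\|X'_r\|^p]<\infty$ for all $p\ge 1$ (from boundedness of the coefficients), the Riemann sums of $B_i$'s converge to the first three terms of \eqref{e-ito}, and an application of Fubini plus dominated convergence turns the $A_i$ sums into the last two terms (with $\tX'$ integration). The remainders $\sum_i r_i$ vanish in $L^1$ by the usual second-order Taylor bound $|r_i|\le \omega(|\pi|)\,|t_{i+1}-t_i|$ where $\omega$ is the modulus of continuity of $\partial_{\tome}\partial_\mu f$ composed with the flow.

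The main obstacle is the $A_i$ step: unlike the state-dependent setting, shifting the measure by the SDE increment $X'_{t_{i+1}}-X'_{t_i}=\int_{t_i}^{t_{i+1}}c\,dr+\int_{t_i}^{t_{i+1}}d\,dB'$ must be matched to a Fr\'echet perturbation of the lift $\Bf$ supported on $[t_i,T]$, and the second-order Taylor expansion on the Wasserstein space requires careful use of the joint continuity and uniqueness of $\partial_{\tome}\partial_\mu f$ (Lemma \ref{uni-dmu} and Remark \ref{ue-sdmu}) together with the polynomial-growth control in \eqref{polygrowth} to justify uniform integrability of the quadratic error terms, which are multiplied by $|t_{i+1}-t_i|$ but involve $\|\tX'\|^{2k}$-type weights.
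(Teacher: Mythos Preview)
Your overall strategy---telescope, split into path-at-frozen-measure and measure-at-frozen-path, and pass to the limit---is close in spirit to the paper's, but there is a genuine gap in your $A_i$ step which is exactly the point where the path-dependent framework bites.

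You write that ``the increment $X'_{t_{i+1}}-X'_{t_i}$ is precisely $\xi\,1_{[t_i,T]}$ with $\xi\in L^2(\MF'_{t_{i+1}})$''. This is false: for $r\in[t_i,t_{i+1})$ one has $X'_{t_{i+1}}(r)-X'_{t_i}(r)=X'(r)-X'(t_i)$, which varies continuously in $r$ and is \emph{not} a single step $\xi\,1_{[\tau,T]}$. The Fr\'echet/vertical derivative $\partial_\mu f$ in this paper (identities \eqref{fre}--\eqref{rep-dmu} and Definition~\ref{svdm}) is defined \emph{only} for perturbations of this step form; there is no chain rule or Taylor expansion available here for a general $L^2(\Omega';\D)$-perturbation of $\eta$. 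So your displayed expansion of $A_i$ with an $\int_{t_i}^{t_{i+1}} \partial_\mu f\,d\tX'(r)$ term is not justified by the definitions at hand, and the references \cite{BLP17,CCD} do not cover this either: they work with Lions' derivative for state measures on $\R^d$, where arbitrary $L^2$-perturbations are allowed.

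The paper's remedy is precisely to discretize $X$ and $X'$ into piecewise-constant c\`adl\`ag processes $X^n,X'^n$ along the partition \emph{before} telescoping. Then $X'^n_{t_{i+1}}-X'^n_{t_i}=(X'(t_{i+1})-X'(t_i))\,1_{[t_{i+1},T)}$ genuinely is a step perturbation, the first-order expansion via $\partial_\mu f$ applies, and the second-order term comes from a further (ordinary vertical) expansion of $\tome\mapsto\partial_\mu f(\cdots,\tome)$ along the same step. Convergence of the discretized identity then uses only continuity and polynomial growth. Your $B_i$ step, appealing to the Cont--Fourni\'e functional It\^o formula at frozen measure, is legitimate, but the same discretization trick is needed on the $X$ side in the paper's proof (classical It\^o applied to $\theta\mapsto f(t_{i+1},X^n_{t_i}+\theta\,1_{[t_{i+1},T)},\cdot)$), so that everything stays within the step-perturbation calculus. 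In short: the discretization of the processes is not a technical convenience here---it is what makes the vertical-derivative expansions well-defined.
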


\begin{proof}
Without loss of generality, assume $d=k=m=n=1$ and $s=T.$ Since both sides of identity \eqref{e-ito} depend on $(X',c,d, \eta)$ through its law, we assume that $(\Omega',\MF',P')$ is independent from $(\Omega, \MF, P)$ for simplicity of notations. Consider the following discretization of $X$ and $X':$ for any $n \ge 1, $ take $t=t_0 < t_1  < \cdots < t_n  =T$ as any partition of $[0,T]$ with vanishing modulus $\delta_n$. Define c\`adl\`ag processes $X^n, {X'}^n$ with $X^n_t = \gam_t, \ {X'}^n_t = \eta_t$ by
\begin{align*}
 & X^n(r):= \sum_{i=0}^{n-1} X(t_i) 1_{[t_i, t_{i+1}) }(r) + X(T)1_{\{T\}}(r),\\
 &    {X'}^n(r) := \sum_{i=0}^{n-1} X'(t_i) 1_{[t_i, t_{i+1}) }(r) + X'(T)1_{\{T\}}(r) , \quad r \ge t.
\end{align*}
Since $(a,b,c,d)$ is bounded, we see that for any $r\in[0,T],$
\begin{align}\label{con-xn}
&\E\|X^n \|^p_{\bS^p} \le \E\|X \|^p_{\bS^p} < \infty,  \quad  \lim_{n\rightarrow \infty}\|X^n_{t_i}- X_r\| = 0, \ P\text{-}a.s.,  \\ \label{con-x'}
&|||\ML_{{X'}^n}|||^2=\E\|{X'}^n \|^2_{\bS^2} \le \E\|X' \|^2_{\bS^2} < \infty,  \quad \lim_{n\rightarrow \infty} \|{X'}^n_{t_i}- X'_r\|  = 0, \ P'\text{-}a.s.,
\end{align}
where $i$ above satisfies $r\in[t_i,t_{i+1}).$ It follows from \eqref{con-x'} that
\be\label{con-dmu}
\lim_{n\rightarrow \infty} W_2(\ML_{X^{'n}_{t_i}}, \ML_{X'_r}) = 0.
\ee
Then we have
\be \label{dec}
\begin{split}
&f(T, X^{n}_T , \ML_{X^{'n}_T} )- f(t,  \gam_t , \ML_{ \eta_t })\\
 &\ \ \ = \sum_{i=0}^{n-1} [f(t_{i+1}, X_{t_{i+1}}^n, \ML_{(X^{'n})_{t_{i+1}}}) - f(t_{i }, X_{t_{i }}^n, \ML_{(X^{'n})_{t_{i }}}) ] \\
& \ \ \ =\sum_{i=0}^{n-1}\Big[ ( f(t_{i+1},X^{n}_{t_{i}}, \ML_{X^{'n}_{t_{i}}}) -  f(t_{i},X^{n}_{t_{i}}, \ML_{X^{'n}_{t_{i}}}) ) + ( f(t_{i+1},X^{n}_{t_{i+1}}, \ML_{X^{'n}_{t_{i}}})\\
&\ \ \ \ \ \ -  f(t_{i+1},X^{n}_{t_{i}}, \ML_{X^{'n}_{t_{i}}}) )
+ ( f(t_{i+1},X^{n}_{t_{i+1}}, \ML_{X^{'n}_{t_{i+1}}}) -  f(t_{i+1},X^{n}_{t_{i+1}}, \ML_{X^{'n}_{t_{i}}}) ) \Big].
\end{split}
\ee
\ \ \ \ \ \ Since
\be
\begin{split}
f(t_{i+1},X^{n}_{t_{i}}, \ML_{X^{'n}_{t_{i}}}) -  f(t_{i},X^{n}_{t_{i}}, \ML_{X^{'n}_{t_{i}}})  &= \int_{t_i}^{t_{i+1}} \partial_r f(r,X^{n}_{t_{i}}, \ML_{X^{'n}_{t_{i}}} ) dr\\
&  =\int_t^T\partial_r f(r,X^{n}_{t_{i}}, \ML_{X^{'n}_{t_{i}}} )1_{[t_i,t_{i+1})}(r) dr,
\end{split}
\ee
in view of inequalities \eqref{con-xn} and \eqref{con-dmu}, applying the dominated convergence theorem and passing to the limit for a subsequence, we have
\be\label{C}
\lim_{n\rightarrow \infty}\sum_{i=0}^{n-1}  \Big( f(t_{i+1},X^{n}_{t_{i}}, \ML_{X^{'n}_{t_{i}}}) -  f(t_{i},X^{n}_{t_{i}}, \ML_{X^{'n}_{t_{i}}}) \Big) = \int_t^T \partial_r f(r,X  , \ML_{X'  } ) dr, \ \ \ P\text{-}a.s..
\ee

\ \ \ \ \ \ For the second term on the right hand side of \eqref{dec},
since $f\in \FC^{1,2,1,1}_{p}, $ we have that $ \phi_i(\theta):= f(t_{i+1}, X^n_{t_{i}}+ \theta 1_{[t_{i+1}, T)}, \ML_{X^{'n}_{t_{i}}} )$ is twice continuously differentiable in $\theta$, and moreover,
\be
\phi'_i(\theta)= \partial_{\ome}f(t_{i+1}, X^n_{t_{i}}+ \theta 1_{[t_{i+1}, T)}, \ML_{X^{'n}_{t_{i}}}),\ \
\phi''_i(\theta)= \partial_{\ome}^2 f(t_{i+1}, X^n_{t_{i}}+ \theta 1_{[t_{i+1}, T)}, \ML_{X^{'n}_{t_{i}}}).
\ee
In the following, we will write $X_{i}:=X(t_i)\equiv X^n(t_i)$ and $\delta X_i:=X_{i+1}-X_i.$ Similar notations such as $X^{'}_i$ are self-explained.
Note that $X^n_{t_{i+1}}= X^n_{t_{i}}+ (X_{i+1}-X_i) 1_{ [t_{i+1}, T) }.$
Using the It\^o formula to $\phi(X(r)-X_i)$ on $r\in [t_i, t_{i+1}],$ we have
\begin{align}\nonumber
 &f(t_{i+1},X^{n}_{t_{i+1}}, \ML_{X^{'n}_{t_{i}}}) -  f(t_{i+1},X^{n}_{t_{i}}, \ML_{X^{'n}_{t_{i}}})\\ \label{lim1}
 &\ \ \ = \int_{t_i}^{t_{i+1}} \partial_{\ome} f(t_{i+1}, X^n_{t_i}+ (X(r)- X_i)1_{[t_{i+1}, T) }, \ML_{X^{'n}_{t_{i}}} ) d X(r)\\ \nonumber
&\ \ \ \ \ \ + \frac12 \int_{t_i}^{t_{i+1}} \partial_{\ome}^2 f(t_{i+1}, X^n_{t_i}+ (X(r)- X_i)1_{[t_{i+1}, T] }, \ML_{X^{'n}_{t_{i}}} ) d \lan X\ran (r).
\end{align}
Since $\| X^n_{t_i}+ (X(r)- X_i)1_{[t_{i+1}, T]} - X_r \|\rightarrow 0, \ P$-a.s. for any $r \in [t_i, t_{i+1})$, we have the following $P$-a.s. convergence   under the sup norm 
\[
\begin{split}
&\sum_{i=0}^{n-1} \partial_{\ome} f (t_{i+1},X^n_{t_i}+ (X(r)- X_i)1_{[t_{i+1}, T] }, \ML_{X^{'n}_{t_{i}}})1_{[t_i,t_{i+1})}(r)  \rightarrow \partial_{\ome} f (r, X, \ML_{X} ),	\\
&\sum_{i=0}^{n-1} \partial_{\ome}^2 f (t_{i+1},X^n_{t_i}+ (X(r)- X_i)1_{[t_{i+1}, T] }, \ML_{X^{'n}_{t_{i}}})1_{[t_i,t_{i+1})}(r)  \rightarrow \partial_{\ome}^2 f (r, X, \ML_{X} ),
\end{split}
\]
which implies $P$-a.s.
\[ 
\begin{split}
\int_t^T |\sum_{i=0}^{n-1} \partial_{\ome} f (t_{i+1},X^n_{t_i}+ (X(r)- X_i)1_{[t_{i+1}, T] }, \ML_{X^{'n}_{t_{i}}})1_{[t_i,t_{i+1})}(r)- \partial_{\ome} f (r, X, \ML_{X} )|^2 dr \rightarrow 0,\\
\int_t^T | \sum_{i=0}^{n-1} \partial_{\ome}^2 f (t_{i+1},X^n_{t_i}+ (X(r)- X_i)1_{[t_{i+1}, T] }, \ML_{X^{'n}_{t_{i}}})1_{[t_i,t_{i+1})}(r) - \partial_{\ome}^2 f (r, X, \ML_{X} )| dr \rightarrow 0.
\end{split}
\]
In view of the above convergence and identity \eqref{lim1}, passing to the limit in a subsequence, we have
\be \label{b}
\begin{split}
&\lim_{n\rightarrow \infty}\sum_{i=0}^{n-1} \Big( f(t_{i+1},X^{n}_{t_{i+1}}, \ML_{X^{'n}_{t_{i}}}) -  f(t_{i+1},X^{n}_{t_{i}}, \ML_{X^{'n}_{t_{i}}}) \Big)\\
&\ \ \ = \int_t^T \partial_\ome f(r, X , \ML_{X' } ) d X(r)  + \frac12  \int_t^T \partial_{\ome}^2 f(r, X  , \ML_{X' } ) d \lan X\ran (r),  \ \  P\text{-}a.s..
\end{split}
\ee

\ \ \ \ \ \ For the last term in the decomposition \eqref{dec}, we have
\begin{align*} \nonumber
&f(t_{i+1},X^{n}_{t_{i+1}}, \ML_{X^{'n}_{t_{i+1}}}) -  f(t_{i+1},X^{n}_{t_{i+1}}, \ML_{X^{'n}_{t_{i}}})\\
&\ \ \ = \int_0^1 \E' \Big[ \partial_{\mu} f(t_{i+1},X^{n}_{t_{i+1}}, \ML_{X^{'n}_{t_i}+ \theta (\delta X'_i) 1_{[t_{i+1, T})}},X^{'n}_{t_i}+ \theta (\delta X'_i) 1_{[t_{i+1, T})}  ) ( \delta X'_i)      \Big] d\theta\\
&\ \ \  = \int_0^1 \E' \Big[ \partial_{\mu} f(t_{i+1},X^{n}_{t_{i+1}}, \ML_{X^{'n}_{t_i}+ \theta ( \delta X'_i) 1_{[t_{i+1, T})}}, X^{'n}_{t_i} ) ( \delta X'_i)      \Big] d\theta\\ \nonumber
&\ \ \ \ \ \ +  \int_0^1 \int_0^1 \E' \Big[\partial_{\tome} \partial_{\mu} f(t_{i+1},X^{n}_{t_{i+1}}, \ML_{X^{'n}_{t_i}+ \theta ( \delta X'_i) 1_{[t_{i+1, T})}}, X^{'n}_{t_i}+ \lam \theta ( \delta X'_i)1_{[t_{i+1, T})} ) \theta ( \delta X'_i)^2      \Big] d\theta d \lambda. \\
\end{align*}
Since $\| X^{'n}_{t_i}+   \theta ( \delta X'_i)1_{[t_{i+1}, T)}- X'_r \| \rightarrow 0, \  P' $-a.s. for any $r\in[0 ,T]$
with $r\in[t_i,t_{i+1}]$,  we have
$$\lim_{n\rightarrow \infty} W_2(\ML_{X^{'n}_{t_i}+ \theta (\delta X'_i) 1_{[t_{i+1, T})}} , \ML_{X'_r}) = 0.$$
In view of \eqref{con-xn}, \eqref{con-dmu} and the dominated convergence theorem, we have
\begin{align*}
&\lim_{n\rightarrow \infty}\sum_{i=0}^{n-1} \int_0^1 \Big[\partial_{\mu} f(t_{i+1},X^{n}_{t_{i+1}}, \ML_{X^{'n}_{t_i}+ \theta ( \delta X'_i) 1_{[t_{i+1, T})}}, X^{'n}_{t_i} ) ( \delta X'_i)\Big] d\theta\\
&\ \ \ =  \int_t^T  \partial_{\mu} f(r , X^{\gam_t} , \ML_{X^{' } }, X^{'}  ) d X'(r), \  P \times P'\text{-}a.s..
\end{align*}
Then, according to Fubini's theorem, we have
\be \label{a1}
\begin{split}
&\lim_{n\rightarrow \infty} \sum_{i=0}^{n-1} \E'  \int_0^1 \Big[\partial_{\mu} f(t_{i+1},X^{n}_{t_{i+1}}, \ML_{X^{'n}_{t_i}+ \theta ( \delta X'_i) 1_{[t_{i+1, T})}}, X^{'n}_{t_i} ) ( \delta X'_i)\Big] d\theta\\
&\ \ \ = \E' [ \int_t^T  \partial_{\mu} f(r , X , \ML_{X^{' } }, X^{'}  ) d X'(r)], \ \ \ \ \ \ P\text{-}a.s..
\end{split}
\ee

\ \ \ \ \ \ By a similar argument as above, we have  
\be \label{a2}
\begin{split}
&\lim_{n\rightarrow \infty} \int_0^1\!\!\! \int_0^1 \E' \Big[\partial_{\tome} \partial_{\mu} f(t_{i+1},X^{n}_{t_{i+1}}, \ML_{X^{'n}_{t_i}+ \theta ( \delta X'_i) 1_{[t_{i+1, T})}}, X^{'n}_{t_i}+ \lam \theta ( \delta X'_i)1_{[t_{i+1, T})} ) \theta ( \delta X'_i)^2      \Big] d\theta d \lambda\\
&\ \ \ = \E'[ \int_t^T \partial_{\tome} \partial_\mu f(r, X, \ML_{X'} , X' )  dr  ],  \  P\text{-}a.s..
\end{split}
\ee
In view of \eqref{C}, \eqref{b}, \eqref{a1} and \eqref{a2},  taking $n \to \infty$ in \eqref{dec}, we obtain the desired identity.

\end{proof}

\ \ \ \ \ \ Note that $(\ome_s)_\tau=\ome_s$  and $(\mu_s)_\tau=\mu_s$  for any $\tau\ge s.$
In particular, if the non-anticipative functional $f$ is strongly vertically differentiable, we have the following partial It\^{o}-Dupire formula.

\begin{coro}\label{sito}
Suppose that $(X,  X')$ is defined as in Theorem \ref{itoformula} and $f  \in \FC^{0,2,1,1}_{s,p}(\hD)$.
Then we have that for any $t\le s \le v \le T,  $
\be \nonumber
\begin{split}
&f(v , X_s, \ML_{X'_s})- f(v,\gam_t, \ML_{\eta_t } )\\
 &\ \ \  =  \int_t^s \partial_{\ome_r} f(v, X_r, \ML_{X'_r} ) d X(r)+ \frac12  \int_t^{s } \text{Tr}\ [\partial_{\ome_r}^2 f(v, X_{r}, \ML_{X'_r} ) d\lan X \ran (r) ] \\ \nonumber
 &\ \ \ \ \ \ + \E^{\tilde{P}'} [\int_t^s \partial_{\mu_r} f(v, X_r , \ML_{X'}, \tilde{X}' )d \tilde{X}'(r) ] + \frac12 \E^{\tilde{P}'} \int_t^{s } \text{Tr}\ [\partial_{\tome_r} \partial_{\mu_r} f(v, X_r, \ML_{X'_r} , \tilde{X}'_r ) \tilde{d}(r) \tilde{d}(r)^T ] dr.
\end{split}
\ee

\end{coro}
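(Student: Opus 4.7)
The plan is to reduce Corollary \ref{sito} directly to Theorem \ref{itoformula} by packaging the fixed terminal argument $v$ together with a moving stopping time into a new non-anticipative functional. Concretely, for the given $v \in [t, T]$ I would introduce the auxiliary functional
\[
\tf : [0,v] \times \D \times \op^D_2 \longrightarrow \R, \qquad \tf(s, \ome, \mu) := f(v, \ome_s, \mu_s).
\]
By construction $\tf(s,\ome,\mu) = \tf(s, \ome_s, \mu_s)$, so $\tf$ is non-anticipative, and if Theorem \ref{itoformula} is applied to $\tf$ on $[t,s]$ the left-hand side is exactly $f(v, X_s, \ML_{X'_s}) - f(v, \gam_t, \ML_{\eta_t})$.

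The main verification is that $\tf \in \FC^{1,2,1,1}_p$, together with the identification of its derivatives. The horizontal derivative vanishes, $\partial_r \tf \equiv 0$, since $(\ome_s)_{s+h} = \ome_s$ for $h \ge 0$; this is why the weaker regularity hypothesis $f \in \FC^{0,2,1,1}_{s,p}$ suffices. For the path-direction derivatives, the pointwise identity $(\ome + x 1_{[s,T]})_s = \ome_s + x 1_{[s,T]}$ gives
\[
\partial_\ome \tf(s, \ome, \mu) = \partial_{\ome_s} f(v, \ome, \mu), \qquad \partial_\ome^2 \tf(s, \ome, \mu) = \partial_{\ome_s}^2 f(v, \ome, \mu),
\]
so Dupire's vertical derivative of $\tf$ at the current time is precisely the SVD of $f$ at $(s, v)$. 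The analogous identification in the measure direction is obtained by lifting: for $\xi \in L^2_P(\MF_s, \R^d)$ and $\eta$ with $\ML_\eta = \mu$, one has $(\eta + \xi 1_{[s,T]})_s = \eta_s + \xi 1_{[s,T]}$, which matches Definition \ref{svdm} exactly and yields
\[
\partial_\mu \tf(s, \ome, \mu, \tome) = \partial_{\mu_s} f(v, \ome, \mu, \tome), \qquad \partial_{\tome}\partial_\mu \tf(s, \ome, \mu, \tome) = \partial_{\tome_s}\partial_{\mu_s} f(v, \ome, \mu, \tome).
\]
Continuity and polynomial growth of all these derivatives descend directly from the hypotheses on $f$, giving $\tf \in \FC^{1,2,1,1}_p$.

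With the identifications in hand, I would apply Theorem \ref{itoformula} to $\tf$, observe that the $\partial_r \tf$ term drops out, and substitute the SVD identities term by term. Using the non-anticipativity of $\tf$ to replace $(X, \ML_{X'})$ by $(X_r, \ML_{X'_r})$ inside each integrand produces exactly the formula claimed in the corollary.

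The only genuinely delicate point — and the one I would treat most carefully — is the measure-derivative identification $\partial_\mu \tf = \partial_{\mu_s} f$. One must verify that Fr\'echet differentiability of the lift of $\tf$ in the sense of (\ref{fre}) is not only implied by, but equivalent in the present setting to, the strong Fr\'echet condition (\ref{sfre}) for $f$ at the earlier time $s$; this is where the interaction between the time cut-off and the $L^2$-perturbation really has to be unpacked. Everything else is essentially bookkeeping built on top of Theorem \ref{itoformula}.
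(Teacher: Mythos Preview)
Your proposal is correct and follows essentially the same route as the paper: define the auxiliary non-anticipative functional $\tf(r,\ome,\mu):=f(v,\ome_r,\mu_r)$, observe that its horizontal derivative vanishes while its vertical and measure derivatives coincide with the strong vertical derivatives of $f$ at the earlier time $r$, and then invoke Theorem~\ref{itoformula}. The paper carries out exactly this reduction (after normalizing $v=T$), and simply asserts the derivative identifications ``by definitions of vertical derivatives and strongly vertical derivatives''; your explicit unpacking of the cut-off identities and of the Fr\'echet condition in the measure direction is a welcome elaboration rather than a departure.
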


\begin{proof}

Without loss of generality, assume $v=T.$ For any $r\in [t,s],$ let
\be
\tf(r,\ome,\mu):= f(T, \ome_r, \mu_r).
\ee
Obviously, $\tf$ is non-anticipative, and moreover, we have that for any $h\ge0,$
$$
\tf(r+h, \ome_r, \mu_r)=f(T, (\ome_r)_{r+h}, (\mu_r)_{r+h} )= f(T, \ome_r , \mu_r ) =\tf(r,\ome_r, \mu_r),
$$
which implies $\partial_r \tf(r,\ome_r, \mu_r )=0.$ Furthermore, it follows from definitions of vertical derivatives and strongly vertical derivatives that
\beaa
&\partial_{\ome} \tf(r,\ome,\mu)= \partial_{\ome_r} f(T, \ome_r, \mu_r), \ \  \ & \partial_{\ome}^2 \tf(r,\ome,\mu)= \partial_{\ome_r}^2 f(T, \ome_r, \mu_r),\\
&\partial_{\mu} \tf(r,\ome,\mu,\tome)= \partial_{\mu_r} f(T, \ome_r, \mu_r,\tome), \ \ & \text{and}\quad \partial_{\tome}\partial_{\mu} \tf(r,\ome,\mu,\tome)=  \partial_{\tome_r}\partial_{\mu_r}  f(T, \ome_r, \mu_r,\tome_r).
\eeaa
Applying Theorem \ref{itoformula} to $\tf(r,X,\ML_{X'})$ on $r\in[t,s]$, and we obtain the desired formula.

\end{proof}


\section{Solution of  semilinear path-dependent master equations }

\ \ \ \ \ \ In this section we show the well-posedness of \eqref{ppde-1}, during which we will exploit the regularity of corresponding FBSDEs (see Section 4.2). We leave the detailed proof of such regularity in Section 4.

\ \ \ \ \ \ To build smooth solutions to path-dependent mean-field PDE
\bea\label{pde-ext'}
\left\{
\begin{array}{l}
\partial_t u(t,\gam, \mu ) + \frac12 \text{Tr} \left[ \partial_{\ome}^2 u(t,\gam,\mu)\sigma_1(\gam_t)\sigma_1(\gam_t)^T  \right]+ \partial_{\ome}u(t,\gam,\mu) b_1(\gam_t)\\[2mm]
 \quad  +\frac12 \text{Tr} \left[ \E^{P}[\partial_{\tome} \partial_\mu u(t,\gam, \mu, \eta )]\sigma_2(\mu_t)\sigma_2(\mu_t)^T \right]+ \E^{P}[\partial_{\mu} u(t,\gam,\mu,\eta ) ] b_2(\mu_t)\\[2mm]
\quad + f(t,\gam, u(t,\gam,\mu),\sigma_1(\gam_t) \partial_\ome u(t,\gam,\mu), \mu, \ML_{u(t,\eta,\mu)})=0,\\
 \\
 u(T, \gam, \mu)= \Phi(\gam_T, \mu_T), \ \ \ (t,\gam,\mu) \in [0,T]\times \mC \times \op_2^C,
\end{array}
\right.
\eea
we firstly need to study the case when $(b_1,\sigma_1)=(b_2,\sigma_2)=(0,I)$. In the following, we usually write $f(\ome_t ,\mu_t ):=f(t,\ome, \mu )$ for simplicity when $f$ is non-anticipative. 

\ \ \ \ \ \ To consider the regularity of terminal functional $\Phi$. Let product spaces $[0,T] \times \D \times \op^D_2$ and $[0,T] \times \D \times  \op^D_2 \times \D $ be equipped with the following metrics respectively: for any $\bx:=(\tau,  \ome, \mu, \tome)   $, $\bx':=(\tau',  \ome', \mu', \tome') \in [0,T]   \times \D \times   \op^D_2 \times \mathbb{D}_{T,d}$,
\be\label{T-productmetric}
\begin{split}
&d_{T,sv}((\tau ,\ome,\mu),(\tau' ,\ome',\mu')):=|\tau-\tau'|+  \|\ome_T- \ome'_{T}\| + W_2(\mu_T,\mu'_{T}),\\
&d_{T,sm}(\bx, \bx'):=|\tau-\tau'| + \|\ome_T- \ome'_{T}\| + W_2(\mu_T,\mu'_{T})+\|\tome_T-\tome'_{T}\|.
\end{split}
\ee

\begin{defi}
We write $\Phi \in \FC_T(\hD)$ (or $\FC_T$ if no confusion raised) if $\Phi:\D \times \op_2^D \mapsto \R $ is continuous on $\D \times \op_2^D.$ Furthermore, we write
\begin{itemize}
\item[(i)] $\Phi \in \FC_{T,lip}$ if it is uniformly Lipschitz continuous on $\D \times \op_2^D$:
    $$
| \Phi(\ome_T, \mu_T)- \Phi(\ome'_T, \mu'_T) |\le C (\|\ome_T-\ome'_T\|+ W_2(\mu_T, \mu'_T)), \ \forall (\ome,\mu), (\ome',\mu') \in \D \times \op_2^D,
$$
 for some constant $C \ge 0$;

\item[(ii)] $\Phi \in \FC^{1,1}_{T,lip}$ if $\Phi \in \FC_{T,lip}$ and its SVDs $\partial_{\ome_\tau}\Phi$ and $\partial_{\mu_\tau}\Phi$  is continuous under the metric introduced in \eqref{T-productmetric} respectively. Moreover, SVDs are uniformly Lipschitz continuous with respect to $\tau \in [0,T]$ in $(\ome,\mu)\in \D \times \op_2^D$ and $(\ome,\mu,\tome) \in \D \times \op_2^D \times \D $, respectively;

\item[(iii)] $\Phi\in \FC^{2,1,1}_{T,lip}$ if $\Phi \in \FC^{1,1}_{T,lip}$ and for any $(\tau, \ome,\mu,\tome) \in \hD \times \D,$ its SVDs $\partial_{\ome_\tau}\Phi( \cdot,\mu_T)$ and $\partial_{\mu_\tau}\Phi(\ome_T,\mu_T,\cdot)$ are continuously strongly vertically differentiable at $(\tau,T, \ome)$ and $(\tau,T, \tome)$ under the metric $d_{T,sv}$ and $d_{T,sm}$, respectively. Moreover, all second-order derivatives are uniformly Lipschitz continuous with respect to the time parameter.

\end{itemize}

\end{defi}

\ \ \ \ \ \ To obtain the classical solution to \eqref{pde-ext'} with $(b_1,\sigma_1)=(b_2,\sigma_2)=(0,I),$ we introduce the following increasingly stringent assumptions.

\begin{itemize}
\item[ $\textbf{(H0)}$]
 $(i)$ The functional $\Phi\in \FC_{T,lip}(\hD)$ ; \
 $(ii)$ $f $ is a non-anticipative continuous function on $ [0,T] \times \D \times \R \times \R^d \times \op_2^D \times \op_2(\R)  $, and for any $(t,\ome,\mu)\in[0,T]\times \D \times \op_2^D,$ $f(t,\ome,\cdot,\cdot,\mu,\cdot)$ is continuously differentiable on $\R\times \R^d \times \op_2(\R)$. Moreover, for any $t \in [0,T]$, $f(t, \cdot, \cdot, \cdot , \cdot , \cdot )$ and $\partial_\nu f(t, \cdot, \cdot, \cdot , \cdot , \cdot, \cdot )$ are uniformly Lipschitz continuous.

\end{itemize}

\begin{itemize}
\item[ $\textbf{(H1)}$]
 $(i)$\ The functional $\Phi \in \FC^{1,1}_{T,lip}(\hD)$;
 $(ii)$\
  $f $ is a non-anticipative continuous function on $ [0,T] \times \D \times \R \times \R^d \times \op_2^D \times \op_2(\R)  $, and for any $(t,\ome,\mu)\in[0,T]\times \D \times \op_2^D,$ $f(t,\ome,\cdot,\cdot,\mu,\cdot)$ is  differentiable on $\R\times \R^d \times \op_2(\R)$
  with bounded derivatives. For any $( y,z,\nu) \in \R \times \R^d \times \op_2(\R),$ $f(t,\ome, y,z, \cdot, \nu)$ is strongly vertically differentiable at $\mu_t$ and $f(t,\cdot, y,z, \mu, \nu)$ is strongly vertically differentiable at $\ome_t$.
  Moreover, $\partial_{(y,z,\nu, {\ome_\tau}, {\mu_\tau})} f$ is continuous, and for any $\tau \le t,$ $(I,\partial_{(y,z,\nu, {\ome_\tau}, {\mu_\tau})}) f(t,\cdot )$ is uniformly Lipschitz continuous.

\end{itemize}

\begin{itemize}
\item[ $\textbf{(H2)}$]
 $(i)$\ $\Phi \in \FC^{2,1,1}_{T,lip}(\hD)$;
 $(ii)$\  $f:[0,T] \times \D \times \R \times \R^d \times \op_2^D \times \op_2(\R) \mapsto \R$ satisfies Assumption {\bf(H1)}(ii). Moreover, for any $(t,\ome,y,z,\mu,\nu)\in [0,T] \times \D\times \R \times \R^d \times \op_2^D \times \op_2(\R),$
 $(\partial_y f(t,\ome,\cdot,\cdot, \mu,\nu)  ,\ \partial_z f(t,\ome,\cdot,\cdot, \mu,\nu))$ is differentiable on $\R\times \R^d$; $(\partial_y f(t,\cdot,y,z,\mu,\nu)$,\ $\partial_z f(t,\cdot,y,z,\mu,\nu))$ is strongly vertically differentiable at $ (t,\ome)$; for any $\tau \le t$, $\partial_{\ome_\tau}f(t,\cdot,y,z,\mu,\nu)$ is differentiable at $(\tau,t,\ome)$; $\partial_\nu f(t,\ome,y,z,\mu,\nu,\cdot)$ is differentiable on $\R$; for any $\tome \in \D,$ $\partial_{\mu_\tau} f(t,\ome,y,z,\mu,\nu,\cdot)$ is differentiable at $(\tau,t,\tome)$. All second order derivatives are continuous and $(\partial_{y}^2, \partial_y \partial_z, \partial_z^2, \partial_{\ome_\tau}^2, \partial_{\ty}\partial_{\nu}, \partial_{\tome_\tau}\partial_{\mu_\tau})f(t,\cdot)$ is uniformly Lipschitz continuous.

\end{itemize}

\subsection{The decoupling field and its regularity}

\ \ \ \ \ \ Assume that {\bf(H2)} holds for $(\Phi,f).$ Recall for any $\gam, \ome \in \D,$  $\ome^{\gamma_t} \in \D$ with
\be
\ome^{\gamma_t}(\cdot):= \gamma_t(\cdot) + (\ome(\cdot)-\ome(t)) 1_{[t,T]}(\cdot).
\ee
For any $(t,\gam,\mu)\in\hD,$
let $Y^{\gl} $ solve the path-dependent BSDE
\be \label{e2'}
\begin{split}
Y^{\gamma_t, \eta_t}{(s)}&= \Phi(B_T^{\gamma_t}, \ML_{B_T^{\eta_t} }) + \int_s^T f( B_r^{\gamma_t}, Y^{\gamma_t, \eta_t}(r), Z^{\gamma_t,\eta_t}(r) ,  \ML_{B_r^{\eta_t}}, \ML_{Y^{\eta_t}(r)}  ) dr\\
&\ \ \ \ -  \int_s^T Z^{\gamma_t,\eta_t}(r) dB(r), \ \ \ \ \ \ s\in [t,T],
\end{split}
\ee
where $Y^{\eta_t}$ is the unique solution of the mean-field BSDE
\be \label{e1'}
\begin{split}
Y^{\eta_t}(s)&= \Phi(B_T^{\eta_t}, \ML_{B_T^{\eta_t} }) + \int_s^T f( B_r^{\eta_t}, Y^{\eta_t}(r),  Z^{\eta_t}(r)  , \ML_{B_r^{\eta_t}},  \ML_{Y^{\eta_t}(r) }) dr\\
&\ \ \ \ - \int_s^T Z^{\eta_t}(r) dB(r),\ \ \ \ \ \  \ s\in [t,T].
\end{split}
\ee
According to Lemma \ref{unibd} and Remark \ref{law-dep}, we know that $Y^{\gamma_t, \eta_t}(t)=Y^{\gamma_t, \ML_{\eta_t}}(t).$
For any $(t,\gam,\mu)\in\hD$ with $\mu=\ML_{\eta}$, define the decoupling field
\be\label{decop}                                                u(t,\gam, \mu ): = Y^{\gamma_t, \ML_{\eta_t}}(t).
\ee
By the well-posedness of \eqref{e2'} and \eqref{e1'}, we see that $u \in \FD$ and it is non-anticipative. On the other hand, for any $ v \ge t$, $(\ome,y,z) \in \D \times \R \times \R^d,$ denote
\be
\hat{\Phi}_{\mu_t}(\ome_T):= \Phi(\ome_T, \ML_{B^{\eta_t}_T} ),\ \ \ \hat{f}_{\mu_t}(v,\ome, y, z ):= f( v, \ome, y, z , \ML_{B_{v}^{\eta_t}}, \ML_{Y^{ \eta_t}({v})} ).
\ee
Let $\hY^{\gamma_{v}, \ML_{\eta_t}}$ is the unique solution of the following (path-dependent) BSDE: for $s \ge v,$
\be\label{hatu}
\hY^{\gamma_{v}, \eta_t}{(s)}= \hat{\Phi}_{\mu_t}(B_T^{\gamma_{v}}) + \int_s^T \hf_{\mu_t}(r, B_r^{\gamma_{v}}, \hY^{\gamma_{v}, \eta_t}(r), \hZ^{\gamma_{v},\eta_t}(r)   ) dr-  \int_s^T \hZ^{\gamma_{v},\eta_t}(r) dB(r).
\ee
According to Remark \ref{H2} and \cite[Theorem 3.9]{PW16}, there exists a non-anticipative mapping
$
\hu_{\mu_t}: [v,T] \times \D \mapsto \R,
$
such that for any $s\ge v,$
\be\label{uflow}
\hu_{\mu_t}(s,B^{\gam_{v}} ) =  \hY^{\gam_{v}, \mu_t}(s), \ \ \ \partial_{\gam_{v}} \hu_{\mu_t}(s,B^{\gam_{v}} ) = \hZ^{\gam_{v}, \mu_t}(s).
\ee
Moreover, $\hu_{\mu_t}$ is the classical solution of the following semilinear PPDE
\be\label{e-hu}
\left\{
\begin{array}{l}
\partial_v \hu_{\mu_t} (v, \gam ) +  \frac12 \text{Tr}\ [\partial^2_{\ome_v}\hu_{\mu_t}(v,\gam)] + \hf_{\mu_t}(v,\gam, \hu_{\mu_t}(v,\gam), \partial_{\ome_v}\hu_{\mu_t}(v,\gam) )=0, \\[2mm]
\hu_{\mu_t}(T, \gam)= \hPhi(\gam), \ \ \ v \ge t.
\end{array}
\right.
\ee
Indeed, denote $\hat{\eta}:= B^{\eta_t},$ and we have
\be
\hu_{\mu_t}(v, \gam)= Y^{\gam_{v}, \hat{\eta}_{v}}(v).
\ee
Concerning the relation among $Y^{\gl},$ $u(t,\gam, \mu)$ and $\hu_{\mu_t} (v, \gam ),$ we have

\begin{prop}\label{flow}
Assume that {\bf(H2)} holds for $(\Phi,f).$
For any $(t, \gam, \mu) \in \hD $ and $s\ge   t,$
\bea\label{u=hu}
&&u(t,\gam, \mu)= \hu_{\mu_t}(t,\gam), \\ \label{us=hu}
&& u(s,\ome_s^{\gam_t}, \ML_{B^{\eta_t}_s})=\hu_{\mu_t}(s,\ome^{\gam_t}_s) ,\ \ \forall \ome \in \mC,   \\ \label{us=ys}
&&  u(s,B_s^{\eta_t}, \ML_{B^{\eta_t}_s}) = Y^{\eta_t}(s).
\eea

\end{prop}

\begin{proof}

The first identity follows immediately from \eqref{decop} and \eqref{uflow}. By the uniqueness of BSDE \eqref{e1'}, we see that for any $t \le v \le s,$
\be
(B^{B^{\eta_t}_v}_s, Y^{B^{\eta_t}_v}(s) ) = (B^{\eta_t}_s , Y^{\eta_t}(s) ),
\ee
and in particular
\be
\ML_{(B^{B^{\eta_t}_v}_s, Y^{B^{\eta_t}_v}(s) )} = \ML_{(B^{\eta_t}_s , Y^{\eta_t}(s) ) } .
\ee
Then in view of the uniqueness of solutions of BSDE \eqref{hatu} and definition \eqref{uflow}, we have
\be
\hu_{\mu_t}(s, B_s^{\gam_t}) = \hu_{\ML_{B_v^{\eta_t}}} (s, B_s^{\gam_t}) .
\ee
In particular if $v=s,$ $\hu_{\mu_t}(s, B_s^{\gam_t}) = \hu_{\ML_{B_s^{\eta_t}}} (s, B_s^{\gam_t}) .$
On the other hand, by relation \eqref{u=hu}, we have
\be
\hu_{\ML_{B_s^{\eta_t}}} (s, B_s^{\gam_t})= u(s,B_s^{\gam_t}, \ML_{B_s^{\eta_t}} ),
\ee
and thus \eqref{us=hu} in view of the continuity of $u(t,\ome, \mu)$ in $\ome \in \mC$ (see Lemma \ref{unibd}) and the support theorem for diffusion processes.
Finally, since
\bea
Y^{\eta_t}(s)= Y^{\gam_t, \eta_t}(s)|_{\gam=\eta}=\hu_{\mu_t}(s, B^{\gam_t}_s )|_{\gam=\eta} = \hu_{\mu_t}(s, B^{\eta_t}_s ), 
\eea
 identity \eqref{us=ys} follows from  \eqref{us=hu}.

\end{proof}

\ \ \ \ \ \ To show that $u$ given by \eqref{decop} provide a smooth solution to the path-dependent mean-field PDE, we need the following regularity of $u$, which is a result of regularity of corresponding BSDEs proved in Section 4.

\begin{prop}\label{regular}

Suppose that $(f,\Phi)$ satisfies Assumption {\bf(H2)}. The decoupling field $u$ given by \eqref{decop} belongs to $ \FC^{0,2,1,1}_{s,p}(\hD)$.

\end{prop}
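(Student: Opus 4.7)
The plan is to read off the required strong vertical derivatives of $u(t,\gam,\mu) = Y^{\gam_t, \ML_{\eta_t}}(t)$ directly from the BSDE regularity developed in Section 3, and then verify joint continuity in the premetric $d_{sp}$ together with polynomial growth in $\gam$.

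First I would establish continuity of $u$ on $\hD$. Continuity in $(\gam,\mu)$ at fixed $t$ is immediate from estimate \eqref{gam2} of Lemma \ref{unibd}: on any bounded set $|||\mu_t|||, |||\mu'_t||| \le K$,
\[
|u(t,\gam,\mu) - u(t,\gam',\mu')| \le \|Y^{\gam_t, \eta_t} - Y^{\gam'_t, \eta'_t}\|_{\bS^2} \le C_K(\|\gam_t-\gam'_t\| + W_2(\mu_t, \mu'_t)).
\]
Joint continuity in $(t,\gam,\mu)$ follows by combining this with the standard $\sqrt{|s-t|}$-time regularity of BSDEs, using boundedness of $(b_1,\sigma_1,b_2,\sigma_2)$ and Assumption \textbf{(H2)}.

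Next I would define the candidate derivatives pointwise through the BSDEs already constructed: for $\tau \le t$ and $\tome \in \D$, set
\begin{align*}
\partial_{\ome_\tau} u(t,\gam,\mu) &:= \partial_{\ome_\tau} Y^{\gl}(t), & \partial^2_{\ome_\tau} u(t,\gam,\mu) &:= \partial^2_{\ome_\tau} Y^{\gl}(t),\\
\partial_{\mu_\tau} u(t,\gam,\mu,\tome) &:= \partial_{\mu_\tau} Y^{\gam_t, \ML_{\eta_t}, \tome_t}(t), & \partial_{\tome_\tau} \partial_{\mu_\tau} u(t,\gam,\mu,\tome) &:= \partial_{\tome_\tau}\partial_{\mu_\tau} Y^{\gam_t, \ML_{\eta_t}, \tome_t}(t).
\end{align*}
That these are truly the strong vertical derivatives of $u$ follows from the identification $u(t,\gam^{\tau,x}, \mu) = Y^{\gam^{\tau,x}_t, \ML_{\eta_t}}(t)$ (and its measure-side analogue $u(t,\gam,\ML_{\eta_t + \lam\xi 1_{[\tau,T]}}) = Y^{\gam_t, \ML_{\eta_t + \lam\xi 1_{[\tau,T]}}}(t)$), combined with Proposition \ref{s-bdd1} (first-order path SVD), the second-order extension of Lemma \ref{drr} explicitly noted right after its proof (second-order path SVD), Proposition \ref{s-dfre} together with the representation \eqref{s-fre-rep} (measure SVD), and Lemma \ref{s-dxu} (mixed second-order derivative).

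Joint continuity of these derivatives at fixed $(\tau,t)$ in the remaining variables $(\gam,\mu,\tome)$, as well as polynomial growth in $\gam$ on sets $\{|||\mu_t||| \le K\}$, then follow from the quantitative stability estimates of Section 3: the second inequality in \eqref{3.7} and its second-order analogue from Lemma \ref{drr} handle the path-side derivatives, while Lemmas \ref{s-reg-U} and \ref{s-dxu} handle the measure and mixed derivatives. The main obstacle, and where the most careful work is needed, is continuity of the derivatives in the time variables $(\tau,t)$ demanded by the premetric $d_{sp}$. To handle it I would invoke the flow identities \eqref{us=hu} and \eqref{us=ys} of Proposition \ref{flow}: freezing the measure flow $\ML_{B^{\eta_t}}$, the PPDE \eqref{e-hu} for $\hu_{\mu_t}$ gives $Y^{\gam_t, \eta_t}(s) = \hu_{\mu_t}(s, B^{\gam_t}_s)$, so SVDs of $u$ at shifted base points can be compared to those at $(\tau,t)$ by applying the partial It\^o--Dupire formula of Corollary \ref{sito} to the derivative functionals $\partial_{\ome_\tau} Y^{\gl}(\cdot)$, $\partial^2_{\ome_\tau} Y^{\gl}(\cdot)$ and passing to a uniform estimate via the $\sqrt{h}$ modulus of the driving diffusion and the Lipschitz/polynomial-growth data of \textbf{(H2)}. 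The measure-side derivatives are treated analogously through the linear BSDEs \eqref{s-e-u} and \eqref{e-durx2}, since along each time slice the law $\ML_{\eta_t}$ is held fixed and only the base-point shift in $t$ and $\tau$ must be absorbed, which delivers the required continuity in $d_{sp}$ of $\partial_{\mu_\tau}u$ and $\partial_{\tome_\tau}\partial_{\mu_\tau}u$ and completes the membership $u \in \FC^{0,2,1,1}_{s,p}(\hD)$.
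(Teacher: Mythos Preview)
Your overall architecture---identify each strong vertical derivative of $u$ with the time-$t$ value of the corresponding derivative BSDE from Section 3, then read off continuity and polynomial growth from the $\bS^p$-stability estimates there---is exactly the paper's approach, and your handling of continuity in $(\gam,\mu,\tome)$ at fixed $(\tau,t)$ via the Lipschitz bounds in Proposition \ref{s-bdd1}, Lemma \ref{drr}, Lemma \ref{s-reg-U}, and Lemma \ref{s-dxu} is correct.

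The gap is in your treatment of continuity in $(\tau,t)$. You propose to apply Corollary \ref{sito} to the derivative functionals $\partial_{\ome_\tau}Y^{\gl}(\cdot)$ and $\partial^2_{\ome_\tau}Y^{\gl}(\cdot)$, but Corollary \ref{sito} requires its input to lie in $\FC^{0,2,1,1}_{s,p}$; applying it to $\partial_{\ome_\tau}u$ would demand \emph{third}-order SVDs of $u$ in $\ome$ and additional mixed derivatives, none of which are available under \textbf{(H2)}. The flow identities of Proposition \ref{flow} do not rescue this---they are used only later, in Theorem \ref{mainthm}, to extract $\partial_t u$. The paper instead argues directly from the derivative BSDEs: to compare $\partial_{\ome_\tau}u(t,\gam,\mu)$ with $\partial_{\ome_{\tau'}}u(t',\gam',\mu')$ (say $t\ge t'$), one splits
\[
\bigl|\partial_{\ome_\tau}Y^{\gam_t,\eta_t}(t)-\partial_{\ome_{\tau'}}Y^{\gam'_{t'},\eta'_{t'}}(t)\bigr| + \bigl|\partial_{\ome_{\tau'}}Y^{\gam'_{t'},\eta'_{t'}}(t)-\partial_{\ome_{\tau'}}Y^{\gam'_{t'},\eta'_{t'}}(t')\bigr|.
\]
The second term is $O(\sqrt{t-t'})$ by ordinary time-regularity of the solution to the linear BSDE \eqref{s-e4}. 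For the first term one writes the linear BSDE satisfied by the difference on $[t,T]$, with terminal data $\partial_{\ome_\tau}\Phi(B^{\gam_t},\ML_{B^{\eta_t}})-\partial_{\ome_{\tau'}}\Phi(B^{\gam'_{t'}},\ML_{B^{\eta'_{t'}}})$ and forcing involving $\partial_{\ome_\tau}f-\partial_{\ome_{\tau'}}f$ and $(\partial_{(y,z)}f-\partial_{(y,z)}f')$, applies the estimate of Lemma \ref{linear}, and concludes via the continuity of $(\partial_{\ome_\tau}\Phi,\partial_{(\ome_\tau,y,z)}f)$ in $\tau$ built into \textbf{(H2)} together with dominated convergence. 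The same scheme handles $\partial^2_{\ome_\tau}u$, $\partial_{\mu_\tau}u$, and $\partial_{\tome_\tau}\partial_{\mu_\tau}u$ through their respective BSDEs; no It\^o formula on the derivatives is needed.
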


\begin{proof}

According to Lemma \ref{unibd}, $u(t,\gam, \mu)= Y^{\gl}(t)$ satisfies the polynomial growth condition in the sense of \eqref{polygrowth}. To prove that $u \in \FC_{p}$, we only need to show its continuity in $(t, \gam, \mu) \in \hD.$ For any $(t, \gam, \mu), (t', \gam', \mu')\in \hD,$ without loss of generality, assume $t \ge t'.$ We have
\begin{align} \nonumber
|  u(t, \gam, \mu)-   u(t', \gam', \mu')|&=|  Y^{\gam_{t}, \eta_{t}}(t)- Y^{\gam'_{t'}, \eta'_{t'}}(t') |\\  \nonumber
&\le  \E| Y^{\gam_{t}, \eta_{t}}(t)- Y^{\gam'_{t'}, \eta'_{t'}}(t)  | +\E| Y^{\gam'_{t'}, \eta'_{t'}}(t)- Y^{\gam'_{t'}, \eta'_{t'}}(t')  |\\  \nonumber
&\le   \E| Y^{\gam_{t}, \eta_{t}}(t)- Y^{\gam'_{t'}, \eta'_{t'}}(t)  | + \E| \int_{t'}^t f(\Theta^{\gamp_{t'},\etap_{t'} }_r, \ML_{\Theta^{\etap_{t'}}_r} ) dr-\int_{t'}^t Z'(r) dB(r)|\\ \label{162}  
&\le  \E| Y^{\gam_{t}, \eta_{t}}(t)- Y^{\gam'_{t'}, \eta'_{t'}}(t)  | + C(1+\|\gam_t\|+\|\eta_t\|_{\bS^2})(t-t')^{\frac12}.
\end{align}
It remains to prove $\E| Y^{\gam_{t}, \eta_{t}}(t)- Y^{\gam'_{t'}, \eta'_{t'}}(t)  | \rightarrow 0$ as $(t,\gam,\mu) \rightarrow (t',\gam', \mu').$ Set $Y':= Y^{\gam'_{t'}, \eta'_{t'}}$, $Y:=Y^\gl$, and $(\delta  Y, \delta Z):= (Y-Y', Z-Z')$, and omit subscripts $t$ and $t'.$
Then $(\delta Y, \delta Z)$ is the unique solution of BSDE
\beaa\nonumber
&& \delta Y (s) =  \Phi(B^{\gam}, \ML_{B^{\eta}})-\Phi(B^{\gam'}, \ML_{B^{\eta'}}) + \int_s^T [f(\Theta^{\gam,\eta}_r, \ML_{\Theta^{\eta}_r} ) - f(\Theta^{\gam',\eta'}_r, \ML_{\Theta^{\eta'}_r} ) ] dr
 - \int_s^T \delta Z(r) d B(r) \\
&&\ \ \ \ \ \ \ \ = : \delta \Phi + \int_s^T \Big( a_r \delta Y(r) + b_r \delta Z(r) + \tE[ \tilde{c}_r \delta \tY(r)] + \delta h_r \Big) dr  - \int_s^T \delta Z(r) d B(r) ,
\eeaa
where
\beaa
&&a_r:= \int_0^1 \partial_y {f} (B^{\gam}_r , Y'+ \theta(Y-Y'), Z, \ML_{\Theta^\eta_r} ) d\theta, \\
&&b_r:=  \int_0^1 \partial_z {f} (B^{\gam}_r , Y' , Z+\theta(Z-Z'), \ML_{\Theta^\eta_r} ) d\theta ,\\
&&\tilde{c}_r:= \int_0^1 \partial_{\nu} f ( B^{\gam}_r , Y' , Z', \ML_{B_r^{\eta}}, \ML_{Y^{\etap} + \theta(Y^\eta -Y^{\etap} )} ,   \tY^{\etap}+ \theta(\tY^\eta -\tY^{\etap} ) ) d \theta,\quad \text{and} \quad\\
&& \delta h_r: = f( B^{\gam}_r ,  Y', Z', \ML_{B^{\eta}_r} , \ML_{Y^{\etap}}  ) - f( B^{\gam'}_r ,  Y', Z', \ML_{B^{\eta'}_r} , \ML_{Y^{\etap}}  ) .
\eeaa
Applying Lemma \ref{linear} to the above BSDE, we have
\begin{align*}
 \| (\delta Y , \delta Z) \|_{\bS^2 \times \bH^2}^2 &\le C(\|\delta \Phi \|_{L^2}^2 + \|\int_t^T|\delta h_r| dr \|_{L^2}^2   )\\
&\le  C (\| \|B^{\gam}  - B^{\gam'} \| \|_{L^2}^2 + W_2(\ML_{B^\eta}, \ML_{B^{\etap}})^2 )\\
&\le  C (\|\gam_t -\gam'_{t'}\|^2 + W_2(\mu_t, \mu'_{t'})^2+ (t-t') ),
\end{align*}
where $C$ depends on $|||\mu_t|||+ |||\mu'_{t'}|||, $ $\|\gam_t\|$ and $ \|\gam'_{t'}\|,$ and thus the continuity of $u$ in view of \eqref{162}.

\ \ \ \ \ \ Since $u(t, \gam, \mu)= Y^{\gl}(t)=\hu_{\mu_t}(t,\gam),$ according to Proposition \ref{s-bdd1}, we see that for any $\tau \le t,$ $u(t, \gam, \mu)$ is twice strongly vertically differentiable at $(\tau,t,\gam)$, and moreover, $\partial_{\ome_\tau}u(t, \gam, \mu)=\partial_{\ome_\tau} Y^{\gl}(t)$ satisfies the polynomial growth condition. To show $ u(t, \gam, \mu) \in \FC_{s,p}^{0,1,0},$ we only need to prove that $\partial_{\gam_\tau}u(t,\gam,\mu)$ is continuous at any $(\tau,t,\gam,\mu).$ Indeed, for any $(\tau,t,\gam,\mu)$ and $(\tau',t',\gam',\mu')$ with $\tau \le t$, $\tau'\le t'$, denote solutions of equation \eqref{s-e4} corresponding to parameters $(\tau,t,\gam,\mu)$ and $(\tau',t',\gam',\mu')$ by
\be
(\MY,\MZ):=(\partial_{\ome_{\tau}}Y^{\gl}, \partial_{\ome_{\tau}}Z^{\gl} ),\ \ \   (\MY',\MZ'):=(\partial_{\ome_{\tau'}}Y^{\gam'_{t'},\eta'_{t'}}, \partial_{\ome_{\tau'}}Z^{\gam'_{t'},\eta'_{t'}} ).
\ee
Without loss of generality, let $t\ge t'$. By inserting the term $\MY'(t)$ and applying Proposition \ref{s-bdd1}, we have
\begin{align*}
&|  \partial_{\ome_\tau}u(t, \gam, \mu)-   \partial_{\ome_{\tau'}} u(t', \gam', \mu')|\\
&\quad =|  \partial_{\ome_\tau}Y^{\gam_{t}, \eta_{t}}(t)- \partial_{\ome_{\tau'}}Y^{\gam'_{t'}, \eta'_{t'}}(t') |\\
&\quad \le  \E| \partial_{\ome_\tau}Y^{\gam_{t}, \eta_{t}}(t)- \partial_{\ome_{\tau'}}Y^{\gam'_{t'}, \eta'_{t'}}(t)  | +\E| \partial_{\ome_{\tau'}}Y^{\gam'_{t'}, \eta'_{t'}}(t)- \partial_{\ome_{\tau'}}Y^{\gam'_{t'}, \eta'_{t'}}(t')  |\\
&\quad \le  \E| \partial_{\ome_\tau}Y^{\gam_{t}, \eta_{t}}(t)- \partial_{\ome_{\tau'}}Y^{\gam'_{t'}, \eta'_{t'}}(t)  | +  C(t-t')^{\frac12}.
\end{align*}
Set $(\delta \MY, \delta \MZ):=(\MY-\MY', \MZ-\MZ')$.  We see that $(\delta \MY, \delta \MZ)$ is the unique solution of the following BSDE
\begin{align*}
\delta \MY(s)=&\ [\partial_{\ome_\tau}\Phi- \partial_{\ome_{\tau'}}\Phi']+ \int_s^T[\partial_{\ome_\tau}f- \partial_{\ome_{\tau'}}f']dr + \int_s^T \partial_y f \delta \MY(r) dr\\
&+ \int_s^T (\partial_y f -\partial_y f')\MY'(r) dr  + \int_s^T (\partial_z f)^T \delta \MZ(r) d r+ \int_s^T (\partial_z f -\partial_z f')^T \MZ'(r) dr \\
&- \int_s^T \delta \MZ dB(r),
\end{align*}
where
\begin{align*}
&\partial_{\ome_\tau}\Phi:= \partial_{\ome_\tau}\Phi(B^{\gam_t}, \ML_{B^{\eta_t}}), \quad
\partial_{\ome_{\tau'}}\Phi':= \partial_{\ome_{\tau'}}\Phi(B^{\gam'_{t'}}, \ML_{B^{\eta'_{t'}}}),\quad
\partial_{(\ome_\tau,y,z)}f:= \partial_{(\ome_\tau,y,z)}f(\Theta^{\gl}, \ML_{\Theta^{\eta_t}}),\\
 & \quad \text{and} \quad \partial_{(\ome_{\tau'},y,z)}f':= \partial_{(\ome_{\tau'},y,z)}f(\Theta^{\gam'_{t'}, \eta'_{t'}}, \ML_{\Theta^{\eta'_{t'}}}).
 \end{align*}
In view of estimates in Lemma \ref{linear} and Proposition \ref{s-bdd1}, using Cauchy inequality, we have
\begin{align*}
|\MY(t)-\MY'(t)|^2 &\le \Big[\| \partial_{\ome_\tau}\Phi- \partial_{\ome_{\tau'}}\Phi'   \|^2_{L^2} + \| \int_t^T[\partial_{\ome_\tau}f- \partial_{\ome_{\tau'}}f']dr  \|^2_{L^2} \\
&\ \ \ \  + \| \int_t^T[\partial_{y}f- \partial_{y}f']\MY'dr  \|^2_{L^2} + \| \int_t^T[\partial_{z}f- \partial_{z}f']\MZ'dr  \|^2_{L^2} \Big]\\
&\le  \Big[\| \partial_{\ome_\tau}\Phi- \partial_{\ome_{\tau'}}\Phi'   \|^2_{L^2} +  \E[\int_t^T|\partial_{\ome_\tau}f- \partial_{\ome_{\tau'}}f'|^2dr ] \\
&\ \ \ \  + \E[ \int_t^T|\partial_{y}f- \partial_{y}f'|^4dr ] + \E[ \int_t^T|\partial_{z}f- \partial_{z}f'|^4dr ] \Big].
\end{align*}
Then the desired continuity follows from that of $(\partial_{\ome_\tau}\Phi,\partial_{(\ome_\tau,y,z)}f )$ and the bounded convergence theorem.
Similarly, we have $\partial_{\ome_\tau}u \in \FC_{s,p}^{0,1,0}$ and therefore $u\in \FC_{s,p}^{0,2,0}$.

\ \ \ \ \ \ For the differentiability with respect to the measure variable, according to Lemmas \ref{s-gat1} and \ref{s-dxu}, we have that for any $x\in \D,$
\be
\partial_{\mu_\tau} u(t, \gam , \mu, x) = \partial_{\mu_\tau} Y^{\gam_t, \mu_t, x_t }(t),\ \quad
\partial_{\tome_\tau} \partial_{\mu_\tau}u(t,\gam,\mu,x)=\partial_{\tome_\tau} \partial_{\mu_\tau} Y^{\gam_t, \mu_t, x_t }(t).
\ee
Here, $\partial_{\mu_\tau} Y^{\gam_t, \mu_t, x_t }$ solves BSDE \eqref{s-e-u} and $\partial_{\tome_\tau} \partial_{\mu_\tau} Y^{\gam_t, \mu_t, x_t }$ solves BSDE \eqref{e-durx2}. Following a similar argument as above, we see that $  u(t, \gam , \mu ) \in \FC^{0,2,1,1}_{s,p}.$

\end{proof}

\subsection{Solution of BSDEs as solution of path-dependent mean-field PDEs}\label{sol-to}
In this subsection we consider well-posedness of the path-dependent mean-field PDE
\bea\label{pde}
\left\{
\begin{array}{l}
\partial_t u(t,\gam, \mu ) + \frac12 \text{Tr} \left[ \partial_{\ome}^2 u(t,\gam,\mu)\right] + \frac12\text{Tr} \left[  \int_{\mC} \partial_{\tome} \partial_\mu u(t,\gam, \mu, \tome )\mu(d\tome)\right] \\[2mm]
\ \ \ + f(t,\gam, u(t,\gam,\mu), \partial_\ome u(t,\gam,\mu), \mu, \ML_{u(t,W^\mu,\mu)})=0,\\
 \\
 u(T, \gam, \mu)= \Phi(\gam_T, \mu_T), \quad \quad (t,\gam,\mu) \in [0,T] \times \mC \times \op_2^C,
\end{array}
\right.
\eea
where we recall that $W^\mu$ is the canonical process under $\mu.$ In applications, $(\gam, \mu)$ takes values in $\mC \times \op_2^C.$ Thus we need to give a description of equation \eqref{pde} restricted on $\mC \times \op_2^C.$ Denote by $\hC$ the product space $[0,T]\times \mC \times \op_2^C,$ and for a $n\times n$ matrix $A,$ we write $\text{Sym}(A):= \frac12 (A+ A^T).$ For  any $f\in \FD$, we write $ (\partial_{\ome_{\tau}}, \partial_{\ome_{\tau}}^2, \partial_{\mu_\tau}, \partial_{\tome_\tau}\partial_{\mu_\tau}) f:=(\partial_{\ome_\tau} f, \partial_{\ome_{\tau}}^2 f, \partial_{\mu_\tau} f, \partial_{\tome_\tau}\partial_{\mu_\tau}f )$ if the right hand side exists.

\begin{defi}\label{f-ext} Denote by $\FC^{1,2,1,1}_{s,p}(\hC)$ the set of functionals
 $f :\hC \mapsto \R$ such that there exists an extension $F \in \FC^{1,2,1,1}_{s,p}(\hD)$ with $f=F$ on $\hC$. In this case, for any $(t,\ome,\mu,\tome)\in \hC \times \mC $ and $\tau \le t,$ we write
\be\label{derivatives}
\begin{split}
&\partial_t f(t,\ome,\mu):= \partial_t F(t,\ome,\mu), \  (\partial_{\ome_{\tau}}, \partial_{\ome_{\tau}}^2) f(t,\ome_\tau ,\mu):=  (\partial_{\ome_{\tau}}, \partial_{\ome_{\tau}}^2)  F(t,\ome_\tau ,\mu),\
\\
& \quad \text{and} \quad  (\partial_{\mu_\tau}, \partial_{\tome_\tau}\partial_{\mu_{\tau}} ) f(t,\ome,\mu_{\tau},\tome_{\tau}):= (\partial_{\mu_\tau}, \partial_{\tome_\tau}\partial_{\mu_{\tau}} )  F(t,\ome,\mu_{\tau},\tome_{\tau}).\\ 
\end{split}
\ee
Notations such as $\FC^{1,2,1,1}(\hC), \FC^{1,2,1,1}_{p}(\hC)$ and $ \FC^{0,2,1,1}_{s,p}(\hC)$ are defined in a similar way.
\end{defi}

\ \ \ \ \ \ In view of It\^o-Dupire formulas given in Theorem \ref{itoformula} and Corollary \ref{sito}, we have

\begin{coro}

 For any $(t, \gam , \mu, \eta)\in   \hC \times \MM_2^C,$ $X$ and $ X'$ are diffusion processes given by \eqref{X} and \eqref{X'} respectively.

\begin{itemize}
\item[(i)] Suppose that $f\in \FC^{1,2,1,1}_{s}(\hC)$ ($\FC^{1,2,1,1} (\hC)$, resp.). For any $(\tau,\tome) \in [0,t] \times \mC ,$ derivatives
$
    \partial_t f(t,\ome,\mu) ,\ (\partial_{\ome_\tau}, \partial_{\ome_\tau}^2 )f(t,\ome_\tau ,\mu),\ \partial_{\mu_\tau}f(t,\ome  ,\mu_\tau,\tome_\tau), \   \text{and } \text{Sym}(\partial_{\tome_\tau}\partial_{\mu_\tau}f(t,\ome  ,\mu_\tau, \tome_\tau)),
$
($\partial_t f,\ (\partial_{\ome}, \partial_{\ome}^2 )f,\ \partial_{\mu}f,\   \text{and }  \text{Sym}(\partial_{\tome}\partial_{\mu}f)$, resp.)
defined as in \eqref{derivatives} do not depend on the choice of the extended functional.
 
\item[(ii)] Suppose that $f\in \FC^{1,2,1,1}_{p}(\hC).$ For any $s\ge t,  $ we have
\be  \label{e-ito'}
\begin{split}
&f(s , X , \ML_{X'})- f(t,\gam, \ML_{\eta } )\\
&\ \  = \int_t^s \partial_{r}f(r, X, \ML_{X'}) dr + \int_t^s \partial_{\ome} f(r, X, \ML_{X'} ) d X(r)\\
 &\ \ \ \ \ \ + \frac12  \int_t^{s } \text{Tr}\ [\partial_{\ome}^2 f(r, X , \ML_{X'} ) d \lan X \ran (r) ]+ \E^{\tilde{P}'} [\int_t^s \partial_{\mu} f(r, X , \ML_{X'},\tilde{X}' )d \tilde{X}'(r) ] \\
 &\ \ \ \ \ \ +  \frac12 \E^{\tilde{P}'} \int_t^{s } \text{Tr}\ [\partial_{\tome} \partial_\mu f(r, X, \ML_{X'} , \tilde{X}' ) \tilde{d}(r) \tilde{d}(r)^T ] dr.
 \end{split}
\ee

\item[(iii)] Suppose that $f\in \FC^{0,2,1,1}_{s,p}(\hC).$ For any $t\le s \le s' ,  $ we have the partial It\^o-Dupire formula
\be \label{se-ito'}
\begin{split}
&f(s' , X_s, \ML_{X'_s})- f(s',\gam_t, \ML_{\eta_t } )\\
&\ \  =  \int_t^s \partial_{\ome_r} f(s', X_r, \ML_{X'_r} ) d X(r)\\
 &\ \ \ \ \ \ + \frac12  \int_t^{s } \text{Tr}\ [\partial_{\ome_r}^2 f(s', X_{r}, \ML_{X'_r} ) d \lan X \ran(r) ]+ \E^{\tilde{P}'} [\int_t^s \partial_{\mu_r} f(s', X_r , \ML_{X'_r}, \tilde{X}'_r )d \tilde{X}'(r) ] \\
 &\ \ \ \ \ \ +  \frac12 \E^{\tilde{P}'} \int_t^{s } \text{Tr}\ [\partial_{\tome_r} \partial_{\mu_r} f(s', X_r, \ML_{X'_r} , \tilde{X}'_r ) \tilde{d}(r) \tilde{d}(r)^T ] dr.
 \end{split}
\ee

\end{itemize}

\end{coro}

\begin{proof}
Since for any $n \times n$ matrix $A$ and symmetric $n\times n$ matrix $B$, $\text{Tr}[AB]$ depends only on Sym$(A)$,
$(ii)$ and $(iii)$ follow from $(i)$, Theorem \ref{itoformula} and Corollary \ref{sito} directly. 
To end the proof, we only need to show $(i)$. Indeed, the uniqueness of $\partial_t f(t,\gam,\mu)$ follows from its definition. For the uniqueness of $(\partial_{\gam_\tau},\partial_{\gam_\tau}^2)f (t,\gam_\tau, \mu)$, without loss of generality, assume $\tau=t.$ Otherwise consider the non-anticipative path-dependent function $\tf_{\mu_t}(\tau,\gam ):= f(t,\gam_\tau, \mu )$ instead of $f(t,\gam_\tau, \mu ).$ For any $(\gam, \mu) \in \mC \times \op_2^C,$ take $c(\cdot)=d(\cdot)=0$ in equation \eqref{X'} for $X'$ and $a=0$ in equation \eqref{X} for X. For any extension $F$ of $f,$ applying It\^o formula \eqref{e-ito} to $F(s,X, \ML_{\eta_t})$ on $s\in [t,T]$, we have
\be\label{d-b}
\begin{split}
f(T , X, \ML_{\eta_t})- f(t, \gam_t, \ML_{\eta_t})=& \ \int_t^{T} \partial_r f(r,X, \ML_{\eta_t}) dr + \int_{t}^{T} \partial_{\ome_r} F(r,X, \ML_{\eta_t})b(r) dB(r)\\
&\ \ + \frac12 \int_t^{T}  \text{Tr}\ [ \partial_{\ome_r}^2 F(r,X, \ML_{\eta_t})b(r)b(r)^T] dr.
\end{split}
\ee
In view of identity \eqref{d-b} and the Doob-Meyer theorem for semimartingales, we obtain the uniqueness of $\partial_{\ome_t} F(t,\gam_t,\mu_t)$ and $\text{Sym}(\partial_{\ome_t}^2 F(t,\gam_t, \mu_t))$. For the uniqueness of $\partial_{\mu_\tau}f$ and $\text{Sym}(\partial_{\tome_{\tau}}\partial_{\mu_\tau}f)$, again we assume $\tau=t$. Otherwise consider $\bar{f}_{\ome_t}(\tau,\mu)=f(t,\ome,\mu_\tau)$, and then by definition
$$
\partial_{\mu_\tau}f(t,\ome,\mu_\tau,\tome_{\tau})= \partial_{\mu_\tau} \bar{f}_{\ome_t}(\tau,\mu,\tome),\ \
\text{Sym}(\partial_{\tome_{\tau}}\partial_{\mu_\tau}f(t,\ome,\mu_\tau,\tome_{\tau}))= \text{Sym}(\partial_{\tome_{\tau}}\partial_{\mu_\tau} \bar{f}_{\ome_t}(\tau,\mu,\tome)).
$$
Then the uniqueness of $\partial_\mu f $ and $\text{Sym}(\partial_{\tome}\partial_\mu f) $ follows from \cite[Theorem 2.9]{WZ20}.

\end{proof}

\begin{rem}
The uniqueness of $\partial_{\tome}\partial_\mu f$ can be proved via a similar argument as above from the uniqueness of $\partial_{\mu} f$ under a stronger assumption on the regularity of $f$. However, our It\^o-Dupire formulas and analysis below only depend on $\text{Sym}(\partial_{\tome}\partial_\mu f)$. Indeed, equation \eqref{pde} also only depends on $\text{Sym}(\partial_{\tome}\partial_\mu u)$ instead of $\partial_{\tome}\partial_\mu u.$

\end{rem}

\ \ \ \ \ \ For the case $(\gam,\mu) \in  \mC \times \op_2^C$, we assume
\begin{itemize}
\item[ $\textbf{(A2)}$] $\Phi:\mC \times \op_2^C \mapsto \R$ such that there exists $\Phi'$ satisfying {\bf(H2)}(i) and $\Phi=\Phi'$ on $\mC \times \op_2^C$. $f : [0,T] \times \mC \times \R \times \R^d \times \op_2^C \times \op_2(\R) \mapsto \R$ such that there exists $f'$ satisfying {\bf(H2)}(ii) and $f=f'$ on $[0,T] \times \mC \times \R \times \R^d \times \op_2^C \times \op_2(\R).$\\

\end{itemize}

\ \ \ \ \ \ A functional $u\in \FC^{1,2,1,1}_p(\hC)$ is called a classical solution to equation \eqref{pde} if it satisfies equation \eqref{pde}. The following theorem states the uniqueness of solutions for equation \eqref{pde}.

\begin{thm}
Suppose that $u^1$ and $ u^2$ are two classical solutions to the path-dependent master equation \eqref{pde}. Then $u^1=u^2$.

\end{thm}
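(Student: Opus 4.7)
The plan is to verify that along a suitable pair of Brownian flows, both $u^1$ and $u^2$ produce solutions of the FBSDE system \eqref{e1'}--\eqref{e2'}, and then to invoke the well-posedness of that system. Fix $(t,\gam,\mu)\in \hC$ and choose $\eta\in \MM_2^C$ with $\ML_{\eta}=\mu$. Let $X$ be Brownian motion on $(\Omega,\MF,P)$ with $X_t=\gam_t$ and $X'$ be Brownian motion on an independent copy $(\Omega',\MF',P')$ with $X'_t=\eta_t$. Since $u^i \in \FC^{1,2,1,1}_p(\hC)$, formula \eqref{e-ito'} applies to $u^i(s,X,\ML_{X'})$ on $s\in[t,T]$. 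Using that $X'$ has zero drift under $P'$, the integral $\E^{\tilde P'}[\int_t^s \partial_\mu u^i(r,X,\ML_{X'},\tilde X')\,d\tilde X'(r)]$ is the $\tilde P'$-expectation of a genuine $\tilde P'$-martingale and therefore vanishes. Substituting the master equation \eqref{pde} into the drift then shows that $(Y^i(s),Z^i(s)):=(u^i(s,X,\ML_{X'}),\partial_{\ome}u^i(s,X,\ML_{X'}))$ satisfies
\begin{align*}
Y^i(s)=\Phi(X_T,\ML_{X'_T})+\int_s^T f\!\left(r,X,Y^i(r),Z^i(r),\ML_{X'},\ML_{u^i(r,X',\ML_{X'_r})}\right)dr-\int_s^T Z^i(r)\,dB(r),
\end{align*}
where we have identified $\ML_{u^i(r,W^{\mu_r},\mu_r)}=\ML_{u^i(r,X'_r,\ML_{X'_r})}$ because $X'$ has law $\mu$.

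Next, apply the same It\^o--Dupire computation to $u^i(s,X',\ML_{X'})$ on the primed space. The resulting pair $(\MY^i(s),\MZ^i(s)):=(u^i(s,X',\ML_{X'}),\partial_{\ome}u^i(s,X',\ML_{X'}))$ satisfies the mean-field BSDE
\begin{align*}
\MY^i(s)=\Phi(X'_T,\ML_{X'_T})+\int_s^T f\!\left(r,X',\MY^i(r),\MZ^i(r),\ML_{X'},\ML_{\MY^i(r)}\right)dr-\int_s^T \MZ^i(r)\,dB'(r),
\end{align*}
which is precisely equation \eqref{e1'} driven by $B'$ with initial datum $\eta$. Under Assumption \textbf{(A2)}, this mean-field BSDE is well-posed (cf.\ \cite[Theorem 4.23]{CD1}), so $\MY^1\equiv \MY^2$ and hence $\ML_{u^1(r,X',\ML_{X'})}=\ML_{u^2(r,X',\ML_{X'})}$ for every $r\in[t,T]$.

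Plugging this common measure back into the BSDE for $Y^i$ obtained in the first paragraph, both $(Y^1,Z^1)$ and $(Y^2,Z^2)$ become solutions of one and the same classical path-dependent Lipschitz BSDE with the same terminal condition. Uniqueness for such BSDEs forces $Y^1(s)=Y^2(s)$ for all $s\in[t,T]$; evaluating at $s=t$ (where $X_t=\gam_t$) yields $u^1(t,\gam,\mu)=u^2(t,\gam,\mu)$, and since $(t,\gam,\mu)\in\hC$ was arbitrary, we conclude $u^1\equiv u^2$. The main technical obstacle is the rigorous justification that the $d\tilde X'$--integral in the It\^o--Dupire expansion contributes nothing: one must check that the polynomial growth of $\partial_\mu u^i$ built into $\FC^{1,2,1,1}_p$, combined with the finite moments of the Brownian flow $X'$, ensures the integrand is $\tilde P'$--square-integrable so that the stochastic integral is a true martingale. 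Once this is established, the remainder is a clean application of the uniqueness theory for (mean-field and classical) path-dependent BSDEs already developed in the preceding sections.
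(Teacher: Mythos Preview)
Your proposal is correct and follows essentially the same approach as the paper: apply the It\^o--Dupire formula \eqref{e-ito'} to $u^i$ along the Brownian flows to show that each classical solution generates a solution pair of the FBSDE system \eqref{e1'}--\eqref{e2'}, and then conclude by uniqueness of that system. Your write-up is slightly more explicit than the paper's in separating the mean-field step from the classical BSDE step and in flagging the integrability needed for the $d\tilde X'$-integral to vanish, but the argument is the same.
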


\begin{proof}

Apply It\^o formula \eqref{e-ito'} to $u_i(r,B^{\gam_t}, \ML_{\tilde{B}^{\teta_t}})$ on $r \in[t,s],$ $i=1,2,$ and we obtain that
\begin{equation*}
\begin{split}
du_i(r,B^{\gam_t}, \ML_{\tilde{B}^{\teta_t}} )=&\ \partial_r u_i(r,B^{\gam_t}, \ML_{\tilde{B}^{\teta_t}} ) dr + \partial_{\ome}u_i(r,B^{\gam_t}, \ML_{\tilde{B}^{\teta_t}}) dB(r)+ \frac12 \text{Tr} [ \partial^2_{\ome} u_i(r,B^{\gam_t}, \ML_{\tilde{B}^{\teta_t}})] dr\\
& + \frac12 \tE\left[  \text{Tr}  [\partial_{\tome}\partial_{\mu} u_i(r,B^{\gam_t}, \ML_{\tilde{B}^{\teta_t}}, \tilde{B}^{\teta_t} ) d r ]\right].
\end{split}
\end{equation*}
In view of equation \eqref{pde}, we have
\begin{equation*}
\begin{split}
du_i(r,B^{\gam_t}, \ML_{\tilde{B}^{\teta_t}} )=& \ -f(r,B^{\gam_t},u_i(r,B^{\gam_t}, \ML_{\tilde{B}^{\teta_t}}), \partial_\ome u_i(r,B^{\gam_t}, \ML_{\tilde{B}^{\teta_t}}), \ML_{\tilde{B}^{\teta_t}}, \ML_{u_i(r, {B}^{\eta_t}, \ML_{\tilde{B}^{\teta_t}} )} )dr \\
&\  + \partial_{\ome}u_i(r,B^{\gam_t}, \ML_{\tilde{B}^{\teta_t}})dB(r).
\end{split}
\end{equation*}
Then processes $(Y^{\gl}, Z^{\gl})$ and $(Y^{\eta_t},Z^{\eta_t})$ given by
\beaa
&&(Y^{\gl}(s),Z^{\gl}(s)):= (u_i(s,B^{\gam_t}, \ML_{\tilde{B}^{\teta_t}}), \partial_{\ome}u_i(s,B^{\gam_t}, \ML_{\tilde{B}^{\teta_t}}) ), \quad \text{and}\\
&&(Y^{\eta_t}(s),Z^{\eta_t}(s)):= (u_i(s,B^{\eta_t}, \ML_{\tilde{B}^{\teta_t}}), \partial_{\ome}u_i(s,B^{\eta_t}, \ML_{\tilde{B}^{\teta_t}}) ), \ \ \ \ s \ge t,
\eeaa
define solutions to equations \eqref{e2'} and \eqref{e1'}, respectively.
By the uniqueness of solutions for BSDEs \eqref{e2'} and \eqref{e1'}, our conclusion follows.

\end{proof}

%
%
%
%
%
%

\ \ \ \ \ \  Now we show the existence of a classical solution to \eqref{pde} via FBSDEs.

\begin{thm}\label{mainthm}
Suppose that $(f, \Phi)$ satisfies Assumption {\bf(A2)} and $u$ is given by \eqref{decop}. Then $ u  $ restricted on $\hC$ is a classical solution of \eqref{pde}.

\end{thm}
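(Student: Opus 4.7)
The plan is to combine three ingredients: the flow identity $u(s, B^{\gam_t}_s, \ML_{B^{\eta_t}_s}) = Y^{\gl}(s)$ for $s\ge t$ from Proposition~\ref{flow}; the BSDE \eqref{e2'} describing the dynamics of $Y^{\gl}$; and the partial It\^o--Dupire formula (Corollary~\ref{sito}) applied to the functional $u\in \FC^{0,2,1,1}_{s,p}(\hD)$ produced by Proposition~\ref{regular}. Matching the two dynamics for $Y^{\gl}(s) = u(s, B^{\gam_t}_s, \ML_{B^{\eta_t}_s})$ over a small time interval $[t,t+h]$ will simultaneously furnish the horizontal derivative $\partial_t u$ and identify the right-hand side of \eqref{pde}.

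Fix $(t,\gam,\mu)\in\hC$, pick $\eta\in\MM^C_2$ with $\ML_\eta = \mu$, and let $(\tilde B,\tilde\eta)$ on $(\tilde\Omega,\tilde\MF,\tilde P)$ be an independent copy of $(B,\eta)$. Write $X := B^{\gam_t}$, $X' := B^{\eta_t}$, $\tilde X' := \tilde B^{\tilde\eta_t}$. For small $h>0$, BSDE \eqref{e2'} yields
\begin{equation*}
Y^{\gl}(t+h) - Y^{\gl}(t) = -\int_t^{t+h} f\bigl(X_r, Y^{\gl}(r), Z^{\gl}(r), \ML_{X'_r}, \ML_{Y^{\eta_t}(r)}\bigr)\, dr + \int_t^{t+h} Z^{\gl}(r)\, dB(r),
\end{equation*}
whose left-hand side equals $u(t+h, X_{t+h}, \ML_{X'_{t+h}}) - u(t,\gam_t,\mu_t)$ by the flow identity. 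On the other hand, applying Corollary~\ref{sito} to $u(t+h,\cdot,\cdot)\in \FC^{0,2,1,1}_{s,p}$ on $[t,t+h]$ (with $dX=dB$, $d\langle X\rangle = I\,dr$, $d\tilde X' = d\tilde B$, $\tilde d=I$) gives
\begin{align*}
u(t+h, X_{t+h}, \ML_{X'_{t+h}}) - u(t+h, \gam_t, \mu_t)
=\,& \int_t^{t+h} \partial_{\ome_r} u(t+h, X_r, \ML_{X'_r})\, dB(r) \\
& + \tfrac12 \int_t^{t+h}\text{Tr}\bigl[\partial_{\ome_r}^2 u(t+h, X_r, \ML_{X'_r})\bigr] dr \\
& + \tE \int_t^{t+h} \partial_{\mu_r} u(t+h, X_r, \ML_{X'_r}, \tilde X'_r)\, d\tilde B(r) \\
& + \tfrac12\tE \int_t^{t+h}\text{Tr}\bigl[\partial_{\tome_r}\partial_{\mu_r} u(t+h, X_r, \ML_{X'_r}, \tilde X'_r)\bigr] dr.
\end{align*}
Subtracting isolates the deterministic quantity $u(t+h,\gam_t,\mu_t) - u(t,\gam_t,\mu_t)$; taking $\E\otimes\tE$ annihilates the $dB$ and $d\tilde B$ martingales and leaves only Lebesgue integrals.

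Dividing by $h$ and sending $h\to 0^+$, a Lebesgue-point argument then yields the right-derivative
\begin{align*}
\partial_t u(t,\gam,\mu)
= \,& -\tfrac12\,\text{Tr}\bigl[\partial_\ome^2 u(t,\gam,\mu)\bigr] - \tfrac12\, \tE\,\text{Tr}\bigl[\partial_{\tome}\partial_\mu u(t,\gam,\mu,\tilde\eta_t)\bigr] \\
& - f\bigl(t,\gam, u(t,\gam,\mu), \partial_\ome u(t,\gam,\mu), \mu, \ML_{Y^{\eta_t}(t)}\bigr),
\end{align*}
where I use $Y^{\gl}(t) = u(t,\gam,\mu)$ together with the Dupire identity $Z^{\gl}(t) = \partial_\ome u(t,\gam,\mu)$ implicit in \eqref{uflow} at $v=s=t$. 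Since Proposition~\ref{flow} gives $Y^{\eta_t}(t) = u(t,\eta_t,\mu_t)$ and $\eta$ has law $\mu = \ML_{W^\mu}$, the term $\ML_{Y^{\eta_t}(t)} = \ML_{u(t,W^\mu,\mu)}$ matches \eqref{pde} exactly (recalling $(b_i,\sigma_i)=(0,I)$ in the normalised form). The terminal condition $u(T,\gam,\mu) = \Phi(\gam_T,\mu_T)$ is immediate from the BSDE terminal value.

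The main obstacle will be the rigorous justification of the $h\to 0^+$ passage, namely that the normalised time-averages $h^{-1}\int_t^{t+h}(\cdots)\,dr$ actually converge to the integrand evaluated at $r=t$. This needs uniform integrability, which I will obtain from the polynomial-growth condition built into the class $\FC^{0,2,1,1}_{s,p}$ together with the a priori estimates on solutions and derivatives of the FBSDEs in Lemmas~\ref{unibd}, \ref{bdd1}, \ref{drr} and \ref{dxu}, invoking dominated convergence. Once this is established, the identity above exhibits $\partial_t u$ as a continuous function of $(t,\gam,\mu)$, which places $u$ in $\FC^{1,2,1,1}_{p}(\hC)$ in the sense of Definition~\ref{f-ext} and completes the proof that $u$ is a classical solution of \eqref{pde}.
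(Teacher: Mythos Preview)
Your proposal is correct and follows essentially the same route as the paper: decompose $u(t+h,\gam_t,\mu_t)-u(t,\gam,\mu)$ by inserting $u(t+h,B^{\gam_t}_{t+h},\ML_{B^{\eta_t}_{t+h}})$, handle one piece with the partial It\^o--Dupire formula (Corollary~\ref{sito}) and the other with the BSDE dynamics \eqref{e2'} via the flow identity from Proposition~\ref{flow}, take expectation to remove martingale terms, divide by $h$, and pass to the limit using dominated convergence together with the continuity of the strong vertical derivatives furnished by Proposition~\ref{regular}. The paper's proof is organised in precisely this way (see \eqref{u-dec}--\eqref{du}).
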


\begin{proof}

In view of Proposition \ref{regular}, we have $u\in \FC_{s,p}^{0,2,1,1,}(\hC).$ For any $(t,\gam, \mu)\in \hC$
 and $h > 0,$
\be\label{u-dec}
\begin{split}
&u(t+h,\gam_t,\mu_t)-u(t,\gam,\mu) \\
&\ \ \ = u(t+h,\gam_t,\mu_t)
-\E\left[ u(t+h,B^{\gam_t},\ML_{B^{\eta_t}})\right]  + \E\left[ u(t+h,B^{\gam_t},\ML_{B^{\eta_t}})\right] -u(t,\gam,\mu).
\end{split}
\ee

Applying partial It\^o formula \eqref{se-ito'} to $u(t+h,B^{\gam_t}_r,\ML_{B^{\eta_t}_r})$ on $r\in [t,t+h]$, we have
\be
\begin{split}\label{u-1}
&u(t+h,\gam_t,\mu_t)- u(t+h,B^{\gam_t},\ML_{B^{\eta_t}})\\
&\ \ \ = -\int_t^{t+h} \partial_{\ome_r} u(t+h,B^{\gam_t}_r, \ML_{B^{\eta_t}_r} ) dB(r) - \frac12 \int_t^{t+h} \text{Tr}\ [ \partial_{\ome_r}^2 u(t+h,B^{\gam_t}_r,\ML_{B^{\eta_t}_r} ) ] dr\\
&\ \ \ \ \ \  - \frac12 \tE  \int_t^{t+h}\text{Tr}\ [ \partial_{\tome_r}\partial_{\mu_r} u(t+h,B^{\gam_t}_r,\ML_{B^{\eta_t}_r},\tB^{\teta_t}_r ) ]dr . \end{split}
\ee
On the other hand, in view of identities \eqref{us=hu}, \eqref{uflow}, and BSDE \eqref{e2'}
\be\label{u-2}
\begin{split}
&u(t+h,B^{\gam_t},\ML_{B^{\eta_t}}) -u(t,\gam,\mu) = Y^{\gl}(t+h)-Y^\gl(t)\\
&\quad = -\int_t^{t+h} f(r,B^{\gam_t}, Y^{\gl},Z^{\gl},\ML_{B^{\eta_t}}, \ML_{Y^{\eta_t}}) dr + \int_t^{t+h} Z^\gl(r)dB(r).
\end{split}
\ee
Putting \eqref{u-1} and \eqref{u-2} to \eqref{u-dec}, and taking expectation $\E$, we obtain  
\be \label{du}
\begin{split}
u(t+h,\gam_t,\mu_t)-u(t,\gam,\mu)&= - \frac12 \int_t^{t+h} \E\left[ \text{Tr} [ \partial_{\ome_r}^2 u(t+h,B^{\gam_t}_r,\ML_{B^{\eta_t}_r} ) ]\right] dr\\
 &\ \ \ - \frac12 \int_t^{t+h} \E\left[  \tE \text{Tr}[ \partial_{\tome_r}\partial_{\mu_r} u(t+h,B^{\gam_t}_r,\ML_{B^{\eta_t}_r},\tB^{\teta_t}_r ) ]\right] dr \\
 &\ \ \  - \int_t^{t+h} \E\left[  f(r,B^{\gam_t}, Y^{\gl},Z^{\gl},\ML_{B^{\eta_t}}, \ML_{Y^{\eta_t}})\right] dr.
 \end{split}
\ee
Moreover, in view of \eqref{uflow}, \eqref{us=hu} and \eqref{us=ys}, we have  
\bea
&&Y^{\gl}(r)=u(r,B^{\gam_t}, \ML_{B^{\eta_t}}), \ Z^{\gl}({r})= \partial_{\gam_t} u(r,B^{\ome_t}, \ML_{B^{\eta_t}}),\\
&&\quad \text{and } \quad Y^{\eta_t}(r)= u(r, B^{\eta_t}, \ML_{B^{\eta_t}}).
\eea
Then dividing both sides of \eqref{du} by $h$ and taking $h\rightarrow 0^+$, according to the dominated convergence theorem and Proposition \ref{regular}, we obtain  
\begin{align*}
\partial_t u(t,\gam ,\mu)&= - \frac12 \text{Tr}\ \left[ \partial_{\ome}^2 u(t,\gam,\mu)\right] - \frac12  \text{Tr} \left[ \E^{\mu}[\partial_{\tome} \partial_\mu u(t,\gam, \mu, W )]\right]\\
&\ \ \ - f\left(t,\gam, u(t,\gam,\mu), \partial_\ome u(t,\gam,\mu), \mu, \ML_{u(t,B^{\eta_t},\mu)}\right).
\end{align*}

\end{proof}

\subsection{Classical solution of semi-linear path-dependent PDEs}\label{semi}

\ \ \ \ \ \ As stated in the introduction, a classical solution to a semi-linear equation \eqref{1} suffers from several problems if one tries to build the solution via the classical argument of FBSDE theory. In the following, we approximate the classical solution to \eqref{1} via a sequence of solutions to corresponding ``coefficient frozen'' equations. For simplicity of technique and notations, here we only consider the measure independent case and assume $f$ is independent of $z$-variable, which is also new even restricted in the path-dependent setting,
\be\label{semieq}
\left\{
\begin{array}{l}
\partial_t u(t,\ome ) + \frac12 \text{Tr}\ [ \partial_{\ome}^2 u(t,\ome )\sigma (\ome_t)\sigma(\ome_t)^T]  + \partial_\ome u(t,\ome ) b(\ome_t)+ f(t,\ome, u(t,\ome ))=0,\\
 \\
 u(T, \ome, \mu)= \Phi(\ome ), \ \ \ (t,\ome )\in [0,T]\times \mC .
\end{array}
\right.
\ee 
However, see Remark \ref{gsemieq} for the general and mean-field case. 
Instead of considering a forward SDE, we 
 consider the following ``coefficient frozen'' SDE
\be
\left\{
\begin{array}{l}
X^{\vep, \gam_t}(s):= \gam(t) + b(\gam_{t-\vep}) (s-t) + \sigma(\gam_{t-\vep}) (B_s- B_t)  , \quad s\ge t>0,   \\[2mm]
X^{\vep, \gam_t}_t= \gam_t .
\end{array}\right.
\ee
Here and in the following $\vep>0$ is a small parameter.
According to the above definition, we have that $X^{\vep, \gam_t}$ is independent of $\MF_t,$ and for any strongly vertically differentiable functional $\Phi: \D \rightarrow \R,$ 
\be\label{conv-derivative}
\partial_{\gam_{\tau}} [ \Phi(X^{\vep, \gam_t}) ]= \partial_{\ome_{\tau}} \Phi(X^{\vep, \gam_t}), \ \ \ \forall \tau \in (t-\vep, t].
\ee

In view of the FBSDE argument, we consider the following path-dependent BSDE,
\be 
Y^{\vep, \gam_t}(s)= \Phi(X_T^{\vep, \gam_t}) + \int_s^T f(r, X_r^{\vep, \gam_t}, Y^{\vep, \gam_t}(r)) dr - \int_t^T Z^{\vep, \gam_t}(r) dB(r), \ \ \ s \ge t.
\ee

In view of \eqref{conv-derivative}, the following result follows from the differentiability of the corresponding BSDEs shown in Proposition \ref{s-bdd1}. 
\begin{coro}\label{local-derivative}
Assume that $(f,\Phi)$ satisfies assumption \textbf{(A2)} and fix $t\in (0,T].$
For any $(\tau, \gam) \in (t-\vep, t]\times \D,$ $Y^{\vep, \gam_t}(s)$ is twice strongly vertically differentiable at $(\tau,t,\gam)$ for any $s \ge t.$ Moreover, derivatives $\partial_{\gam_\tau} Y^{\vep, \gam_t} $ and  $\partial_{\gam_\tau}^2 Y^{\vep, \gam_t}$ are solutions to \eqref{172} and \eqref{173} respectively.
\be\label{172}
\begin{split}
\partial_{\gam_{\tau}} Y^{\vep, \gam_t}(s)= &\ \partial_{\ome_{\tau}} \Phi(X^{\vep, \gam_t}_T) + \int_s^T \Big[\partial_{\ome_\tau} f (r,X_r^{\vep, \gam_t}, Y^{\vep, \gam_t}(r)) \\
&\ + \partial_y f(r,X_r^{\vep, \gam_t}, Y^{\vep, \gam_t}(r)) \partial_{\gam_{\tau}} Y^{\vep, \gam_t}(r)) \Big] dr - \int_s^T \partial_{\gam_\tau}Z^{\vep, \gam_t }(r) dB(r).
\end{split}
\ee

\be\label{173}
\begin{split}
\partial_{\gam_{\tau}}^2 Y^{\vep, \gam_t}(s)= &\ \partial_{\ome_{\tau}}^2 \Phi(X^{\vep, \gam_t}_T)  + \int_s^T \Big[ \partial_y f(r,X_r^{\vep, \gam_t}, Y^{\vep, \gam_t}(r)) \partial_{\gam_\tau}^2 Y^{\vep,\gam_t}(r) \\
&\ + \partial_{\ome_\tau}\partial_y f(r,X_r^{\vep, \gam_t}, Y^{\vep, \gam_t}(r)) \partial_{\gam_{\tau}} Y^{\vep, \gam_t}(r)) +\partial_{\ome_\tau}^2 f (r,X_r^{\vep, \gam_t}, Y^{\vep, \gam_t}(r)) \Big] dr\\
&\ - \int_s^T \partial_{\gam_\tau}^2 Z^{\vep, \gam_t }(r) dB(r).
\end{split}
\ee

\end{coro}

Let 
\be\label{vepu}
u^\vep(t,\gam):= Y^{\vep, \gam_t}(t),  \ \ \ \forall (t,\gam) \in [0,T] \times \D,
\ee
and it follows from the above corollary that $u^\vep(t,\gam) $ is twice strongly vertically differentiable at $(\tau,t, \gam)$ for any $\tau \in (t-\vep, t].$ Then according to It\^{o}'s formula \eqref{se-ito'}, we have that for any $h\in (0,\vep),$
\be\label{expan1}
\begin{split}
&u^{\vep}(t+h,X^{\vep, \gam_t}_{t+h})- u^\vep (t+h,\gam_t)\\
&\ \ \ = \int_t^{t+h} \partial_{\ome_r} u (t+h, X^{\vep, \gam_t}_r) b(\gam_{t-\vep}) d r+ 
\int_t^{t+h} \partial_{\ome_r} u (t+h, X^{\vep, \gam_t}_r) \sigma(\gam_{t-\vep}) d B(r)
\\
&\ \ \ \ \ \ \ +\frac12 \text{Tr}  \int_t^{t+h} \partial^2_{\ome_r} u (t+h, X^{\vep, \gam_t}_r)\sigma^2(\gam_{t-\vep})   dr.
\end{split}
\ee
Here and in the following we assume $d=1$ for simplicity.
On the other hand, according to a classical argument as in \cite[Lemma 4.4]{PW16} (also see Proposition \ref{flow}), we have 
\be\label{pathflow}
u^{\vep}(s,X_s^{\vep,\gam_t})= Y^{\vep, \gam_t}(s), \ a.s. ,\ \ \ \forall s \ge t.
\ee
Then in view of \eqref{expan1} and \eqref{pathflow}, we have 
\be
\begin{split}
&u^\vep (t+h, \gam_t)- u^\vep (t,\gam_t) =  u^\vep  (t+h, \gam_t)- \E u^{\vep}(t+h,X_{t+h}^{\vep,\gam_t}) + \E u^{\vep}(t+h,X_{t+h}^{\vep,\gam_t})-u^\vep (t,\gam_t)\\
&\ \ \  =-\E\int_t^{t+h} \Big[ \partial_{\ome_{r}} u^\vep (t+h,X^{\vep,\gam_t}_r) b(\gam_{t-\vep}) + \frac12 \partial_{\ome_r}^2 u^\vep (t+h, X^{\vep,\gam_t}_r) \sigma^2(\gam_{t-\vep})\Big] dr\\
&\ \ \ \ \ \ \ - \E \int_t^{t+h} f(r, X^{\vep,\gam_t}_r, Y^{\vep,\gam_t}(r))dr.
\end{split}
\ee

Divide by $h$ both sides of the above identity and take $h$ to zero, we have 
\be\label{approxeq}
\partial_t u^\vep (t, \gam)= -\partial_\ome u^\vep (t, \gam ) b(\gam_{t-\vep }) - \frac12 u^\vep (t, \gam_t ) \sigma^2(\gam_{t-\vep})- f(t, \gam_t, u^\vep (t, \gam)).
\ee

\ \ \ \ \ \ \ To build a solution to \eqref{semieq}, we only need to show $(b,\sigma)(\gam_{t-\vep})$ converges to $(b,\sigma)(\gam_{t})$, and $(I, \partial_\ome, \partial_\ome^2, \partial_t)u^\vep $ also converges to a limit as $\vep$ vanishes. In the following we denote by ``$\lesssim$'' that the left hand is bounded by the right hand side up to a generic constant. Firstly we show that $u^\vep $ converges to a limit $u$. To this end, we assume that 
\begin{itemize}
\item[ $\textbf{(A3)}$] $(i)$. $(b, \sigma):\D \rightarrow \R^d \times \R^{d\times d}$ is locally Lipschitz continuous, i.e. for $\phi= b$ or $\sigma,$
\be
|\phi (\ome^1)- \phi(\ome^2) | \le  (1+ \|\ome^1\|^k+\|\ome^2\|^k ) \|\ome^1- \ome^2\|,
\ee
for some $k \ge 1.$ Moreover, $b$ and $\sigma$ are predictable in the sense of
\be\label{predictable}
(b,\sigma)(\gam_t)=(b,\sigma)(\gam_{t-}),\ \ \ \text{for any }(t,\gam) \in (0,T] \times \D,
\ee
where $\gam_{t-}(s)=\gam_t(s-)$ for any $s\in (0,T].$\\[1mm]
$(ii)$. $(f,\Phi):[0,T] \times \D \times \R \rightarrow \R$ satisfies assumption (A2), and moreover, for any $  (t,\ome,y)\in [0,T] \times \mC \times \R$ and $\{\tau,\tau'\} \in [t,T],$ we have
\begin{align}\nonumber 
&|\partial_{\ome_\tau}\Phi(\ome)-\partial_{\ome_{\tau'}}\Phi(\ome)|+ |\partial^2_{\ome_\tau}\Phi(\ome)-\partial^2_{\ome_{\tau'}}\Phi(\ome)|\lesssim (1+ \|\ome\|^k) \rho(|\tau-\tau'|) , \\ \nonumber 
&|\partial_{\ome_\tau}f(t,\ome,y)-\partial_{\ome_{\tau'}}f(t,\ome,y)| + |\partial^2_{\ome_\tau}f(t,\ome,y)-\partial^2_{\ome_{\tau'}}f(t,\ome,y)| 
 \lesssim (1+ \|\ome\|^k+|y|) \rho(|\tau-\tau'|) , 
\end{align}
where $\rho:[0,\infty) \rightarrow [0,\infty)$ is continuous at $0$ with $\rho(0)=0.$

\end{itemize}

\begin{example}
For a functional $g$ on $\D$ with property \eqref{predictable}, we must have $g(\ome_t)= G(\ome_{t-})$ for a path functional $G$ and vice versa. The benchmark example is 
$g(\ome)= \int_0^T F(r, \ome(r)) dr$ for a continuous function $F$ on $[0,T] \times \R$.
	
\end{example}

Let 
\be\label{newu}
X^{\gam_t}:= X^{0,\gam_t}, \ \ \ Y^{\gam_t}:= Y^{0,\gam_t}, \ \ \ \text{and } \ u (t,\gam):= Y^{ \gam_t}(t), \ \ \ \forall (t,\gam) \in [0,T]\times \D.
\ee
Firstly we show that $u^{\vep }$ converges to $u$ as $\vep $ goes to null. In the following we denote by 
$$
\text{Osc}(\gam,t,\vep):= \sup_{u,v \in [t-\vep, t)} |\gam(u)- \gam(v)|,  \ \ \ \forall \  (t,\gam) \in [0,T] \times \D.
$$

\begin{lem}\label{appro-u}
Suppose that $(b,\sigma, f, \Phi)$ satisfies assumption (A3). Then for any $(t,\gam) \in [0,T] \times \D,$ we have 
\be
\E\|X^{\vep,\gam_t}- X^{\gam_t}\|^2 +\E \| Y^{\vep, \gam_t} - Y^{\gam_t}\|^2 \lesssim (1+\|\gam_t\|^k)^2 \text{Osc}(\gam,t,\vep)^2
\ee
In particular, for any $(t,\gam ) \in [0,T] \times \mC,$
$$
|u^{\vep}(t,\gam)- u(t,\gam)| \lesssim (1+\|\gam_t\|^k) \text{Osc}(\gam,t,\vep).
$$ 

\end{lem}

\begin{proof}
Note that 
$$
X^{\vep, \gam_t}(s)-X^{\gam_t}(s)= (b(\gam_{t-\vep})-b(\gam_{t}))(s-t)+ (\sigma(\gam_{t-\vep})- \sigma(\gam_{t}))(W(s)-W(t)).
$$
It follows from the Burkholder-Davis-Gundy inequality that 
\be
\E\|X^{\vep, \gam_t}-X^{\gam_t}\|^2 \lesssim (1+\|\gam_t\|^k)^2 \| \gam_{t-\vep}- \gam_t\|^2 \lesssim (1+\|\gam_t\|^k)^2 \sup_{u,v \in [t-\vep, t)} |\gam(u)- \gam(v)|^2.
\ee
According to Lemma \ref{linear}, we have the estimate for $\E \| Y^{\vep, \gam_t} - Y^{\gam_t}\|^2.$

\end{proof}

\ \ \ \ \ \  Next we show $u(t,\gam)$ is vertically differentiable. Indeed, in view of \cite[Theorem 4.5]{PW16} and Remark \ref{H2}, thanks to the predictable assumption \eqref{predictable}, we have that $Y^{\gam_t}$ is twice vertically differentiable at $(t,\gam)$ following similar argument. Remark that $Y^{\gam_t}$ may not be strongly vertically differentiable. Then $u(t,\gam)$ is twice vertically differentiable and its derivatives $(\partial_\ome u, \partial_\ome^2 u)(t,\ome)$ satisfy
\be
(\partial_{\ome} u, \partial^2_{\ome} u)(t,\gam)=( \partial_{\gam_t} Y^{\gam_t}(t),\partial^2_{\gam_t} Y^{\gam_t}(t)),
\ee
where $\partial_{\gam_t} Y^{\gam_t}$ and $\partial^2_{\gam_t} Y^{\gam_t}$ are solutions to \eqref{172} and \eqref{173} respectively with $\vep=0.$ In conclusion, we have

%

\begin{lem}\label{ver-de-con}
$u$ is continuously twice vertically differentiable. Moreover, for any $(t,\gam) \in [0,T]\times \D,$ 
\be(\partial_{\ome}u^\vep (t,\gam),\partial_{\ome}^2u^\vep (t,\gam)) \rightarrow (\partial_{\ome}u(t,\gam), \partial_{\ome}^2u(t,\gam)), \ \ \ \text{as} \ \vep \rightarrow 0.
\ee
	
\end{lem}

\begin{proof}
The first part of this theorem follows from the argument above this lemma. For the second part, in a similar spirit as the proof of \eqref{3.7} of Proposition \ref{s-bdd1}, we have 
\be
\|\partial_\ome u^{\vep}(t,\gam)- \partial_\ome u (t,\gam)\| \le \|X^{\vep, \gam_t}- X^{\gam_t}\|_{\bS^2} \lesssim (1+\|\gam_t\|^k) \text{Osc}(\gam,t,\vep),
\ee
which converges to zero as $\vep $ goes to null since $\gam \in \D.$
Then convergence of $\partial_\ome^2 u$ follows similarly.

%
%

\end{proof}

\ \ \ \ It remains to prove the horizontal differentiability of $u$. To this end, we need the following estimates.

\begin{lem}\label{4.13}

For any $\{t,t'\} \in (0,T],$ and $\{\gam,\gam'\} \in \D,$ we have for any $p \ge 2,$
\begin{align}\label{est-xt}
\| X^{\vep, \gam_t}-  X^{\vep, \gam'_{t'}}\|_{\bS^{p}} \le C (1+\|\gam_t\|^{k+1}+\|\gam'_{t'}\|^{k+1}) (\| \gam_t- \gam'_{t'}\|+ |t-t'|^\frac12),\\
\| Y^{\vep, \gam_t}-  Y^{\vep, \gam'_{t'}}\|_{\bS^p} \le C (1+\|\gam_t\|^{k+1}+\|\gam'_{t'}\|^{k+1}) (\| \gam_t- \gam'_{t'}\|+ |t-t'|^\frac12),
\end{align}
where the generic constant $C$ is independent of $\vep.$
	
\end{lem}

\begin{proof}

We only show the first estimate and the second one follows from the first and classical argument as shown in Lemma \ref{unibd}. Without loss of generality, assume $t'>t.$ Indeed, for $s \le t,$
$|X^{\vep, \gam_t}(s)-X^{\vep, \gam'_{t'}}(s)| =|\gam(s)-\gam'(s)|$. For $s\in[t,t')$
\be
\begin{split}
|X^{\vep, \gam_t}(s)-X^{\vep, \gam'_{t'}}(s)| &\le |\gam(t)-\gam'(s)|+|b(\gam_{t-\vep})|(t'-t)+|\sigma(\gam_{t-\vep})| \ |W_s- W_t|\\
&\lesssim  \|\gam_t- \gam'_{t'}\| + (1+\|\gam_t\|^{k+1}) (|t'-t| +|W_s- W_t|).
\end{split}
\ee
For $s\in [t',T],$ 
\be
\begin{split}
&|X^{\vep, \gam_t}(s)-X^{\vep, \gam'_{t'}}(s)|\\
 &\ \ \ \ \le |\gam(t)-\gam'(t')|+|b(\gam_{t-\vep})|(t'-t)+|\sigma(\gam_{t-\vep})| \ |W_{t'}- W_t|\\
&\ \ \ \ \ \ \  + (b(\gam_{t-\vep})-b(\gam'_{t'-\vep}))(s-t') +(\sigma(\gam_{t-\vep})-\sigma (\gam'_{t'-\vep}))|W_s-W_{t'}|\\
&\ \ \ \  \lesssim  (1+\|\gam_t\|^{k+1} + \gam'_{t'}\|^{k+1}) (|t'-t| +|W_{t'}- W_t| + \|\gam_t- \gam'_{t'}\| (1+|W_s-W_{t'}|)),
\end{split}
\ee
which implies \eqref{est-xt} by Burkholder-Davis-Gundy inequality.
	
\end{proof}

\begin{prop}
Suppose that $(b,\sigma, f, \Phi)$ satisfies assumption \textbf{(A3)}.
For any $\tau \in (t-\vep,t],$ let $\partial_{\gam_\tau}Y^{\vep, \gam_t}$ and $\partial^2_{\gam_\tau}Y^{\vep, \gam_t}$ be solutions of linear BSDEs \eqref{172} and \eqref{173} respectively. Then for any $\gam, \gam' \in \D,$ $t,t' \in (0,T]$, $\tau \in (t-\vep,t],$ and $\tau' \in (t'-\vep,t'],$ we have 
\begin{align}\label{202}
&	\|\partial_{\gam_\tau}Y^{\vep, \gam_t}-\partial_{\gam'_{\tau'}}Y^{\vep, \gam'_{t'}} \|_{\bS^2,[t' \vee t,T]}\\ \nonumber
&\ \ \ \ \ \  \le C (1+\|\gam\|^\ell +\|\gam'\|^\ell) (\|\gam_t- \gam'_{t'}\|+|t-t'|^\frac12 + \rho(|\tau-\tau'|)),\\ \label{203}
&	\|\partial^2_{\gam_\tau}Y^{\vep, \gam_t}-\partial^2_{\gam'_{\tau'}}Y^{\vep, \gam'_{t'}} \|_{\bS^2,[t' \vee t,T]}\\ \nonumber
&\ \ \ \ \ \  \le C (1+\|\gam\|^\ell +\|\gam'\|^\ell) (\|\gam_t- \gam'_{t'}\|+|t-t'|^\frac12 + \rho(|\tau-\tau'|)),
\end{align}
where $\ell>k+1$ and the generic constant $C$ is independent of $\vep.$
	
\end{prop}

\begin{proof}

Without loss of generality, assume $t \le t'.$ In the following we write $(X,X',Y,Y',Z,Z' )$ short for $(X^{\vep, \gam_t},X^{\vep, \gam'_{t'}},Y^{\vep, \gam_t},Y^{\vep, \gam'_{t'}},Z^{\vep, \gam_t},Z^{\vep, \gam'_{t'}}).$ For any $s \ge t',$ let 
$$
(\delta y(s),\delta z(s)):=(\partial_{\gam_\tau}Y(s)-\partial_{\gam'_{\tau'}}Y'(s),\partial_{\gam_\tau}Z(s)-\partial_{\gam'_{\tau'}}Z'(s)),
$$
and it is the solution to linear BSDE
\be
\begin{split}
\delta y(s)=& \ [\partial_{\ome_\tau} \Phi (X )-\partial_{\ome_{\tau'}} \Phi (X')] + \int_s^T \Big[ \left(\partial_{\ome_{\tau}} f(r,X(r),Y(r))-\partial_{\ome_{\tau'}}f(r,X'(r),Y'(r))\right)\\
& + \partial_y f(r,X(r),Y(r)) \delta y(r) + \left( \partial_y f(r,X(r),Y(r))-\partial_y f(r,X'(r),Y'(r)) \right)Y'(r) \Big] dr\\
& - \int_s^T \delta z(r)dB(r).	
\end{split}
\ee
In view of Lemma \ref{linear}, we have 
\begin{align}\nonumber
 \|\delta y\|^2_{\bS^2,[t',T]}  \lesssim & \  \|\partial_{\ome_\tau} \Phi (X )-\partial_{\ome_{\tau'}} \Phi (X')\|_{L^2}^2 + \E \left[\int_{t'}^T |\partial_{\ome_{\tau}} f(r,X(r),Y(r))-\partial_{\ome_{\tau'}}f(r,X'(r),Y'(r))| dr\right]^2	\\ \label{decdy}
 &\  + \E \left[\int_{t'}^T |\partial_{y} f(r,X(r),Y(r))-\partial_{y}f(r,X'(r),Y'(r))| |Y'(r)| dr \right]^2.
\end{align}
For the first term on the right hand side of the above inequality, we have
\be
\begin{split}
&\|\partial_{\ome_\tau} \Phi (X )-\partial_{\ome_{\tau'}} \Phi (X')\|_{L^2}^2\\
&\ \ \ \ \ \  \lesssim \|\partial_{\ome_\tau} \Phi (X )-\partial_{\ome_{\tau'}} \Phi (X)\|_{L^2}^2+\|\partial_{\ome_{\tau'}} \Phi (X )-\partial_{\ome_{\tau'}} \Phi (X')\|_{L^2}^2\\
&\ \ \ \ \ \  \lesssim   (1+\|\gam_t\|^{k+1}+\|\gam'_{t'}\|^{k+1})^2 ( \rho(|\tau-\tau'|)^2+\| \gam_t- \gam'_{t'}\|^2+ |t-t'|), \\
\end{split}
\ee
where we apply assumption \textbf{(A3)} and Lemma \ref{4.13} in the last inequality. Similarly, we have 
\be
\begin{split}
& \E \left[\int_{t'}^T |\partial_{\ome_{\tau}} f(r,X(r),Y(r))-\partial_{\ome_{\tau'}}f(r,X'(r),Y'(r))| dr\right]^2\\
&\ \ \ \ \ \  \lesssim    (1+\|\gam_t\|^{k+1}+\|\gam'_{t'}\|^{k+1})^2 ( \rho(|\tau-\tau'|)^2+\| \gam_t- \gam'_{t'}\|^2+ |t-t'|).	
\end{split}
\ee
For the last term on the right hand side of \eqref{decdy}, by the Cauchy-Schwartz inequality, we have 
\be
\begin{split}
& \E \left[ \int_{t'}^T |\partial_{y} f(r,X(r),Y(r))-\partial_{y}f(r,X'(r),Y'(r))| |Y'(r)| dr\right]^2\\
&\ \ \ \ \ \  \lesssim   \left[ \E[\| X-X'\|^4+ \|Y-Y' \|^4] \right]^{\frac12} [\E\|Y'\|^4]^{\frac12}\\
&\ \ \ \ \ \  \lesssim    (1+\|\gam_t\|^{\ell}+\|\gam'_{t'}\|^{\ell}) ( \rho(|\tau-\tau'|)^2+\| \gam_t- \gam'_{t'}\|^2+ |t-t'|),
\end{split}
\ee
with an integer $\ell.$ Then inequality \eqref{202} is implied by the above estimates. \eqref{203} follows similarly.

\end{proof}

\ \ \ \ \ \ Now we are ready to give the main theorem of this subsection. To have the uniqueness of solutions for \eqref{semieq}, we assume that 
\begin{itemize}
\item[ $\textbf{(A4)}$] $(b,\sigma)$ is Lipschitz on $\D, $ i.e. for $\phi= b, \sigma,$
$
|\phi(\gam)-\phi(\gam')| \lesssim \|\gam-\gam'\|.
$

\end{itemize}

\begin{thm}

Suppose assumption \textbf{(A3)} holds. Then there exists a classical solution to equation \eqref{semieq}. Moreover, if \textbf{(A4)} also holds for $(b,\sigma),$ the solution is unique on $\FC^{1,2}_p(\mC).$

\end{thm}

\begin{proof}

Firstly we show that $u$ given by \eqref{newu} is a classical solution to \eqref{semieq}. In view of Lemma \ref{ver-de-con}, we need to show $u$ is horizontally differentiable. Recall that $u^\vep $ is given by \eqref{vepu}. For any $h>0,$
\be\label{decomhu}
\begin{split}
&	u(t+h, \gam_t)- u(t,\gam_t)\\
&\ \ \ 	=[u(t+h, \gam_t)- u^\vep(t+h, \gam_t)]+ [u^\vep(t+h, \gam_t)- u^\vep (t,\gam_t)] + [u^\vep (t,\gam_t) - u(t,\gam_t)].\\	
\end{split}
\ee
According to Lemma \ref{appro-u}, we have 
\be\label{208}
|u(t+h, \gam_t)- u^\vep(t+h, \gam_t)| \lesssim (1+\|\gam_t\|^k) \text{Osc}(\gam_t,t+h,\vep) =0,
\ee
whenever $\vep <h.$	Similarly, for the third term on the right hand side of \eqref{decomhu},
\be\label{209}
|u^\vep (t,\gam_t) - u(t,\gam_t)| \lesssim (1+\|\gam_t\|^k) \text{Osc}(\gam_t,t,\vep)
\ee
Since $\gam \in \D,$ for any $h,$ we can choose $\vep $ small such that $\text{Osc}(\gam_t,t,\vep) =o(h).$
Since $u^\vep$ is horizontally differentiable, in view of \eqref{approxeq}, we have 
\be\label{210}
\begin{split}
&u^\vep(t+h, \gam_t)- u^\vep (t,\gam_t) = h\int_0^1 \partial_t u^{\vep}(t+\lambda h, \gam_t) d \lambda 	\\
 = & h \int_0^1\Big[-\partial_{\ome} u^\vep (t+\lambda h, \gam_t)b(\gam_{t-\vep}) - \frac12 \partial_\ome^2 u^\vep(t+\lambda h, \gam_t)\sigma(\gam_{t-\vep})^2 -f(\gam_t, u^\vep(t+\lambda h, \gam_t) ) \Big].
\end{split}
\ee

Take \eqref{208}, \eqref{209} and \eqref{210} to \eqref{decomhu}, and divide both sides by $h.$ Then we have 
\be
\begin{split}
\frac{u(t+h, \gam_t)- u(t,\gam_t)}{h}  =& \ o(1) +  \int_0^1\Big[-\partial_{\ome} u^\vep (t+\lambda h, \gam_t)b(\gam_{t-\vep}) - \frac12 \partial_\ome^2 u^\vep(t+\lambda h, \gam_t)\sigma(\gam_{t-\vep})^2 \\
& \ -f(t, \gam_t, u^\vep(t+\lambda h, \gam_t) ) \Big].
\end{split}
\ee
Let $h$ go to zero, and then $\vep$ go to zero. According to Lemma \ref{appro-u} and Lemma \ref{ver-de-con}, the right hand side of the above identity converges to $-\partial_{\ome} u (t,\gam_t) b(\gam_t)- \frac12 \partial_\ome^2 u (t , \gam_t)\sigma(\gam_{t })^2 - f(t,\gam_t, u(t,\gam_t))$, which implies $u$ is horizontally differentiable and $u$ satisfy PPDE \eqref{semieq}. 
Now we show the uniqueness of equation \eqref{semieq}. Suppose that $\bar{u} \in \FC^{1,2}_p $ is a classical solution. Consider a path-dependent SDE
\be
\left\{
\begin{array}{l}
\bar{X}(s):= \gam(t) + \int_t^s b(\bX_r) dr + \int_t^s \sigma(\bX_r) dB(r) , \quad s\ge t>0,   \\[2mm]
\bX_t= \gam_t .
\end{array}\right.
\ee

According to assumption \textbf{(A4)}, the last equation has a unique solution $\bX$. Applying functional It\^o's formula to $\bar{u}(s, \bX_s),$ we have
\be
\begin{split}
d\bar{u}(s,\bX_s)= &\ [\partial_s \bar{u}(s,\bX_s)+ \partial_\ome \bar{u}(s,\bX_s) b(\bX_s) + \frac12 \partial^2_\ome \bar{u}(s,\bX_s) \sigma^2(\bX_s) ] ds\\
&\ + \ \partial_\ome \bar{u}(s,\bX_s) \sigma(\bX_s)d B(s),\\
=& \ -f(s,\bar{X}_s, \bar{u}(s, \bX_s)) ds +\partial_\ome \bar{u}(s,\bX_s) \sigma(\bX_s)d B(s).
\end{split}
\ee
Let 
\be
( Y,  Z)(s):= (\bar{u}(s, \bX_s), \partial_\ome \bar{u}(s,\bX_s) \sigma(\bX_s)), 
\ee
and it gives a solution to the following BSDE,
\be
dY(s)= -f (s, \bX_s, Y(s)) ds + Z(s)dB(s),\ \ \ 
Y(T)=\Phi (\bX_T).
\ee
According to the well-posedness of the above BSDE, for any classical solution $\tilde{u},$ 
$
\tilde{u}(s, \bX_s)= Y(s).
$  
In particular, $\tilde{u}(t,\gam_t)= Y(t)= \bar{u}(t,\gam_t)$ which concludes the uniqueness.

\end{proof}

\begin{rem}\label{gsemieq}

$(i)$
For the general case when $f$ depends also on $\partial_\ome u$, in view of \eqref{uflow} and regularity results on $Z$ of BSDEs (see \cite{MZ}), a smooth solution can be constructed in a similar spirit under the condition that $Z$ has continuous paths.
$(ii)$ For the mean-field equation \eqref{1}, similar as shown in Section \ref{sol-to}, we can introduce the following diffusion 
\be\label{X2}
\left\{
\begin{array}{l}
X(s)= \eta(t)+ b_2(\mu_t) (s-t)+ \sigma_2(\mu_t) (B(s)-B(t)),\\
X_t =\eta_t.
\end{array}
\right.
\ee
Similarly as the pure path-dependent case above, we apply the approximation argument with ``coefficient frozen'' process
\be\label{X2}
\left\{
\begin{array}{l}
X^{\vep,\mu_t}(s)= \eta(t)+ b_2(\mu_{t-\vep}) (s-t)+ \sigma_2(\mu_{t-\vep}) (B(s)-B(t)),\\
X_t =\eta_t,
\end{array}
\right.
\ee
where $\ML(\eta)=\mu.$
Under similar assumptions as \textbf{(A3)} and \textbf{(A4)} with adaptation in a mean-field setting, one can construct a unique classical solution to \eqref{1}.

\end{rem}

%
%
%
%
%

\subsection{Some typical cases}

\ \ \ \ \ \ In view of Remark \ref{state}, the path-dependent mean-field equation \eqref{pde} involves many interesting special cases. In the following we list some typical ones, where we always assume that $(f,\Phi)$ satisfies Assumption {\bf(A2)} and $(t,\ome,y,z,\mu,\nu)\in [0,T]\times \mC \times \R \times \R^d \times \op_2^C \times \op_2(\R).$

\bigskip

$(i)$ \textbf{The state-dependent master equation}. Suppose that $(f,\Phi)$ has a state-dependent form:
\bea
& f(t,\ome, y, z, \mu, \nu )= F(t,\ome(t), y, z, \mu(t), \nu),\\
& \Phi(T, \ome , \mu)= G(\ome(T), \mu(T)),
\eea
for functional $F:[0,T] \times \R^d \times \R \times \R^d \times \op_2(\R^d) \times \op_2(\R) \mapsto \R$ and $G:\R^d \times \op_2(\R^d) \mapsto \R.$ In this case, the differentiability of $(f,\Phi)$ is equivalent to the differentiability of $(F,G)$ in its corresponding domain, and path-dependent equation \eqref{pde} has the form
\bea\label{state1}
\left\{
\begin{array}{l}
\partial_t u(t,\gam, \mu ) + \frac12 \text{Tr}\ [ \partial_{\ome}^2 u(t,\gam,\mu)] + \frac12 \text{Tr}\left[ \int_{\mC} \partial_{\tome} \partial_\mu u(t,\gam, \mu, \tome )\mu(d\tome)\right] \\[2mm]
\ \ \ \ + F(t,\gam(t), u(t,\gam,\mu), \partial_\ome u(t,\gam,\mu), \mu(t), \ML_{u(t,W^\mu,\mu)})=0,\\
 \\
 u(T, \gam, \mu)= G(\gam(T), \mu(T) ), \ \ \ (t,\gam,\mu)\in \hC.
\end{array}
\right.
\eea
Since the corresponding FBSDE is Markovian, we see that $u(t,\gam,\mu)=U(t,\gam(t),\mu(t))$ for a smooth (indeed $\FC^{1,2,1,1}$ in view of Definition \ref{diff} with obvious adjustment) functional $U:[0,T] \times \R^d \times \op_2(\R^d) \mapsto \R$ thanks to Remark \ref{state}. Then we obtain well-posedness of the (state-dependent) master equation considered in \cite{BLP17}, \cite{CCD},
\bea\label{state2}
\left\{
\begin{array}{l}
\partial_t U(t,a, \lambda ) + \frac12 \text{Tr}\ [ \partial_{a}^2 U(t,a,\lambda)] + \frac12 \int_{\R^d} \partial_{\tilde{a}} \partial_\lambda U(t,a, \lambda, \tilde{a} )\lambda(d\tilde{a}) \\[2mm]
\ \ \ \  + F(t,a, U(t,a,\lambda), \partial_a U(t,a,\lambda), \lambda , \ML_{u(t,\xi,\lambda)})=0,\\
 \\
 U(T, a, \lambda)= G(a, \lambda ),\ \ \ \ (t,a,\lambda)\in [0,T] \times \R^d \times \op_2(\R^d),
\end{array}
\right.
\eea
where $\xi$ is a random variable on $\R^d$ with law $\lambda.$

\bigskip

$(ii)$ \textbf{The PPDE}. Suppose that $(f,\Phi)$ does not depend on measures:
\bea
& f(t,\ome, y, z, \mu, \nu )= H(t,\ome , y, z ),\\
& \Phi(T, \ome , \mu)= I(\ome_T ),
\eea
with $H:[0,T] \times \mC \times \R \times \R^d   \mapsto \R$ and $I: \mC \mapsto \R.$ Then equation \eqref{pde} is written as PPDE,
\bea\label{ppde}
\left\{
\begin{array}{l}
\partial_t u(t,\gam  ) + \frac12 \text{Tr}\ [ \partial_{\ome}^2 u(t,\gam)]
 + H(t,\gam, u(t,\gam), \partial_\ome u(t,\gam) )=0,\\
 \\
 u(T, \gam)= I(\gam_T ),\ \ \ \ (t,\gam )\in [0,T] \times \mC.
\end{array}
\right.
\eea
Then Theorem \ref{mainthm} recovers the well-posedness of PPDEs shown in \cite[Theorem 4.5]{PW16} under a stronger assumption in view of the integrability of BSDE \eqref{e1'}.
\bigskip

$(iii)$ \textbf{The measure-dependent master equation}. Suppose that $(f,\Phi)$ does not depend on the path/state variable and has the following structure
\bea
& f(t,\ome, y, z, \mu, \nu )= J(t,y,   \mu  ),\\
& \Phi(T, \ome , \mu)= K(\mu_T ),
\eea
where $J: [0,T] \times \R  \times \op^C_2 \mapsto \R$ and $K:\op^C_2 \mapsto \R.$ In this case our path-dependent mean-field equation is reduced to
\be
\left\{
\begin{array}{l}
\partial_t u(t,\mu  ) + \frac12 \text{Tr}\left[ \int_{\mC} \partial_{\tome} \partial_\mu u(t,  \mu, \tome )\mu(d\tome)\right]
 + J(t,  u(t, \mu),   \mu  )=0,\\
 \\
 u(T,   \mu)= K(  \mu_T ),\ \ \ \ (t, \mu)\in [0,T]  \times \op_2^C.
\end{array}
\right.
\ee
Such form of master equation is introduced in \cite{WZ20} for a closed-loop control problem with control being the form of $\alpha_t=\alpha(t,\ML_{X_t}).$
\bigskip

$(iv)$ \textbf{Path-state mixed cases}. Suppose that $(f,\Phi)$ has the following form
\bea
& f(t,\ome, y, z, \mu, \nu )= L(t,\ome, y, z,  \mu(t)    ),\\
& \Phi(T, \ome , \mu)= M(\ome_T, \mu(T)  ),
\eea
where $L:[0,T]\times\mC \times \R \times \R^d \times  \op_2(\R^d) \mapsto \R$ and $M: \mC \times \op_2(\R^d) \mapsto \R$. Then we have a unique smooth solution for the following mean-field equation
\bea\label{pathstate}
\left\{
\begin{array}{l}
\partial_t u(t,\gam, \mu ) + \frac12 \text{Tr}\ [ \partial_{\ome}^2 u(t,\gam,\mu)] + \frac12 \text{Tr}\left[ \int_{\mC} \partial_{\tome} \partial_\mu u(t,\gam, \mu, \tome )\mu(d\tome)\right] \\[2mm]
 \ \ \ \ + L(t,\gam , u(t,\gam,\mu), \partial_\ome u(t,\gam,\mu), \mu(t) )=0,\\
 \\
 u(T, \gam, \mu)= M(\gam_T, \mu(T) ), \ \ \ (t,\gam,\mu)\in \hC.
\end{array}
\right.
\eea
In view of the corresponding FBSDE \eqref{e2'}, we see that $u(t,\gam,\mu)=U(t,\gam ,\mu(t))$ for a functional $U:[0,T] \times \mC \times \op_2(\R^d) \mapsto \R.$ Since $u \in \FC^{1,2,1,1},$ $U $ is the unique classical solution to the master equation
\bea\label{pathstate1}
\left\{
\begin{array}{l}
\partial_t U(t,\gam, \lambda ) + \frac12 \text{Tr}\ [ \partial_{\ome}^2 U(t,\gam,\lambda)] + \frac12 \text{Tr}\left[ \int_{\R^d} \partial_{\tilde{a}} \partial_\lambda U(t,\gam, \lambda, \tilde{a} )\lambda(d \tilde{a})\right] \\[2mm]
\ \ \ \ + L(t,\gam , U(t,\gam,\lambda), \partial_\ome U(t,\gam,\lambda), \mu(t) )=0,\\
 \\
 U(T, \gam, \lambda)= M(\gam_T, \lambda  ), \ \ \ (t,\gam,\lambda)\in[0,T] \times \mC \times \op_2(\R^d).
\end{array}
\right.
\eea

\ \ \ \ \ \ On the other hand, if $(f,\Phi)$ has the the following structure
\bea
&& f(t,\ome, y, z, \mu, \nu )= N(t,\ome(t), y, z,  \mu,\nu    ) \quad \quad \text{and} \\
&& \Phi(T, \ome , \mu)= P(\ome(T), \mu_T )
\eea
for some  functionals $N:[0,T] \times \R^d \times \R \times \R^d \times \op_2^C \times \op_2(\R) \mapsto \R$ and $P: \R^d \times \op_2^C \mapsto \R, $ then $u(t,\ome, \mu)= V(t,\ome(t),\mu_t)$ for a functional $V:[0,T] \times \R^d \times \op_2^C \to \R.$ Then,  $V$ is the unique smooth solution to the master equation
\bea\label{pathstate2}
\left\{
\begin{array}{l}
\partial_t V(t,a, \mu ) + \frac12 \text{Tr}\ [ \partial_{a}^2 V(t,a,\mu)] + \frac12 \text{Tr}\left[  \int_{\mC} \partial_{\tome} \partial_\mu V(t,x, \mu, \tome )\mu(d \tome) \right] \\[2mm]
\ \ \ \  + N(t, a , V(t,a,\mu), \partial_a V(t,a,\mu), \mu, \ML_{V(t,W^{\mu}(t), \mu)}  )=0,\\
 \\
 V(T, a, \mu)= P(a, \mu_T  ), \ \ \ (t, a,\mu)\in[0,T] \times \R^d \times \op_2^C.
\end{array}
\right.
\eea

\bigskip

$(v)$ \textbf{A non-smooth case}. 
For any $t_0 \in (0, T)$ and $F \in C_b^3(\R^d),$ consider PPDE
\be\label{ex-pde}
\left\{
\begin{array}{l}
\partial_t u(t,\ome)+ \frac12 \text{Tr} \left[ \partial^2_{\ome} u(t,\ome)\right]=0, \\[2mm]
u(T,\ome)=F( \ome(t_0)),\ \ \ (t,\ome)\in[0,T]\times \mC.
\end{array}
\right.
\ee
In this case we have
$$
\partial_{\ome_t} F(\ome(t_0)) = DF(\ome(t_0)) 1_{[0,t_0]}(t), \quad \forall (t, \ome)\in [0,T]\times  \mC, 
$$
which is not continuous on $[0,T] \times \D$. Therefore,  {\bf(A2)} is not satisfied and the preceding PPDE has no smooth solution. In particular, when $F(x)= a x$ for some $a\in \R^d,$ by resolvability of the corresponding BSDE, the functional $u(t,\ome):= a  \ome(t \wedge t_0), (t, \ome)\in [0,T]\times  \mC,$ is the unique viscosity solution in the sense of \cite{EKTZ14}.

\ \ \ \ \ \ In a similar way, for any $(F, G)\in C_b^3(\R^d)\times  C_b^3(\R^d, \R^d)$, the path-dependent master equation
    \be\label{ex-pde2}
\left\{
\begin{array}{l}
\partial_t u(t,\mu)+ \frac12  \text{Tr} \left[ \partial_{\ome'} \partial_{\mu} u(t,\mu, \ome')\right]=0,\\[2mm]
u(T,\mu)= F\left( \E^{\mu}[G( W(t_0))] \right),\ \ \ (t,\mu)\in [0,T]\times \op_2^C
\end{array}
\right.
\ee
has no  smooth solution. In particular,  if $\Phi(T,\mu)= \E^{\mu}[aW(t_0)],$ $u(t,\mu):= \E^{\mu}[a W(t\wedge t_0)]$ is the unique viscosity solution of equation \eqref{ex-pde2} in the sense of \cite{WZ20}.\\

\section{Differentiability of solutions of path-dependent mean-field BSDEs}

\ \ \ \ \ \ In the following, for any process $(X,Y,Z)$ on the probability space $(\Omega, \MF,P)$, we denote by  $(\tX,\tY,\tZ)$ an independent copy of $(X,Y,Z)$, which means that $(\tX,\tY,\tZ)$ is defined in an independent probability space $(\tOmega, \tMF,\tP)$ and has the same law as $(X,Y,Z).$ The following linear mean-field BSDEs and estimates are frequently used in subsequent discussions. Different from a classical linear BSDE, all linear coefficients are not necessarily  bounded. For simplicity, we only address the  one-dimensional case. Similar assertions  in this section are still true  in the multi-dimensional case.


\begin{lem}\label{linear}
Let $\xi \in L^2(\MF_T)$ and $t\in[0,T)$. Suppose that $(\alpha ,\beta )\in \bH^2([t,T], \R \times \R^d )$ is bounded, $ c  \in \bH^2([t,T],\R^k)$, and $h $ is a real valued progressively measurable process such that $\int_t^T|h(r)|dr \in L^2(\MF_T).$ For any $(r, x)\in [t,T] \times \R^k ,$ $g(\cdot,  x) \in \bH^2([t,T] )$ and $g(r,  \cdot)$ is uniformly Lipschitz continuous:
$$
\sup_{r \in [t,T]}|g(r,x)-g(r,y)| \le L |x-y|, \quad  \forall y \in \R^k,\ P\text{-}a.s.
$$
for a constant $L.$ Then the following linear mean-field BSDE: $s\in[t,T],$
\be\label{e7}
Y(s) = \xi + \int_s^T \Big( \alpha(r) Y(r) + \beta(r) Z(r) + \tE[g(r, \tc(r)) \tY(r) ] + h(r) \Big) dr - \int_s^T Z(r) dB(r),
\ee
with $(\tc, \tY)$ being an independent copy of $(c, Y),$ has a unique solution $(Y, Z) \in \bS^2([t,T]) \times \bH^2([t,T],\R^d)$. Moreover,  we have
\be\label{l-bd}
\|(Y,Z )\|_{\bS^2 \times \bH^2}^2 \le C (\|\xi\|_{L^2}^2 + ||\int_t^T|h(r)| dr ||^2_{L^2} ) e^{C(\|c\|_{\bH^2}  + \|g(\cdot, 0)\|_{\bH^2}  ) }
\ee
for a constant $C$ depending on the bound of $\alpha, \beta$ and $L.$
In particular,  if $g$ is uniformly bounded, we have
\be
\|(Y,Z )\|_{\bS^2 \times \bH^2}^2 \le C(\|\xi\|_{L^2}^2 + ||\int_t^T|h(r)| dr ||^2_{L^2} ).
\ee


\end{lem}

\begin{rem} Since neither  of $g(t,x) $ and $g(r,c(r))$ is bounded or uniformly integrable for any $c(r)\in \bH^2([t,T],\R^k)$,  the well-posedness of the mean-field BSDE is not an immediate consequence of existing  works such as \cite{BLP09}.
\end{rem}

\begin{proof}

For any $y\in \bH^2, $ consider the following classical linear BSDE
\be\label{e8}
Y(s) = \xi + \int_s^T \Big( \alpha(r) Y(r) + \beta(r) Z(r) + \tE[g(r, \tc(r)) \tty(r) ] + h(r) \Big) dr - \int_s^T Z(r) dB(r),
\ee
where $(\tc,\ty)$ is an independent copy of $(c,y)$.
To prove that  it is well-posed on $[t,T]$, we only need to show  
\be
\E \left[  \int_t^T \Big|\tE[g(r, \tc(r)) \tty(r) ] \Big| dr   \right]^2 < \infty.
\ee
Indeed, by the uniformly Lipschitz continuity of $g$, we have
\be\label{gy}
\begin{split}
&\E\Big|\int_t^T |\tE[g(r, \tc(r)) \tty(r) ] | dr \Big|^2    \le C \E\Big[ \int_t^T |\tE|g(r,0) \tty(r)| dr +   \tE | \tc(r)\tty(r) | dr \Big]^2 \\
 &\ \ \le C \E \Big[\int_t^T |g(r,0) \tE[\tty(r)]| dr  \Big]^2 + C\Big[  \int_t^T \tE[ |\tc(r) \tty(r)|]dr  \Big]^2\\
 &\ \ \le  C \Big[ ( \E \int_t^T |g(r,0)|^2 dr)(\int_t^T [\tE|\tty(r)|]^2 dr) + (\int_t^T \tE|\tc(r)|^2 dr ) (\int_t^T \tE|\tty(r)|^2 dr ) \Big]\\
 & \ \ \le C \Big[ \|g(\cdot,0)\|_{\bH^2}^2  \|y\|_{\bH^2}^2 + \|c\|_{\bH^2}^2 \|y\|_{\bH^2}^2   \Big]
\le  C (\|g(\cdot,0)\|_{\bH^2}^2+\|c\|_{\bH^2 }^2) \|y\|_{\bH^2}^2,
\end{split}
\ee
where we have used   in the third  inequality  the H\"older inequality to integrals over $[t,T]$ and $[t,T]\times \Omega$. Then for any $y\in \bH^2, $ BSDE~\eqref{e8} has a unique solution $(Y, Z) \in \bH^2 \times \bH^2$. The solution mapping $\Phi: y\mapsto Y$ defines a transformation on $\bH^2$,  and turns out to  be a contraction under the following equivalent norm
\be
\|Y\|^2:= \E \int_t^T e^{As-\int_s^T (\| g(r,0) \|^2_{L^2}+ \| c(r) \|^2_{L^2}) dr } |Y(s)|^2   ds, \quad Y\in \bH^2
\ee
with  $A$ being a constant to be determined later. In fact, 
take any $y^{(j)}\in \bH^2 $ and denote  by $(Y^{(j)}, Z^{(j)})$ the corresponding solution of classical BSDE \eqref{e8},  with $j=1,2$. Set $(\Delta Y ,\Delta Z):= (Y^{(1)} - Y^{(2)}, Z^{(1)} - Z^{(2)})$, $ \Delta y :=y^{(1)} - y^{(2)} $, and $f(r):=\| g(r,0) \|^2_{L^2} + \| c(r) \|^2_{L^2} $.
Applying  It\^o's formula to $e^{As-\int_s^T f(r) dr } |\Delta Y(s)|^2$ on $s\in [t,T]$, we have
\be\nonumber
\begin{split}
  - e^{At-\int_t^T f(r) dr } |\Delta Y(t)|^2& = \int_t^T (A+ f(s) ) e^{As-\int_s^T f(r) dr } |\Delta Y(s)|^2 ds\\
 &\ \ \ \ + 2 \int_t^T e^{As-\int_s^T f(r) dr } \Delta Y d(\Delta Y) + \int_t^T e^{As-\int_s^T f(r) dr } |\Delta Z|^2 ds.
\end{split}
\ee
Therefore,
\begin{align*}
&e^{At-\int_t^T f(r) dr } |\Delta Y(t)|^2 + \int_t^T (A+ f(s) ) e^{As-\int_s^T f(r) dr } |\Delta Y(s)|^2 dr \\
&\quad + \int_t^T  e^{As-\int_s^T f(r) dr } |\Delta Z(s)|^2 dr\\
=&\quad 2 \int_t^T e^{As-\int_s^T f(r) dr } \Delta Y [\alpha \Delta Y + \beta \Delta Z + \tE[g(s, \tc(s)) \Delta \tty] ] ds\\
&\quad  - 2 \int_t^T e^{As-\int_s^T f(r) dr } \Delta Y \Delta Z dW(s)\\
\le &\quad C \int_t^T e^{As-\int_s^T f(r)dr } |\Delta Y|^2 ds + C \int_t^T e^{As-\int_s^T f(r) dr } |\Delta Y|^2 ds \\
&\quad + \frac12 \int_t^T e^{As-\int_s^T f(r) dr } |\Delta Z|^2 ds+  2 \int_t^T  e^{As-\int_s^T f(r)dr } |\Delta Y| |g(s,0)| \|\Delta \tty \|_{L^2} ds \\
& \quad +
2 \int_t^T e^{As-\int_s^T f(r)dr } |\Delta Y| \|c \|_{L^2} \|\Delta \tty \|_{L^2} ds -
2\int_t^T e^{As-\int_s^T f(r) dr } \Delta Y \Delta Z dB(s)\\
\le  &\quad  C \int_t^T e^{As-\int_s^T f(r)dr } |\Delta Y|^2 ds + \frac12 \int_t^T e^{As-\int_s^T f(r) dr } |\Delta Z|^2 ds\\
 &\quad  -2\int_t^T e^{As-\int_s^T f(r) dr } \Delta Y \Delta Z dB(s) +  2\int_t^T  e^{As-\int_s^T f(r) dr } (|\Delta Y| |g(s,0)| \|\Delta \tty \|_{L^2}  ) ds\\
  &\quad  + \int_t^T  e^{As-\int_s^T f(r) dr } (|\Delta Y|^2 \|c \|^2_{L^2} +  \|\Delta \tty \|^2_{L^2}) ds.
\end{align*}
Taking expectation on both sides of the above inequality, we have
\begin{align*}
&\int_t^T (A-C+ f(s) ) e^{As-\int_s^T f(r) dr } \|\Delta Y(s)\|_{L^2}^2 dr \\
 \le&\ 2\int_t^T  e^{As-\int_s^T f(r) dr } \E[|\Delta Y| |g(s,0)|] \|\Delta \tty \|_{L^2}   ds \\
 &+ \int_t^T  e^{As-\int_s^T f(r) dr } (\|\Delta Y\|_{L^2}^2 \|c \|^2_{L^2} +  \|\Delta \tty \|^2_{L^2}) ds \\
\le&\int_t^T  e^{As-\int_s^T f(r) dr }\Big[ \Big(\E[|\Delta Y| |g(s,0)|]\Big)^2  + \| \Delta \tty \|_{L^2}^2 \Big] ds \\
&+ \int_t^T  e^{As-\int_s^T f(r) dr } (\|\Delta Y\|_{L^2}^2 \|c \|^2_{L^2} +  \|\Delta \tty \|^2_{L^2}) ds \\
\le&\int_t^T  e^{As-\int_s^T f(r) dr } \Big( \|g(s,0)\|_{L^2}^2 +\|c \|^2_{L^2}  \Big) \|\Delta Y\|_{L^2}^2 ds \\
&+ \int_t^T  e^{As-\int_s^T f(r) dr }   \| \Delta \tty \|_{L^2}^2 ds.
\end{align*}
Therefore,  choosing a sufficiently  large number $A$ such that $A-C>1$, we obtain a contraction and then the well-posedness of \eqref{e7}.

\ \ \ \ \ \ Now BSDE \eqref{e7} can be written as the following classical BSDE
\be
Y(s) = \xi + \int_s^T \Big( \alpha(r) Y(r) + \beta(r) Z(r)  + h'(r)  \Big) dr - \int_s^T Z(r) dB(r),
\ee
with
$
h'(r)= \tE[g(r, \tc(r)) \tY(r) ]+ h(r).
$
Thus it is standard that
\be
\begin{split}
\|Y\|_{\bS^2}^2 + \|Z\|_{\bH^2}^2  &\le C(\|\xi\|_{L^2}^2+ \|\int_t^T|h'(r)|dr\|^2_{L^2})\\
&\le C(\|\xi\|_{L^2}^2+ \|\int_t^T|h(r)|dr\|^2_{L^2} +\|\int_t^T| \tE[g(r, \tc(r)) \tY(r) ] |dr\|^2_{L^2} ).
\end{split}
\ee
Furthermore, similar to the proof of inequality \eqref{gy}, we have
\be
\|\int_t^T| \tE[g(r, \tc(r)) \tY(r) ] |dr\|^2_{L^2} \le C[ \int_t^T \|g(r,0)\|_{L^2}^2 \|Y \|^2_{\bS^2, [t,r]} dr +  \int_t^T \|c(r)\|_{L^2}^2 \|Y \|^2_{\bS^2, [t,r]} dr ].
\ee
Then, using Gronwall's inequality,  we obtain the desired estimate~\eqref{l-bd}.

%
%
%

\end{proof}

\ \ \ \ \ \ To study the differentiability of corresponding FBSDEs, for any $(t,\eta)\in[0,T]\times \MM^D_2$, we denote by $(Y^{\eta_t}, Z^{\eta_t})$ the solution of the following path-dependent mean-field BSDE, for $s\in [t,T],$
\be \label{e1}
Y (s)= \Phi(B_T^{\eta_t}, \ML_{B_T^{\eta_t} }) + \int_s^T f( B_r^{\eta_t}, Y (r),  Z (r)  , \ML_{B_r^{\eta_t}},  \ML_{Y (r) }) dr- \int_s^T Z (r) dB(r).
\ee
On the other hand, for any $\gamma \in \D,$ let $(Y^{\gamma_t, \eta_t} , Z^{\gamma_t,\eta_t} )$ solve the associated path-dependent BSDE: for $s\in [t,T],$
\be \label{e2}
\MY{(s)}= \Phi(B_T^{\gamma_t}, \ML_{B_T^{\eta_t} }) + \int_s^T\!\!\! f( B_r^{\gamma_t}, \MY(r), \MZ(r) ,  \ML_{B_r^{\eta_t}}, \ML_{Y^{\eta_t}(r)}  ) dr -  \int_s^T \MZ(r) dB(r).
\ee

Note that under Assumption {\bf (H0)}, the functional
\be
\hat{f}(r, y, z, \nu):= f( B_r^{\eta_t}, y , z , \ML_{B_r^{\eta_t}}, \nu ), \quad  (r,y,z,\nu)\in[t,T]\times \R \times \R^d \times \op_2(\R),
\ee
is uniformly Lipschitz continuous in $(y,z)\in \R \times \R^d$. According to \cite[Theorem 4.23]{CD1}, BSDE \eqref{e1} is well posed with $(Y^{\eta_t},Z^{\eta_t}, \ML_{Y^{\eta_t}}) \in \bS^2 \times \bH^2 \times \op_2(\R).$ Then \eqref{e2} is a well-defined  classical BSDE with $(Y^\gl, Z^\gl) \in \bS^p \times \bH^p$ for any $p\ge 1.$ In the following, we write $\Theta^{\eta_t}_r:=( B_r^{\eta_t}, Y^{\eta_t}(r),  Z^{\eta_t}(r)),  \Theta^{\gamma_t, \eta_t}_r:=(B_r^{\gamma_t}, Y^{\gamma_t, \eta_t}(r), Z^{\gamma_t,\eta_t}(r))$, $\ML_{\Theta^{\eta_t}_r}:=(\ML_{B_r^{\eta_t}}, \ML_{Y^{\eta_t}(r)})$ and $(Y,Z):=(Y(t), Z(t))$ if no confusion is raised.
Then we have the following basic estimates for BSDEs \eqref{e1} and \eqref{e2}.

\begin{lem}\label{unibd}
Assume that $(\Phi, f)$ satisfies {\bf (H0)}. For any $K>0$ and
 $(\gamma, \eta), ( \gamma' , \eta')  \in \D \times  \MM_2^D $ such that $|||\ML_{\eta_t}|||, |||\ML_{\eta'_t}||| \le K,$ we have for any $p\ge 1,$
\bea \label{eta1}
&&\|  (Y^{  \eta_t} , Z^{ \eta_t}) \|_{\bS^2 \times \bH^2} \le C(1+    \| \eta_t  \|_{\bS^2} ), \\ \label{gam1}
&&\|  (Y^{\gamma_t, \eta_t} ,  Z^{\gamma_t, \eta_t}) \|_{\bS^p \times \bH^p} \le C_p(1+ \| \gamma_t\| +  \| \eta_t  \|_{\bS^2} ), \\
 \label{eta2}
 &&\| ( Y^{  \eta_t} -Y^{  \eta'_t}  , Z^{ \eta_t} - Z^{  \eta'_t}  ) \|_{\bS^2 \times \bH^2} \le C_K  \| \eta_t-\eta'_t  \|_{\bS^2},  \quad\quad \text{and }\\ \label{gam2}
&&\| ( Y^{ \gamma_t, \eta_t} -Y^{ \gamma'_t,  \eta'_t},  Z^{ \gamma_t, \eta_t} - Z^{ \gamma'_t,  \eta'_t}  ) \|_{\bS^p \times \bH^p} \le C_{K,p} ( \|\gamma_t-\gamma'_t\| + W_2(\ML_{\eta_t} , \ML_{\eta'_t}  ) ),
\eea
where $(C ,C_p)$ does not depend on $(\gam,\eta)$, and $(C_K, C_{K,p})$ does not depend on $(\gam,\gam')$.

\end{lem}

\begin{rem}\label{law-dep}
According to inequality \eqref{gam2}, $( Y^{ \gamma_t, \eta_t}, Z^{ \gamma_t, \eta_t})$ and $(Y^{ \gamma_t, \eta'_t}, Z^{ \gamma_t, \eta'_t})$ are indistinguishable if $\ML_{\eta_t}=\ML_{\eta'_t} ,$ which implies the following definition is well-posed
\be
(Y^{ \gamma_t, \ML_{\eta_t}}, Z^{ \gamma_t, \ML_{\eta_t}} ) := ( Y^{ \gamma_t, \eta_t}, Z^{ \gamma_t, \eta_t}).
\ee

\end{rem}

The proof of Lemma \ref{unibd} is rather lengthy, which follows from Lemma \ref{linear} and is left in the appendix.

\subsection{First-order differentiability}

In this subsection we assume that {\bf(H1)} holds for $(\Phi, f).$
For any $(\gam,\eta)\in[0,T]\times \MM^D_2,$ we consider the first order differentiability of $Y^{\gl}=Y^{\gam_t,\ML_{\eta_t}}$ with respect to $\gam_t$ and $\ML_{\eta_t}.$ For the differentiability in $\gam_t,$
let
\be
\begin{split}
&\hat{f}(\ome_s,y,z  ):= f (\ome_s, y , z , \ML_{B_s^{\eta_t}}, \ML_{Y^{\eta_t}(s)}  ),\\
&   \hat{\Phi}(\ome_T  )  :=  \Phi(\ome_T , \ML_{B_T^{\eta_t} }), \  \forall (s,\ome,y,z)\in[t,T] \times \D \times \R \times \R^d,
 \end{split}
\ee
and then the solution $Y^{\gamma_t, \eta_t}{(s)}$ to equation \eqref{e2} solves the following path-dependent BSDE
\be\label{e3}
\hY{(s)}= \hat{\Phi}(B_T^{\gamma_t} )  + \int_s^T \hat{f}(B^{\gamma_t}_r, \hY(r ), \hZ(r  )  ) dr-  \int_s^T \hZ (r) dB(r).
\ee
Define
$
\hat{u}_{ \eta_t } (t , {\gamma} ) := Y^{\gamma_t, \eta_t} (t).
$ If $f$ and $\Phi$ are regular enough, according to \cite[Theorem 4.5]{PW16} , $\hat{u}_{\eta_t} (t  , {\gamma} ) $ is twice vertically differentiable at $(t, \gamma),$ and moreover for any $s \ge t,$
\be
\hu_{ \eta_t }(s, B^{\gamma_t})= Y^{\gamma_t, \eta_t} (s ) , \ \ \  \partial_{\gamma_t} \hu_{ \eta_t }(s, B^{\gamma_t}) = Z^{\gamma_t, \eta_t} (s ) .
\ee
Furthermore, $\hu_{ \eta_t }(t, \gamma)$ is the unique solution to the following semilinear PPDE
\be
\left\{
\begin{array}{l}
\partial_t \hu_{ \eta_t }(t, \gamma) + \frac12 \text{Tr}\left[  \partial_{\omega }^2 \hu_{ \eta_t }(t, \gamma)\right] + \hat{f}(\gamma_t , \hu_{ \eta_t }(t, {\gamma}) , \partial_\omega \hu_{ \eta_t }(t,  {\gamma})  )=0, \\[2mm]
 \hu_{ \eta_t }(T, \gamma ) = \hat{\Phi}( \gamma ),\ \ \ (t,\gam)\in [0,T]\times \mC.
\end{array}
\right.
\ee

In the following, we denote by $\partial_{(t,\ome,y,z,\mu,\nu, {\ome_\tau}, {\mu_\tau})}f$ the derivative vector 
$$(\partial_t f, \partial_\ome f, \partial_y f, \partial_z f, \partial_\mu f,\partial_\nu f, \partial_{\ome_\tau} f, \partial_{\mu_\tau}f ).$$

\begin{rem}\label{H2}
Assume that $\Phi: \D \mapsto \R$ is twice continuously strongly vertically differentiable and satisfies the following locally Lipschitz continuous condition: for any $t\in[0,T]$ and $\phi= \Phi, \partial_{\ome_t} \Phi,  \partial_{\ome_t}^2 \Phi $,
\be\label{weakphi}
|\phi(\ome_T)-\phi(\ome'_T)| \le C(1+\| \ome_T |^k+\|\ome'_T\|^k) \|\ome_T- \ome'_T\|, \quad \forall \ (\ome, \ome') \in \D^2.
\ee
Then, the main result \cite[Theorem 4.5]{PW16}  is still true.  
For the reader's convenience,   the proof is sketched  in the appendix,  using  our partial It\^o-Dupire formula.

\end{rem}

\begin{prop}\label{s-bdd1}
Let  $(f,\Phi)$ satisfy Assumption {\bf(H1)}. Then for any $\tau \le t,$ $(Y^{\gl}(s),Z^{\gl}(s))$ is strongly vertically differentiable at $(\tau,t,\gam)$. The derivative $(\partial_{\ome_\tau}Y^{\gl}, \partial_{\ome_\tau}Z^{\gl}) \in \bS^p([t,T],\R^d) \times \bH^p([t,T],\R^{d\times d}), \  \forall\ p\ge1$, is the unique solution to BSDE
\be\label{s-e4}
\begin{split}
\MY(s)  =\ &  \partial_{\ome_\tau} \Phi (B^{\gam_t}, \ML_{B^{\eta_t}})  + \int_s^T   \partial_{\ome_\tau} f(\Theta^\gl_r, \ML_{\Theta_r^{\eta_t}}) dr + \int_s^T  \partial_y f(\Theta^\gl_r, \ML_{\Theta_r^{\eta_t}}) \MY (r) dr\\
&+ \int_s^T \partial_z f(\Theta^\gl_r, \ML_{\Theta_r^{\eta_t}})\MZ  (r) dr
- \int_s^T  \MZ (r) d B(r), \ \ s\in[t,T].
\end{split}
\ee
Furthermore, since $(\partial_{\ome_\tau}Y^{\gl}, \partial_{\ome_\tau}Z^{\gl})$ is independent of $\MF_t$, we have that for any $K>0,$ and any $(  \gam , \eta), (\gam' , \eta')\in  \D \times \MM_2^D \text{ such that } |||\ML_{\eta_t}|||, |||\ML_{\eta'_t}||| \le K,$
\be\label{3.7}
\begin{split}
 &\|( \partial_{\ome_\tau}Y^{\gl} , \partial_{\ome_\tau}Z^{\gl}) \|_{\bS^p \times \bH^p}  <C_p ,\\
 &\| (\partial_{\ome_\tau}Y^{\gl}- \partial_{\ome_\tau} Y^{\gam'_t, \eta'_t}, \partial_{\ome_\tau}Z^{\gl}-\partial_{\ome_\tau} Z^{\gam'_t, \eta'_t}) \|_{\bS^p \times \bH^p}  < C_{K,p}(\|\gam_t-\gam'_t\| + W_2(\ML_{\eta_t} , \ML_{\eta'_t}  ) ),
\end{split}
\ee
for some positive constants $C_p$ and $C_{K,p}$.

\end{prop}

\begin{proof}
In view of Assumption {\bf(H1)} and Lemma \ref{linear},
we see that equation \eqref{s-e4} has a unique solution $(\partial_{\ome_\tau}Y, \partial_{\ome_\tau}Z) \in \bS^p \times \bH^p, \ \forall p\ge 1.$
Here,  we  consider the one-dimensional case for simplicity. For any $h>0,$ recall that $\gam^{\tau,h}=\gam+h 1_{[\tau,T]}$.  Set
\be
\begin{split}
&\gam':=\gam^{\tau,h}, \quad \Delta_h Y:=\frac1h (Y'-Y)  := \frac1h (Y^{\gam^{\tau,h}_t,\eta_t} - Y^{\gl}), \quad \text{and}\\
& \quad   \Delta_h Z:=\frac1h (Z'-Z)  := \frac1h (Z^{\gam^{\tau,h}_t,\eta_t} - Z^{\gl}).
\end{split}
\ee
Then we know that $(\Delta_h Y, \Delta_h Z)$ solves the following BSDE
\be\nonumber
\begin{split}
\Delta_h Y (s) &=  \frac1h(\Phi'-\Phi) + \frac1h \int_s^T [f(\Theta^{\gam',\eta}_r, \ML_{\Theta^{ \eta}_r} ) - f(\Theta^{\gam,\eta}_r, \ML_{\Theta^{  \eta}_r} ) ] dr - \int_s^T \Delta_h Z(r) d B(r) \\
&= : \Delta_h \Phi + \int_s^T \Big( a_r \Delta_h Y(r) + b_r \Delta_h Z(r)  + \Delta_h  f \Big) dr  - \int_s^T \Delta_h Z(r) d B(r) ,
\end{split}
\ee
where
\beaa
&&\Phi':=\Phi(B^{\gam'},\ML_{B^\eta}),   \ \ \Phi:=\Phi(B^{\gam},\ML_{B^\eta}), \ \
\Delta_h \Phi:= \int_0^1 \partial_{\gam_{\tau} } \Phi(B^{\gam^{\tau,h\theta}}, \ML_{B^{\eta}} )\ d\theta, \\
&&a_r := \int_0^1 \partial_y {f} (B^{\gam'}_r , Y+ \theta(Y'-Y), Z', \ML_{\Theta^\eta_r} )\ d\theta,  \ \
b_r :=  \int_0^1 \partial_z {f} (B^{\gam'}_r , Y , Z+\theta(Z'- Z), \ML_{\Theta^\eta_r} )\ d\theta ,\\
&& \quad \text{and} \quad \Delta_h f :=\frac{1}{h} f( B^{\ome}_r ,  Y, Z, \ML_{B^{\eta}_r} , \ML_{Y}  )\Big|_{\ome=\gam}^{\ome=\gam'}   = \int_0^1 \partial_{\ome_\tau} f(B^{\gam^{\tau,h\theta}}, Y, Z, \ML_{B^{\eta}_r} , \ML_{Y} )\ d\theta.
\eeaa
Then $(\delta Y, \delta Z):=(\Delta_h Y - \partial_{\ome_\tau} Y, \Delta_h Z - \partial_{\ome_\tau} Z)$ satisfies BSDE
\be\nonumber
\begin{split}
\delta Y(s)=&\ (\Delta_h \Phi - \partial_{\ome_\tau}\Phi)+ \int_s^T \left(a_r \delta Y + b_r \delta Z+ (\Delta_h f- \partial_{\ome_\tau} f(\Theta^\gl_r, \ML_{\Theta_r^{\eta_t}}))\right) dr\\
\ \ & + \int_s^T [ (a_r-\partial_y f(\Theta^\gl_r, \ML_{\Theta_r^{\eta_t}}))\partial_{\ome_\tau} Y + (b_r-\partial_z f (\Theta^\gl_r, \ML_{\Theta_r^{\eta_t}})) \partial_{\ome_\tau}Z ] dr- \int_s^T \delta Z dB(r).
\end{split}
\ee

According to standard estimate for BSDEs (or Lemma \ref{linear} for $p=2$) and Lemma \ref{unibd}, we have
\be\nonumber
\begin{split}
\|\delta  Y     \|_{\bS^p}^p+ \|\delta Z \|_{\bH^p}^p & \le C \| \Delta_h \Phi - \partial_{\ome_\tau}\Phi \|_{L^p}^p   + \|\int_t^T |\Delta_h f- \partial_{\ome_\tau} f(\Theta^\gl_r, \ML_{\Theta_r^{\eta_t}})| dr\|_{L^p}^p + O(|h|)\\
& \le   O(|h|),
\end{split}
\ee
and thus the strongly vertical differentiability.

\end{proof}

\ \ \ \ \ \ To show the differentiability of $Y^{\gamma_t, \eta_t}$ with respect to $\eta_t,$ we follow a similar argument as in the state-dependent case for SDEs made in \cite{BLP17}. Firstly we show that $Y^{\gam_t,\eta_t}$ is G\^ateaux differentiable in $\eta_t$ in the sense of \eqref{gat} and Remark \ref{d-ext}. To this end, we need to prove that for any $\xi \in L^2(\MF_t, \R^d)$ and $\eta_t^{\lambda \xi}:= \eta_t+ \lambda \xi 1_{[t,T]}$, $\lambda >0,$ the following limit exits in $\bS^2([t,T],\R^d),$
\be
\partial_\eta Y^{\gamma_t, \eta_t} (\xi) := \lim_{\lambda \rightarrow 0} \frac{1}{\lambda}(Y^{\gamma_t, \eta_t^{\lambda \xi}}- Y^{\gam_t, \eta_t}).
\ee
Then we show that $\partial_\eta Y^{\gamma_t, \eta_t} (\cdot): L^2(\MF_t,\R^d) \mapsto \bS^2([t,T],\R^d)$ is a bounded linear operator, and moreover, it is continuous in the following sense: for any $\zeta \in L^2(\MF_t,\R^d), $ $\partial_\eta Y^{\gam_t,\eta_t+ \zeta 1_{[t,T]}}$ converges to $\partial_\eta Y^{\gam_t,\eta_t}$ in the sense of operators as $\zeta$ goes to zero. In view of Remark \ref{fregat}, we see that $ Y^{\gam_t,\eta_t} $ is Fr\'echet (vertically) differentiable in the sense of \eqref{fre} and Remark \ref{d-ext}. To this end, consider the following linear BSDE
\be\label{e5'}
\begin{split}
\MY^{\gl ,\xi}(s) &= \tE [ \partial_{\mu_t} \Phi (B^{\gam_t}, \ML_{B^{\eta_t}}, \tB^{\teta_t}) \txi ] + \int_s^T \tE [ \partial_{\mu_t} f(\Theta^\gl_r, \ML_{\Theta_r^{\eta_t}}, \tB^{\teta_t} ) \txi ] dr \\
&\ \ \ + \int_s^T  \partial_y f(\Theta^\gl_r, \ML_{\Theta_r^{\eta_t}}) \MY^{\gl ,\xi} (r) dr+ \int_s^T  \partial_z f(\Theta^\gl_r, \ML_{\Theta_r^{\eta_t}}) \MZ^{\gl ,\xi}  (r) dr\\
&\ \ \ +  \int_s^T   \tE [ \partial_{\nu} f(\Theta^\gl_r, \ML_{\Theta_r^{\eta_t}}, \tY^{\teta_t} ) ( \partial_{\ome_t} \tY^{\teta_t, \ML_{\eta_t}} \txi  + \tilde{\MY}^{\teta_t, \txi})(r) ] dr\\
& \ \ \
- \int_s^T  \MZ^{\gl ,\xi}(r) d B(r), \quad s\in [t,T].
\end{split}
\ee
Here,  $(\tB, \teta, \txi, \tY^{\teta},\partial_{\ome_t} \tY^{\teta_t, \ML_{\eta_t}},\tMY^{\teta_t, \txi} )$ is an independent copy of $(B, \eta, \xi, Y^{\eta_t},\partial_{\ome_t} Y^{\gam_t, \ML_{\eta_t}}|_{\gam=\eta}, \MY^{\eta_t, \xi})$,
and $\MY^{\eta_t, \xi}$ satisfies the following linear mean-field BSDE
\be\label{e6}
\begin{split}
\MY^{\eta_t, \xi}(s) &= \tE [ \partial_{\mu_t} \Phi (B^{\eta_t}, \ML_{B^{\eta_t}}, \tB^{\teta_t}) \txi ] + \int_s^T \tE [ \partial_{\mu_t} f(\Theta^{\eta_t}_r, \ML_{\Theta_r^{\eta_t}}, \tB^{\teta_t}_r ) \txi ] dr \\
&\ \ \  + \int_s^T  \partial_y f(\Theta^{\eta_t}_r, \ML_{\Theta_r^{\eta_t}}) \MY^{{\eta_t} ,\xi} (r) dr+ \int_s^T  \partial_z f(\Theta^{\eta_t}_r, \ML_{\Theta_r^{\eta_t}}) \MZ^{{\eta_t} ,\xi}  (r) dr\\
&\ \ \ + \int_s^T   \tE [ \partial_{\nu} f(\Theta^{\eta_t}_r, \ML_{\Theta_r^{\eta_t}}, \tY^{\teta_t} ) ( \partial_{\ome_t} \tY^{\teta_t, \ML_{\eta_t}} \txi  + \tilde{\MY}^{\teta_t, \txi})(r) ] dr\\
&\ \ \
- \int_s^T  \MZ^{{\eta_t} ,\xi}(r) d B(r), \quad s\in [t,T].
\end{split}
\ee



\begin{lem}\label{e6est}
For any $\xi \in L^2(\MF_t,\R^d),$
there exits a unique solution $(\MY^{\eta_t, \xi}, \MZ^{\eta_t, \xi}) \in \bS^2([t,T]) \times \bH^2([t,T],\R^d)$ to BSDE \eqref{e6}. Moreover, $(\MY^{\eta_t, \xi}, \MZ^{\eta_t, \xi})$ is linear in $\xi$,  and we have
\be\label{est}
\| (\MY^{\eta_t, \xi} , \MZ^{\eta_t, \xi}) \|_{\bS^2 \times \bH^2} \le C \|\xi \|_{L^2}
\ee
for some constant $C$.

\end{lem}

\begin{proof}

By Lipschitz continuity of $(\partial_{\mu_t} \Phi, \partial_{\mu_t} f), $ we have
$$
\tE [ \partial_{\mu_t} \Phi (B_T^{\eta_t}, \ML_{B_T^{\eta_t}}, \tB_T^{\teta_t}) \txi ]\in L^2(\MF_T),\ \ \
\tE [ \partial_{\mu_t} f(\Theta^{\eta_t}_r, \ML_{\Theta_r^{\eta_t}}, \tB^{\teta_t}_r ) \txi ] \in  L^2(\MF_r).
$$
Since $f$ is uniformly Lipschitz continuous in $(y,z),$ $\partial_{(y,z)} f(\Theta^{\eta_t}_r, \ML_{\Theta_r^{\eta_t}})$ is uniformly bounded.
Set $g(r,x):= \partial_{\nu} f(\Theta^{\eta_t}_r, \ML_{\Theta_r^{\eta_t}},x )$. In view of Lemma \ref{unibd} and Assumption {\bf(H1)}, we see that $g(\cdot,0)\in \bH^2.$
Then by Lemma \ref{linear}, to show the well-posedness of linear mean-field BSDE \eqref{e6},
we only need to check the following
$$\int_t^T \left|\tE [ \partial_{\nu} f(\Theta^{\eta_t}_r, \ML_{\Theta_r^{\eta_t}}, \tY^{\teta_t} ) ( \partial_{\ome_t} \tY^{\teta_t, \ML_{\eta_t}} \txi)] \right|dr \in L^2(\MF_T).$$
 Let
 \be
 F_2(t, x,y,z, \mu, \nu  ):= \tE [ \partial_{\nu} f(t, x,y,z, \mu, \nu , \tY^{\teta_t}(r) ) ( \partial_{\ome_t} \tY^{\teta_t, \ML_{\eta_t}}(r) \txi)].
 \ee
Then by Lipschitz continuity of $\partial_\nu f$ and Proposition \ref{s-bdd1}, we have
\begin{align*}
 &F_2(t, x,y,z, \mu, \nu  )\\
 &\ \ = \tE \Big[ \tE_{\tMF_t} [ \partial_{\nu} f(t, x,y,z, \mu, \nu , \tY^{\gam_t, \ML_{\eta_t}}(r) ) ( \partial_{\ome_t} \tY^{\gam_t, \ML_{\eta_t}} (r) )]\Big|_{\gam_t=\teta_t}   \txi \Big]\\
 &\ \  \le C \tE \Big[ \tE_{\tMF_t} [  |\tY^{\gam_t, \ML_{\eta_t}}| | \partial_{\ome_t} \tY^{\gam_t, \ML_{\eta_t}}(r) | ]\left|_{\gam_t=\teta_t} \right. \txi \Big] + \partial_{\nu} f(t, x,y,z, \mu,\nu, 0)\tE\Big[ \tE_{\tMF_t} [\tY^{\gam_t, \ML_{\eta_t}}(r) | ]\left|_{\gam_t=\teta_t} \right. \txi \Big]  \\
 &\ \ \le C \tE\left[ \tE_{\tMF_t}[ (1+\|\gamma_t\|) ]  \Big|_{\gam_t=\teta_t} \txi   \right] + C \partial_{\nu} f(t, x,y,z, \mu,\nu, 0)\\
  &\ \ \le C  + C \partial_{\nu} f(t, x,y,z, \mu,\nu, 0),
\end{align*}
where we have applied Lemmas \ref{unibd} in the second  inequality. Then according to Lemma \ref{unibd} again, we have $\partial_{\nu} f(\Theta^{\eta_t}_r, \ML_{\Theta^{\eta_t}_r},0 ) \in \bH^2,$  and thus the well-posedness of \eqref{e6}. For inequality \eqref{est}, similar to the proof of Lemma \ref{unibd}, we have
\begin{align*}
&\| \MY^{\eta_t, \xi} \|_{\bS^2}^2+\| \MZ^{\eta_t, \xi} \|^2_{\bH^2}\\
& \ \ \le C   \E\Big[|\tE [ \partial_{\mu_t} \Phi (B^{\eta_t}, \ML_{B^{\eta_t}}, \tB^{\teta_t}) \txi ]|^2 +\int_s^T |\tE [ \partial_{\mu_t} f(\Theta^{\eta_t}_r, \ML_{\Theta_r^{\eta_t}}, \tB^{\teta_t} ) \txi ]|^2 dr   \\
& \ \ \ \ \ \ + \int_s^T   |\tE  \partial_{\nu} f(\Theta^{\eta_t}_r, \ML_{\Theta_r^{\eta_t}}, \tY^{\teta_t} ) ( \partial_{\ome_t} \tY^{\teta_t, \ML_{\eta_t}} \txi)|^2 dr \Big]\\
& \ \ \le C \Big(  (\tE\|\tB^{\teta_t}\| \ |\txi|)^2 + \|\xi \|_{L^2}^2 \E|\partial_{\mu_t} \Phi (B^{\eta_t}, \ML_{B^{\eta_t}},  0 )|^2\\
& \ \ \ \ \ \ + \|\xi \|_{L^2}^2 \E \int_s^T |\partial_{\mu_t} f(\Theta^{\eta_t}_r, \ML_{\Theta_r^{\eta_t}},0  ) |^2 dr + \E \int_s^T | \tE[\tY^{\teta_t}  \partial_{\ome_t} \tY^{\teta_t, \ML_{\eta_t}} \txi ] |^2 dr\\
  &\ \ \ \ \ \ + \|\xi \|_{L^2}^2 \E \int_s^T |\partial_{\nu} f(\Theta^{\eta_t}_r, \ML_{\Theta_r^{\eta_t}},0)|^2 dr \Big)
 \le C  \|\xi \|_{L^2}^2 .
\end{align*}

\end{proof}

Since BSDE \eqref{e6} is well-posed, so is BSDE \eqref{e5'}.
In conclusion, we have
\begin{coro}
There exits a unique solution $(\MY^{\gam_t,\eta_t, \xi}, \MZ^{\gam_t,\eta_t, \xi}) \in \bS^2([t,T]) \times \bH^2([t,T],\R^d)$ to BSDE \eqref{e5'}. Moreover,
\be
(\MY^{ \eta_t, \xi}, \MZ^{\eta_t, \xi})=(\MY^{\gam_t,\eta_t, \xi}, \MZ^{\gam_t,\eta_t, \xi})|_{\gam=\eta}.
\ee
\end{coro}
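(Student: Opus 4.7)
The plan is to treat \eqref{e5'} as a \emph{classical} (non mean-field) linear BSDE in the unknown $(\MY^{\gam_t,\eta_t,\xi},\MZ^{\gam_t,\eta_t,\xi})$. The mean-field coupling in \eqref{e5'} enters only through external quantities that are already constructed: the processes $\tY^{\teta_t}$, $\partial_{\ome_t}\tY^{\teta_t,\ML_{\eta_t}}$ (both independent of $\MF_t$) and $\tMY^{\teta_t,\txi}$. The first two are controlled via Lemma~\ref{unibd} and Lemma~\ref{bdd1}, and $\tMY^{\teta_t,\txi}$ has just been constructed in Lemma~\ref{e6est}. Hence the tilde-expectations appearing in \eqref{e5'} are \emph{data}, not unknowns.

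First I would check that the terminal value and all forcing terms of \eqref{e5'} lie in the appropriate spaces. By Assumption~{\bf (H1)}, $\partial_{\mu_t}\Phi$ and $\partial_{\mu_t}f$ are Lipschitz, so the same Cauchy--Schwarz argument as in the proof of Lemma~\ref{e6est} gives
\[
\tE[\partial_{\mu_t}\Phi(B^{\gam_t},\ML_{B^{\eta_t}},\tB^{\teta_t})\txi]\in L^2(\MF_T),
\qquad
\tE[\partial_{\mu_t}f(\Theta^{\gl}_r,\ML_{\Theta^{\eta_t}_r},\tB^{\teta_t}_r)\txi]\in\bH^2.
\]
For the $\partial_\nu f$ term, split it into the contribution from $\partial_{\ome_t}\tY^{\teta_t,\ML_{\eta_t}}\txi$ and from $\tMY^{\teta_t,\txi}$; Lemmas~\ref{bdd1} and~\ref{e6est} give uniform $\bS^p$-bounds for each, and the Lipschitz assumption on $\partial_\nu f$ handles the growth in $\nu$. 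Thus the total driver is in $\bH^2$ and the terminal condition in $L^2$.

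Next, since $\partial_y f$ and $\partial_z f$ are bounded by Assumption~{\bf (H1)(ii)}, equation \eqref{e5'} is, once the external tilde-expectations are frozen, a standard linear BSDE with bounded $(y,z)$-coefficients and $L^2$-integrable data. Classical BSDE theory (or Lemma~\ref{linear} applied with $g\equiv 0$) then yields a unique solution $(\MY^{\gam_t,\eta_t,\xi},\MZ^{\gam_t,\eta_t,\xi})\in\bS^2([t,T])\times\bH^2([t,T],\R^d)$, together with the estimate $\|(\MY^{\gam_t,\eta_t,\xi},\MZ^{\gam_t,\eta_t,\xi})\|_{\bS^2\times\bH^2}\le C\|\xi\|_{L^2}$ (the constant depends on $K$ through $|||\ML_{\eta_t}|||\le K$, as in Lemmas~\ref{unibd}--\ref{bdd1}).

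For the identification, I would substitute $\gam=\eta$. Then $B^{\gam_t}|_{\gam=\eta}=B^{\eta_t}$ and $\Theta^{\gam_t,\eta_t}_r|_{\gam=\eta}=\Theta^{\eta_t}_r$, so termwise comparison shows that \eqref{e5'} reduces \emph{exactly} to \eqref{e6} (the external data $\tY^{\teta_t}$, $\partial_{\ome_t}\tY^{\teta_t,\ML_{\eta_t}}$ and $\tMY^{\teta_t,\txi}$ are unchanged by the substitution, as they depend only on $\eta_t$ and $\xi$). Uniqueness from Lemma~\ref{e6est} then forces $(\MY^{\gam_t,\eta_t,\xi},\MZ^{\gam_t,\eta_t,\xi})|_{\gam=\eta}=(\MY^{\eta_t,\xi},\MZ^{\eta_t,\xi})$. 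The only nontrivial point in the whole argument is the integrability of the $\partial_\nu f$ term, which has already been resolved in Lemma~\ref{e6est}; everything else is bookkeeping.
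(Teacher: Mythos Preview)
Your proposal is correct and follows precisely the route the paper takes: once Lemma~\ref{e6est} supplies $(\MY^{\eta_t,\xi},\MZ^{\eta_t,\xi})$, equation~\eqref{e5'} becomes a classical linear BSDE with bounded $(y,z)$-coefficients and $L^2$ data, so standard theory gives well-posedness, and the substitution $\gam=\eta$ plus uniqueness yields the identification. The paper states this in one line just before the corollary and gives no further proof; your version simply fills in the integrability checks.
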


\begin{lem}\label{gat1} The map 
$\xi\mapsto \MY^{\gl, \xi}$ is a bounded linear operator from $L^2({\MF_t},\R^d)$ to $\bS^2([t,T])$. Moreover, it is the G\^ateaux derivative of $Y^{\gamma_t, \eta_t}$ with respect to $\eta_t $ in the following sense
\be
\MY^{\gl, \xi} =  \lim_{\lambda \rightarrow 0} \frac{1}{\lam}(Y^{\gamma_t, \eta_t^{\lambda \xi}}- Y^{\gam_t, \eta_t}) \quad  \text{strongly in $\bS^2([t,T])$}.
\ee
In particular, $\MY^{\gl, \xi}(s)$ is the G\^ateaux derivative of $Y^{\gamma_t, \eta_t}(s)$ in the sense of \eqref{gat}.
\end{lem}

\begin{proof} Since $\MY^{\eta_t, \xi} $ is linear in $\xi,$ we see that $(\MY^{\gl ,\xi},\  \MZ^{\gl ,\xi})$ is also linear in $\xi$.  Moreover,  we have the following estimate
	\be\label{y-linear}
	\| (\MY^{\gl, \xi} , \MZ^{\gl, \xi} )\|_{\bS^2\times \bH^2} \le C \|\xi \|_{L^2}.
	\ee
Therefore,  we have the first assertion. 
	
\ \ \ \ \ \ 	In the following, we omit the fixed subscript $t$ and write $(Y,Z):=(Y(r),Z(r))$ if no confusion raised. Besides, the constant $C$ may change from line to line.  Set
\be
\begin{split}
&\Delta_\lambda Y:= \frac{1}{\lam}(Y^{\gam, \eta^{\lambda\xi}} - Y^{\gam,\eta}),\ \ \Delta_\lambda Z:= \frac{1}{\lam}(Z^{\gam, \eta^{\lambda\xi}} - Z^{\gam,\eta}), \quad \quad \text{and }\\
  &\quad \Delta_\lambda \Phi := \frac{1}{\lambda} [\Phi(B_T^{\gamma}, \ML_{B_T^{\eta^{\lambda \xi}} })- \Phi(B_T^{\gamma}, \ML_{B_T^{\eta } } )].
\end{split}
\ee
Then according to Lemma \ref{unibd}, we have
\be\label{bd-dy}
\| \Delta_\lambda Y \|_{\bS^2} + \| \Delta_\lambda Z \|_{\bH^2} \le C \frac{1}{\lam} \|\eta_t^{\lam \xi} - \eta_t \|_{\bS^2}\le C \|\xi \|_{L^2}.
\ee
In view of BSDE \eqref{e2}, we see that $(\Delta_\lambda Y, \Delta_\lambda Z)$ satisfies the following linear mean-field BSDE
\be\label{e-diff}
\begin{split}
\Delta_\lambda Y  &=   \Delta_\lambda \Phi + \int_s^T \left[ \alpha(r) \Delta_\lambda Y + \beta(r) \Delta_\lambda Z + \tE[ \tilde{g}(r) \frac{1}{\lam}(\tY^{\tilde{\eta}^{\lam \txi}} - \tY^{\teta})] + \Delta_\lambda f \right] dr \\
&\quad - \int_s^T \Delta_\lambda Z dB(r),
\end{split}
\ee
where
\begin{align*}
&\alpha(r):= \int_0^1 \partial_y {f} (B^{\gamma}_r , Y^{\gam,\eta}+ \theta(Y^{\gam,\eta^{\lambda \xi}}-Y^{\gam,\eta}), Z^{\gam,\eta^{\lambda \xi}}, \ML_{\Theta^{\eta^{\lambda \xi}}_r} ) d\theta, \\
&\beta(r):=  \int_0^1 \partial_z {f} (B^{\gamma}_r , Y^{\gam,\eta} , Z^{\gam,\eta}+ \theta(Z^{\gam,\eta^{\lambda \xi}}-Z^{\gam,\eta}), \ML_{\Theta^{\eta^{\lambda \xi}}_r} ) d\theta, \\
&\tilde{g}(r):= \int_0^1 \partial_{\nu} f ( \Theta^{\gam,\eta}, \ML_{B^{\eta^{\lambda \xi}}}, \ML_{Y^{ \eta}+ \theta(Y^{ \eta^{\lambda \xi}}-Y^{ \eta})} ,   \tY^{ \teta}+ \theta(\tY^{ \teta^{\lambda \txi}}-\tY^{ \teta}) ) d \theta, \quad\quad \text{and} \\
&\Delta_\lam f(r): = \frac{1}{\lam} [f( \Theta^{\gam,\eta}, \ML_{B^{\eta^{\lambda \xi}}_r} , \ML_{Y^{ \eta}}  ) - f( \Theta^{\gam,\eta}, \ML_{B^{\eta}_r} , \ML_{Y^{ \eta}}  ) ] .
\end{align*}
According to estimate \eqref{eta2} in Lemma \ref{unibd}, we have
\be\label{bd-yeta}
\| \Delta_\lam \tY^{\teta}\|_{\bS^2}:= \|\frac{1}{\lam}(\tY^{\tilde{\eta}^{\lam \txi}} - \tY^{\teta}) \|_{\bS^2} \le C \frac{1}{\lam} \|\eta_t^{\lam \xi} - \eta_t\|_{\bS^2} \le C \| \xi \|_{L^2}.
\ee
Then,  in view of Assumption {\bf(H1)}, we have
\be
\|\int_t^T \tE[ \tilde{g}(r) \Delta_\lam \tY^{\teta}] dr\|_{L^2} + \| \int_t^T \Delta_\lambda f dr \|_{L^2} \le C\|\xi\|_{L^2}.
\ee
Thus BSDE \eqref{e-diff} has a unique solution $(\Delta_\lam Y, \Delta_\lam Z)$, and then $(\Delta_\lambda Y - \MY^{\gl, \xi}, \Delta_\lambda Z -\MZ^{\gl, \xi})$ is the unique  solution of the following BSDE
\begin{align*}
Y(s) =& \  ( \Delta_\lambda \Phi - \tE [ \partial_{\mu_t} \Phi (B^{\gam_t}, \ML_{B^{\eta_t}}, \tB^{\teta_t}) \txi ])  +  \int_s^T  \partial_y f(\Theta^\gl_r, \ML_{\Theta_r^{\eta_t}}) Y dr \\ \nonumber
&\ + \int_s^T  \partial_z f(\Theta^\gl_r, \ML_{\Theta_r^{\eta_t}}) Z  (r) dr + \int_s^T ( \Delta_\lam f - \tE [ \partial_{\mu_t} f(\Theta^\gl_r, \ML_{\Theta_r^{\eta_t}}, \tB^{\teta_t} ) \txi ] ) dr \\ \nonumber
&\ +  \int_s^T   \tE [ \partial_{\nu} f(\Theta^\gl_r, \ML_{\Theta_r^{\eta_t}}, \tY^{\teta_t} ) (\Delta_\lam \tY^{\teta} - \partial_{\ome_t} \tY^{\teta_t, \ML_{\eta_t}} \txi  - \tilde{\MY}^{\teta_t, \txi})  ] dr\\
&\ +  \int_t^T R_1(r) dr - \int_s^T  Z d B(r)
\end{align*}
with
\begin{align*}
R_1(r) :=& \left(\alpha(r)- \partial_y f(\Theta^\gl_r, \ML_{\Theta_r^{\eta_t}}) \right) \Delta_\lam Y + \left(\beta(r) - \partial_z f(\Theta^\gl_r, \ML_{\Theta_r^{\eta_t}}) \right) \Delta_\lam Z \\
& + \tE\left[( \tilde{g}(r)- \partial_{\nu} f(\Theta^\gl_r, \ML_{\Theta_r^{\eta_t}}, \tY^{\teta_t} )) \Delta_\lam \tY^{\teta} \right].
\end{align*}
Since $\partial_{(y,z)} f$ is bounded, from the standard estimate for solutions of BSDEs, we have
\be\label{bd-dely}
\|\Delta_\lambda Y - \MY^{\gl, \xi} \|^2_{\bS^2} \le C ( \|A_1\|^2_{L^2} + \|A_2\|^2_{L^2} + \|A_3\|^2_{L^2}+ \|A_4\|^2_{L^2})
\ee
with
\begin{align*}
A_1 &:= \Delta_\lam \Phi - \tE [ \partial_{\mu_t} \Phi (B^{\gam_t}, \ML_{B^{\eta_t}}, \tB^{\teta_t}) \txi ] , \quad
A_2 := \int_t^T |R_1(r)| dr,  \\
A_3 &:= \int_t^T  \left| ( \Delta_\lam f - \tE [ \partial_{\mu_t} f(\Theta^\gl_r, \ML_{\Theta_r^{\eta_t}}, \tB^{\teta_t} ) \txi ] )\right| dr, \quad \text{and}\\
A_4 &:= \int_t^T \left|  \tE [ \partial_{\nu} f(\Theta^\gl_r, \ML_{\Theta_r^{\eta_t}}, \tY^{\teta_t} ) (\Delta_\lam \tY^{\teta} - \partial_{\ome_t} \tY^{\teta_t, \ML_{\eta_t}} \txi  - \tilde{\MY}^{\teta_t, \txi})(r) ] \right| dr.
\end{align*}

For $A_1, $ according to the Lipschitz continuity of $\partial_{\mu_t} \Phi,$ we have
\be\label{bd-A1}
\begin{split}
\E|A_1|^2 =&\ \E \Big|  \int_0^1 \tE [\partial_{\mu_t} \Phi(B^\gam, \ML_{B^{\eta}+\theta(B^{\eta^{\lam \xi}} - B^{\eta} )} ,\tB^{\teta}+\theta(\tB^{\teta^{\lam \txi}} - \tB^{\teta} ) )\txi\\
 &\   - \partial_{\mu_t} \Phi (B^{\gam_t}, \ML_{B^{\eta_t}}, \tB^{\teta_t}) \txi]d\theta \Big|^2\\
\le&\  C \left( [\bE\|\bB^{\bareta^{\lam \bxi}}- \bB^{\bareta}\|^2]^{\frac12}\|\xi\|_{L^2} +\tE[\|\tB^{\teta^{\lam \txi}} - \tB^{\teta}\| |\txi|] \right)^2
 \le  C \lam^2   \|\xi \|_{L^2}^4  ,
\end{split}
\ee
for a constant $C$ independent of $\gam$ and $\eta.$ Term  $A_2$ is estimated as follows:
\be\label{A2}
\begin{split}
 | A_2  |^2 \le&\  C   \Big| \int_t^T (\alpha(r)- \partial_y f(\Theta^\gl_r, \ML_{\Theta_r^{\eta_t}})) \Delta_\lam Y dr \Big|^2\\
 & +  C\Big|\int_t^T \left(\beta(r) - \partial_z f(\Theta^\gl_r, \ML_{\Theta_r^{\eta_t}}) \right) \Delta_\lam Z dr \Big|^2\\
  &  + C \Big| \int_t^T \tE [( \tilde{g}(r)- \partial_{\nu} f(\Theta^\gl_r, \ML_{\Theta_r^{\eta_t}}, \tY^{\teta_t} )) \Delta_\lam \tY^{\teta}  ] dr \Big|^2  .
  \end{split}
\ee
For the first two terms on the right hand side of the above inequality, by the Lipschitz continuity of $\partial_{(y,z)} f $ and inequality \eqref{bd-dy}, we obtain
\begin{align*}\nonumber
&\E\, \Big| \int_t^T \left(\alpha(r)- \partial_y f(\Theta^\gl_r, \ML_{\Theta_r^{\eta_t}}) \right) \Delta_\lam Y dr \Big|^2\\
&\quad +  \Big|\int_t^T \left(\beta(r) - \partial_z f(\Theta^\gl_r, \ML_{\Theta_r^{\eta_t}}) \right) \Delta_\lam Z dr \Big|^2
 \le C\lam^2   \|\xi \|_{L^2}^4.
\end{align*}

For the third term, we claim that
\be
\E \Big| \int_t^T  \tE\left[( \tilde{g}(r)- \partial_{\nu} f(\Theta^\gl_r, \ML_{\Theta_r^{\eta_t}}, \tY^{\teta_t} )) \Delta_\lam \tY^{\teta} \right]dr \Big|^2 \le  C \lam^2   \|\xi \|_{L^2}^4  ,
\ee
with $C$ depending only on $\|\eta_t\|_{\bS^2},$
and therefore we have
\be\label{bd-A2}
\E|A_2|^2 \le C \lam^2   \|\xi \|_{L^2}^4,
\ee
in view of \eqref{A2} and above estimates.
Indeed, by the H\"older inequality and estimate \eqref{bd-yeta}, we have
\begin{align*}
&\E\Big| \int_t^T \tE[ (\tilde{g}(r)- \partial_\nu f) \Delta_\lam \tY^{\teta}] dr  \Big|^2 \le  \E[\int_t^T \tE|\tilde{g} - \partial_\nu f|^2 dr] \int_t^T \tE|\Delta_\lam \tY^{\teta}|^2dr\\
& \quad \le  C \|\xi \|_{L^2}^2 \E \int_t^T \tE\Big| \int_0^1 (\partial_{\nu} f ( \Theta^{\gam,\eta}, \ML_{B^{\eta^{\lambda \xi}}}, \ML_{Y^{ \eta}+ \theta(Y^{ \eta^{\lambda \xi}}-Y^{ \eta})} ,   \tY^{ \teta}+ \theta(\tY^{ \teta^{\lambda \txi}}-\tY^{ \teta}) ) \\
 &\quad  \quad  -\partial_{\nu} f(\Theta^\gl_r, \ML_{\Theta_r^{\eta_t}}, \tY^{\teta_t} ) )d \theta \Big|^2 dr\\
&\quad \le C \|\xi \|_{L^2}^2 (\|\tY^{ \teta^{\lambda \txi}}-\tY^{ \teta}\|_{\bS^2} + \|B^{\eta^{\lambda \xi}} - B^{\eta}\|_{\bS^2})^2 \le  C \lam^2 \|\xi \|_{L^2}^4  .
\end{align*}

For $A_3, $ from Lipschitz continuity of $\partial_{\mu_t}f$ in $(\mu,\nu,\tome)$, we have
\be\label{bd-A3}
\begin{split}
\E|A_3|^2 =&\ \E\big|\int_t^T \int_0^1 \tE [ \partial_{\mu_t}f(\Theta^{\gl}, \ML_{B^{\eta+\theta(\eta^{\lam \xi}-\eta)}} , \ML_{Y^\eta},\tB^{\teta+\theta(\teta^{\lam \txi}-\teta)} )\\
& \    - \partial_{\mu_t} f(\Theta^\gl_r, \ML_{\Theta_r^{\eta_t}}, \tB^{\teta_t} ) ]\txi d\theta   dr \big|^2\\
\le& \ C   \left| \tE[(\|\eta^{\lam \xi}_t- \eta_t \|_{\bS^2} + \| \teta^{\lam \txi} - \teta \|) |\txi|]\right|^2\\
\le& \ C \left|\tE[\lam \|\xi\|_{L^2 } \txi + \lam |\txi|^2 ] \right|^2
\le  C \lam^2 \| \xi \|_{L^2}^4.
\end{split}
\ee

We now estimate $A_4$.  Since
\be\label{A4}
\Delta_\lam \tY^{\teta} - \partial_{\ome_t} \tY^{\teta_t, \ML_{\eta_t}} \txi  - \tilde{\MY}^{\teta_t, \txi}=\  A_{41} + A_{42}
\ee
with 
\be 
\begin{split}
 A_{41}&:= \  [\frac{1}{\lam}(\tY^{\teta^{\lam \txi}, \ML_{\teta^{\lam \txi}}} - \tY^{\teta, \ML_{\teta^{\lam \txi}}}) - \partial_{\ome_t} \tY^{\teta_t, \ML_{\eta_t}} \txi ], \quad\quad \text{and} \\
 A_{42}&:=[\frac{1}{\lam}(\tY^{\teta, \ML_{\teta^{\lam \txi}}} - \tY^{\teta, \ML_{\teta}}) - \tilde{\MY}^{\teta_t, \txi} ], 
\end{split}
\ee
then,  from boundedness of $\partial_\nu f,$ we have
\be\label{bd-A4}
\begin{split}
\E|A_4|^2 &=  \E\Big| \int_t^T   \tE \Big[ \partial_{\nu} f(\Theta^\gl_r, \ML_{\Theta_r^{\eta_t}}, \tY^{\teta_t} ) (A_{41}(r)+ A_{42}(r))  \Big] dr \Big|^2\\
&\le  C \Big(   \Big| \int_t^T \tE[  A_{41}] dr \Big|^2 +    \Big| \int_t^T \tE [ A_{42}] dr \Big|^2 \Big).
\end{split}
\ee
From Proposition \ref{s-bdd1}, we have
\be\label{bd-A41}
\begin{split}
  \left| \int_t^T \tE[  A_{41}] dr \right|^2 &\le C \int_t^T \left[ \tE \int_0^1 |( \partial_{\ome_t}\tY^{\teta^{\lam \theta \txi},\ML_{\eta^{\lam \xi}}} - \partial_{\ome_t}\tY^{\teta, \ML_{\eta}} )\txi| d\theta  \right]^2 dr \\
&\le C \|\xi \|_{L^2}^2  \int_t^T   \int_0^1  \tE |\partial_{\ome_t}\tY^{\teta^{\lam \theta \txi},\ML_{\eta^{\lam \xi}}} - \partial_{\ome_t}\tY^{\teta, \ML_{\eta}} |^2 d\theta dr\\
&\le C \|\xi \|_{L^2}^2  \int_t^T   \int_0^1 \tE (\|\teta_t^{\lam \theta \txi}- \teta_t \|)^2 d\theta dr
\le C \lam^2 \|\xi \|_{L^2}^4
\end{split}
\ee
for a constant $C$ only depending on $\|\eta_t\|_{\bS^2}.$
Since
\bea\label{bd-A42}
\left| \int_t^T \tE [ A_{42}] dr \right|^2
\le  \int_t^T \tE|A_{42}|^2 dr
\le C \sup_{\gam_t} \int_t^T \tE   |\Delta_\lambda Y - \MY^{\gl, \xi}  |^2  dr,
\eea
for a constant $C$ independent of $(\gam,\eta),$ we have
\be\label{bdd-A4}
\E|A_4|^2 \le C\left(\lam^2 \|\xi \|_{L^2}^4   + \sup_{\gam_t} \int_t^T \tE   |\Delta_\lambda Y - \MY^{\gl, \xi}  |^2  dr \right).
\ee

\ \ \ \ \ \ Finally, in view of inequalities \eqref{bd-A1}, \eqref{bd-A2}, \eqref{bd-A3}, \eqref{bdd-A4} and \eqref{bd-dely}, we have
 \beaa
\|\Delta_\lambda Y - \MY^{\gl, \xi} \|^2_{\bS^2} \le C (\lam^2 \|\xi \|_{L^2}^4 +   \sup_{\gam_t} \int_t^T     \|\Delta_\lambda Y - \MY^{\gl, \xi}  \|_{\bS^2}^2  dr),
\eeaa
where $C$ only depends on $\|\eta_t\|_{\bS^2}$.
Then,  using Gronwall's inequality, we have
\be
\|\Delta_\lambda Y - \MY^{\gl, \xi} \|^2_{\bS^2} \le C \lam^2 \|\xi \|_{L^2}^4 \rightarrow 0, \ \ \text{ as } \lam \rightarrow 0.
\ee

\end{proof}

%
%
%
%

\ \ \ \ \ \ To show the strongly vertical differentiability of $Y^{\gam_t,\eta_t}$ in $\eta_t,$ in view of Definition \ref{svdm}, for any $\tau \le t$ and $\xi \in L^2(\MF_\tau,\R^d),$ consider $\eta_t^{\tau,\lambda \xi}:= \eta_t+ \lambda \xi 1_{[\tau,T]}$. Similar as the vertical differentiable case, we firstly need to show the following limit exits in $\bS^2([t,T]),$
\be
\partial_{\eta_\tau} Y^{\gamma_t, \eta_t,\xi}: = \lim_{\lambda \rightarrow 0} \frac{1}{\lam}(Y^{\gamma_t, \eta_t^{\tau,\lambda \xi}}- Y^{\gam_t, \eta_t}).
\ee
Indeed, $\partial_{\eta_\tau} Y^{\gamma_t, \eta_t, \xi} $ is the unique solution of the following BSDE: for $s\in[t,T],$
\be\label{s-e5'}
\begin{split}
 \partial_{\eta_\tau}Y^{\gl ,\xi}(s)  &= \tE [ \partial_{\mu_\tau} \Phi (B^{\gam_t}, \ML_{B^{\eta_t}}, \tB^{\teta_t}) \txi ] + \int_s^T \tE [ \partial_{\mu_\tau} f(\Theta^\gl_r, \ML_{\Theta_r^{\eta_t}}, \tB^{\teta_t} ) \txi ] dr \\
&\ \ \   + \int_s^T  \partial_y f(\Theta^\gl_r, \ML_{\Theta_r^{\eta_t}}) \partial_{\eta_\tau}Y^{\gl ,\xi} (r) dr\\
&\ \ \  +  \int_s^T   \tE [ \partial_{\nu} f(\Theta^\gl_r, \ML_{\Theta_r^{\eta_t}}, \tY^{\teta_t} ) ( \partial_{\ome_\tau} \tY^{\teta_t, \ML_{\eta_t}} \txi  + \partial_{\eta_\tau}\tilde{Y}^{\teta_t, \txi})(r) ] dr\\
&\ \ \  + \int_s^T  \partial_z f(\Theta^\gl_r, \ML_{\Theta_r^{\eta_t}}) \partial_{\eta_\tau}Z^{\gl ,\xi}  (r) dr\\
&\ \ \   - \int_s^T  \partial_{\eta_\tau}Z^{\gl ,\xi}(r) d B(r),
\end{split}
\ee
where $\partial_{\eta_\tau} {Y}^{\eta_t, \xi} $ solves the following  mean-field BSDE
\be\label{s-e6}
\begin{split}
\partial_{\eta_\tau}Y^{\eta_t, \xi}(s) &= \tE [ \partial_{\mu_\tau} \Phi (B^{\eta_t}, \ML_{B^{\eta_t}}, \tB^{\teta_t}) \txi ] + \int_s^T \tE [ \partial_{\mu_\tau} f(\Theta^{\eta_t}_r, \ML_{\Theta_r^{\eta_t}}, \tB^{\teta_t} ) \txi ] dr \\
&\ \ \  + \int_s^T  \partial_y f(\Theta^{\eta_t}_r, \ML_{\Theta_r^{\eta_t}}) \partial_{\eta_\tau}Y^{{\eta_t} ,\xi} (r) dr\\
&\ \ \  +  \int_s^T   \tE [ \partial_{\nu} f(\Theta^{\eta_t}_r, \ML_{\Theta_r^{\eta_t}}, \tY^{\teta_t} ) ( \partial_{\ome_\tau} \tY^{\teta_t, \ML_{\eta_t}} \txi  + \partial_{\eta_\tau}\tilde{Y}^{\teta_t, \txi})(r) ] dr\\
&\ \ \  + \int_s^T  \partial_z f(\Theta^{\eta_t}_r, \ML_{\Theta_r^{\eta_t}}) \partial_{\eta_\tau}Z^{{\eta_t} ,\xi}  (r) dr - \int_s^T  \partial_{\eta_\tau}Z^{{\eta_t} ,\xi}(r) d B(r).
\end{split}
\ee
According to Assumption \textbf{(H1)}, we see that BSDEs \eqref{s-e6} and \eqref{s-e5'} are well-posed. Moreover, following a
similar argument as in Lemma \ref{gat1}, for the G\^ateaux strong vertical differentiability, we have

\begin{lem}\label{s-gat1}

$\partial_{\eta_\tau}Y^{\gl, \cdot}$ is a bounded linear operator from $L^2({\MF_\tau},\R^d)$ to $\bS^2([t,T])$. Moreover,  $\partial_{\eta_\tau}Y^{\gl, \xi}$ is the G\^ateaux strong vertical derivative of $Y^{\gamma_t, \eta_t}$ at $(\tau,t,\eta) $:
\be
\partial_{\eta_\tau} Y^{\gl, \xi} =   \lim_{\lambda \rightarrow 0} \frac{1}{\lam}(Y^{\gamma_t, \eta_t^{\tau,\lambda \xi}}- Y^{\gam_t, \eta_t}), \quad  \text{strongly in   $\bS^2([t,T])$}.
\ee
In particular, $\partial_{\eta_\tau}Y^{\gl, \cdot}(s)$ is the G\^ateaux derivative of $Y^{\gamma_t, \eta_t}(s)$ at $(\tau,t,\eta) $ in the sense of \eqref{sgat}.

\end{lem}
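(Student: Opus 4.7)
The plan is to follow the template of Lemma \ref{gat1} step by step, tracking the single cut-off time $t$ by the pair $(\tau,t)$ with $\tau\le t$ and replacing every vertical derivative in $\eta_t$ or $\mu_t$ by its strong analogue at $\tau$. First I would verify well-posedness of the two linear mean-field BSDEs \eqref{s-e6} and \eqref{s-e5'}. For \eqref{s-e6}, the only nontrivial input to Lemma \ref{linear} is the $L^2$-integrability of the driving terms involving $\partial_{\mu_\tau}\Phi$, $\partial_{\mu_\tau}f$, and $\tE[\partial_\nu f \,\partial_{\ome_\tau}\tY^{\teta_t,\ML_{\eta_t}}\txi]$. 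Lipschitz continuity (in the $\mu_\tau$-slot) under Assumption \textbf{(H1)} gives linear bounds in $\|\xi\|_{L^2}$ and $\||\teta_t\||$ for the first two, while Proposition \ref{s-bdd1} applied to $\partial_{\ome_\tau}\tY^{\teta_t,\ML_{\eta_t}}$ plays the role that Lemma \ref{bdd1} played in Lemma \ref{e6est}, yielding uniform $\bS^p$-bounds and hence integrability of the $\partial_\nu f$ term exactly as before. Well-posedness of \eqref{s-e5'} is then a standard linear BSDE estimate, and from the linearity of the generator and terminal value in $\xi$ one reads off both linearity of $\xi\mapsto(\partial_{\eta_\tau}Y^{\gl,\xi},\partial_{\eta_\tau}Z^{\gl,\xi})$ and the bound $\|(\partial_{\eta_\tau}Y^{\gl,\xi},\partial_{\eta_\tau}Z^{\gl,\xi})\|_{\bS^2\times\bH^2}\le C\|\xi\|_{L^2}$, where $C$ depends only on $\||\eta_t\||$ and the Lipschitz constants from \textbf{(H1)}.

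For the G\^ateaux convergence, set
\[
\Delta_\lambda Y := \tfrac{1}{\lambda}\bigl(Y^{\gam_t,\eta_t^{\tau,\lambda\xi}}-Y^{\gam_t,\eta_t}\bigr),\qquad \Delta_\lambda Z := \tfrac{1}{\lambda}\bigl(Z^{\gam_t,\eta_t^{\tau,\lambda\xi}}-Z^{\gam_t,\eta_t}\bigr),
\]
and observe that $\|\eta_t^{\tau,\lambda\xi}-\eta_t\|_{\bS^2}\le\lambda\|\xi\|_{L^2}$, so Lemma \ref{unibd} yields $\|(\Delta_\lambda Y,\Delta_\lambda Z)\|_{\bS^2\times\bH^2}\le C\|\xi\|_{L^2}$ and, applied to the independent copy, $\|\Delta_\lambda\tY^{\teta}\|_{\bS^2}\le C\|\xi\|_{L^2}$. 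Writing the BSDE satisfied by $(\Delta_\lambda Y,\Delta_\lambda Z)$ by a convex-combination interpolation in $\theta\in[0,1]$ exactly as in \eqref{e-diff}, I obtain that $(\Delta_\lambda Y-\partial_{\eta_\tau}Y^{\gl,\xi},\Delta_\lambda Z-\partial_{\eta_\tau}Z^{\gl,\xi})$ solves a linear BSDE with bounded coefficients and four residual forcing terms $A_1,\dots,A_4$, respectively coming from the terminal condition, the interpolation errors in $\partial_{(y,z)}f$ and $\partial_\nu f$, the $\partial_{\mu_\tau}f$-term, and the $\partial_\nu f$-term coupled to the independent copy of the difference. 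The standard $\bS^2$-estimate reduces the convergence to controlling $\E|A_i|^2$ for $i=1,\dots,4$.

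For $A_1$ and $A_3$ the bound $\E|A_i|^2\le C\lambda^2\|\xi\|_{L^2}^4$ comes directly from Lipschitz continuity of $\partial_{\mu_\tau}\Phi$ and $\partial_{\mu_\tau}f$ composed with the $O(\lambda)$-perturbations $B^{\eta^{\tau,\lambda\xi}}-B^\eta$ and $\tB^{\teta^{\tau,\lambda\txi}}-\tB^{\teta}$, just as in \eqref{bd-A1} and \eqref{bd-A3}. For $A_2$ the cross-terms $(\alpha-\partial_y f)\Delta_\lambda Y$ and $(\beta-\partial_z f)\Delta_\lambda Z$ inherit an $O(\lambda^2\|\xi\|_{L^2}^4)$ bound from \eqref{eta2} and the $\bS^2$-bound on $\Delta_\lambda Y,\Delta_\lambda Z$, and the $(\tilde g-\partial_\nu f)\Delta_\lambda\tY^{\teta}$ term is handled by Cauchy--Schwarz together with the same interpolation estimate. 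The main obstacle is $A_4$, which involves the inner difference $\Delta_\lambda\tY^{\teta}-\partial_{\ome_\tau}\tY^{\teta_t,\ML_{\eta_t}}\txi-\partial_{\eta_\tau}\tilde Y^{\teta_t,\txi}$; splitting as $A_{41}+A_{42}$ as in \eqref{A4}, the piece $A_{41}$ is $O(\lambda^2\|\xi\|_{L^2}^4)$ by Proposition \ref{s-bdd1} applied at the cut-off time $\tau$ (this is exactly where the SVD regularity in \textbf{(H1)} is used in place of the vertical one), while $A_{42}$ only admits the circular bound
\[
\E|A_{42}|^2 \le C\sup_{\gam_t}\int_t^T\tE\bigl|\Delta_\lambda Y-\partial_{\eta_\tau}Y^{\gl,\xi}\bigr|^2\,dr.
\]
Absorbing this into the left-hand side via Gronwall's inequality (with the supremum in $\gam_t$ first taken on both sides) yields $\|\Delta_\lambda Y-\partial_{\eta_\tau}Y^{\gl,\xi}\|_{\bS^2}^2\le C\lambda^2\|\xi\|_{L^2}^4$, which proves the strong $\bS^2([t,T])$ convergence and, by restriction at $s\in[t,T]$, the pointwise G\^ateaux derivative in the sense of \eqref{sgat}.
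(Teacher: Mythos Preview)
Your proposal is correct and is precisely the approach the paper takes: the paper does not give a separate proof of Lemma~\ref{s-gat1} but simply states that it follows by ``a similar argument as in Lemma~\ref{gat1}'' after noting well-posedness of \eqref{s-e6} and \eqref{s-e5'}. Your sketch fleshes out exactly that similarity, correctly substituting the strong vertical derivatives $\partial_{\ome_\tau},\partial_{\mu_\tau}$ for $\partial_{\ome_t},\partial_{\mu_t}$ and invoking Proposition~\ref{s-bdd1} in place of Lemma~\ref{bdd1} at the one point where SVD regularity is genuinely needed (the $A_{41}$ estimate).
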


\ \ \ \ \ \ To give an explicit representation of the vertical derivative $ Y^{\gam_t, \ML_{\eta_t}}(\cdot) $ with respect to $\ML_{\eta_t}$ in view of \eqref{rep-dmu}, we need to find out a measurable random field $U^{\gam_t, \ML_{\eta_t}}(\cdot): \D \mapsto \bS^2([t,T],\R^d)$, such that for any $  s\ge t$ and $\xi \in L^2(\MF_t,\R^d),$
\be\label{des-dy}
\MY^{\gl, \xi}(s)  = \bE[U^{\gam_t, \ML_{\eta_t}}(\bareta_t)(s)\bxi],
\ee
where $(\bareta, \bxi)$ is an independent copy of $(\eta,\xi).$
If \eqref{des-dy} holds and moreover we show that $Y^{\gam_t,  {\eta_t}}$ is Fr\'echet differentiable with respect to $\eta_t$ in the sense of \eqref{fre} and Remark \ref{d-ext}, we have that
\be
\partial_{\mu_t}Y^{\gam_t, \ML_{\eta_t}}(x_t) := U^{\gam_t, \ML_{\eta_t}}(x_t),\ \  \forall \ x\in \D,
\ee
is the vertical derivative of $Y^{\gam_t, \ML_{\eta_t}}$ at $\ML_{\eta_t}$. Here and in the following,
we write $\partial_\mu$ instead of $\partial_{\ML_{\eta}}$.
In view of \eqref{e5'} and \eqref{des-dy}, we formally deduce that $ (U^{\gam_t, \ML_{\eta_t}}(x_t),V^{\gam_t, \ML_{\eta_t}}(x_t))$ solves the following BSDE: for any $s\in[t,T],$
\be\label{e-u}
\begin{split}
U^{\gl  ,x_t}(s) &= \tE [ \partial_{\mu_t} \Phi (B^{\gam_t}, \ML_{B^{\eta_t}}, \tB^{x_t} ) ]   + \int_s^T \tE [ \partial_{\mu_t} f(\Theta^\gl_r, \ML_{\Theta_r^{\eta_t}}, \tB^{x_t} )   ] dr \\
&\ \ \  + \int_s^T  \partial_y f(\Theta^\gl_r, \ML_{\Theta_r^{\eta_t}}) U^{\gl   ,x_t}(r) dr\\
 &\ \ \  +  \int_s^T   \tE [ \partial_{\nu} f(\Theta^\gl_r, \ML_{\Theta_r^{\eta_t}}, \tY^{x_t, \ML_{\eta_t}} )   \partial_{\ome_t} \tY^{x_t, \ML_{\eta_t}} (r) ]  dr\\
 &\ \ \   +  \int_s^T   \tE [ \partial_{\nu} f (\Theta^\gl_r, \ML_{\Theta_r^{\eta_t}}, \tY^{ \teta_t } )  \tilde{U}^{\teta_t ,x_t} (r) ] dr \\
 &\ \ \  + \int_s^T  \partial_z f(\Theta^\gl_r, \ML_{\Theta_r^{\eta_t}}) V^{\gl ,x_t} (r) dr - \int_s^T  V^{\gl ,x_t}(r) d B(r),
\end{split}
\ee
where $ {U}^{\eta_t ,x_t}$ solves the associated mean-field BSDE:
\be\label{e-ueta}
\begin{split}
U^{\eta_t  ,x_t}(s) &= \tE [  \partial_{\mu_t} \Phi (B^{\eta_t}, \ML_{B^{\eta_t}}, \tB^{x_t} ) ]   + \int_s^T \tE [ \partial_{\mu_t} f(\Theta^\eta_r, \ML_{\Theta_r^{\eta_t}}, \tB^{x_t} )   ] dr \\
&\ \ \ + \int_s^T  \partial_y f(\Theta^\eta_r, \ML_{\Theta_r^{\eta_t}}) U^{\eta_t   ,x_t} (r) dr\\
&\ \ \  +  \int_s^T   \tE [ \partial_{\nu} f(\Theta^{\eta_t}_r, \ML_{\Theta_r^{\eta_t}}, \tY^{x_t, \ML_{\eta_t}} )  \partial_{\ome_t} \tY^{x_t, \ML_{\eta_t}}(r) ] dr \\
 & \ \ \ +  \int_s^T   \tE [ \partial_{\nu} f (\Theta^\gl_r, \ML_{\Theta_r^{\eta_t}}, \tY^{ \teta_t } )  \tilde{U}^{\teta_t ,x_t})(r) ] dr\\
 & \ \ \ + \int_s^T  \partial_z f(\Theta^{\eta_t}_r, \ML_{\Theta_r^{\eta_t}}) V^{\eta_t  ,x_t}  (r) dr  - \int_s^T  V^{\eta_t  ,x_t}(r) d B(r).
\end{split}
\ee
\ \ \ \ \ \ According to Lemma \ref{linear}, we see that mean-field BSDE \eqref{e-ueta} is well posed with $(U^{\eta_t, x_t} , V^{\eta_t, x_t} ) \in \bS^2([t,T],\R^d) \times \bH^2([t,T],\R^{d\times d}).$ Then BSDE \eqref{e-u} also has a unique solution $(U^{\gl, x_t} , V^{\gl, x_t} ) \in \bS^2([t,T],\R^d) \times \bH^2([t,T],\R^{d\times d})$. Moreover, according to the uniqueness of solutions for BSDEs \eqref{e-ueta}, we see 
$
U^{\eta_t, x_t} = U^{\gam_t, {\eta_t},x_t} |_{\gam=\eta} .
$ Concerning the regularity of $U^{\gl, x_t} $ and $U^{\eta_t, x_t}$ with respect to $(\gam, \eta, x)$,  we have

\begin{lem}\label{reg-U}
For any $x,x', \gam, \gam' \in \D,$ and $\eta, \eta' \in \MM_2^D,$ we have 
\begin{align}\label{d-u}
&\|U^{\eta_t, x_t} - U^{\eta'_t, x'_t}   \|_{\bS^2} \le C (  \|\eta_t-\eta'_t \|_{\bS^2}+ \|x_t-x'_t \|  ),  \\ \label{d-ueta}
&\|U^{\gl ,x_t} - U^{\gam'_t,\eta'_t ,x'_t}   \|_{\bS^2} \le C (\|\gam_t-\gam'_t\|+ W_2(\ML_{\eta_t}, \ML_{\eta'_t} )       + \|x_t-x'_t \| ),
\end{align}
with $C$ only depending on $ \|\eta_t\|_{\bS^2}+\|\eta'_t\|_{\bS^2}$.
\end{lem}

\begin{proof}

In the following we omit the subscript $t$ and write $(U,V,Y,Z):=(U(r),V(r),Y(r),Z(r)) $. Moreover, we only show the proof for \eqref{d-u} since \eqref{d-ueta} follows from \eqref{d-u} in a similar way. Set
\beaa
&&(\Delta U, \Delta V ):= (U^{\eta,x  } -U^{\eta', x'} , V^{\eta, x }-V^{\eta',x' }),\\
&&\Delta \partial_{\mu} \Phi := \partial_{\mu_t} \Phi(B^\eta, \ML_{B^\eta}, B^x)-\partial_{\mu_t} \Phi(B^{\eta'}, \ML_{B^{\eta'}}, B^{x'}), \\
&& \Delta \partial_{\mu} f:= \partial_{\mu_t} f(\Theta^\eta_r, \ML_{\Theta_r^{\eta}}, \tB^{x_t}) - \partial_{\mu_t} f(\Theta^{\eta'}_r, \ML_{\Theta_r^{\eta'}}, \tB^{x'_t}),\\
&& \Delta \partial_{\nu} f^{(1)}:= \partial_{\nu} f(\Theta^\eta_r, \ML_{\Theta_r^{\eta}}, \tY^{x_t, \ML_{\eta_t}})  - \partial_{\nu} f(\Theta^{\eta'}_r, \ML_{\Theta_r^{\eta'}}, \tY^{x'_t, \ML_{\eta'_t}} ),\\
&& \Delta \partial_{\nu} f^{(2)}:= \partial_{\nu} f(\Theta^\eta_r, \ML_{\Theta_r^{\eta}}, \tY^{\teta_t}) - \partial_{\nu} f(\Theta^{\eta'}_r, \ML_{\Theta_r^{\eta'}}, \tY^{\teta'_t} ),\\
&&\Delta \partial_{(y,z )}f  := \partial_{(y,z )} f(\Theta^{\eta}_r, \ML_{\Theta_r^{\eta} }) - \partial_{(y,z )} f(\Theta^{\eta'}_r, \ML_{\Theta_r^{\eta'}} ), \\
&&\Delta \partial_{\ome} \tY:= \partial_{\ome_t} \tY^{x_t, \ML_{\eta_t}} - \partial_{\ome_t} \tY^{x'_t, \ML_{\eta'_t}}.
\eeaa
Then,  $(\Delta U , \Delta V )$ is the unique solution of BSDE
\be
\begin{split}
\Delta U (s)  &= \Delta \partial_\mu \Phi + \int_s^T \tE[\Delta \partial_\mu f] dr + \int_s^T \tE[\partial_\nu f(\Theta^{\eta}_r, \ML_{\Theta_r^{\eta} } , \tY^{\teta} ) \Delta \tU ] dr\\
&\ \ \  + \int_s^T (\partial_y f(\Theta^{\eta}_r, \ML_{\Theta_r^{\eta} }) \Delta U + \partial_z f(\Theta^{\eta}_r, \ML_{\Theta_r^{\eta} } ) \Delta V )dr- \int_s^T \Delta V dB(r)\\
&\ \ \   + \int_s^T  \tE[ (\Delta \partial_{\nu}f^{(1)} ) \partial_{\ome} \tY^{ x',\teta'} +  (\Delta \partial_{\nu}f^{(2)} )  \tU^{\teta',x'}  ]  dr\\
&\ \ \   + \int_s^T   \tE[  \partial_{\nu}f(\Theta^{\eta}_r, \ML_{\Theta_r^{\eta} }, \tY^{x, \ML_{\eta}}) \Delta \partial_\ome \tY^{x, \ML_{\eta} } ] dr\\
&\ \ \   +  \int_s^T \left( (\Delta \partial_y f )  U^{\eta', x'}  +    (\Delta \partial_z f)  V^{\eta', x'}  \right) dr.
\end{split}
\ee
By Lipschitz continuity of $(\partial_{(\mu,\nu,y,z)} f, \partial_\mu \Phi ),$ and boudnedness of $\partial_{(y,z)} f,$ we see that
\bea \nonumber
&&\| \Delta \partial_\mu \Phi \|^2_{L^2} + \| \int_t^T \tE[\Delta \partial_\mu f] dr  \|^2_{L^2} +   \E[ \int_t^T \tE (|  \Delta \partial_\nu f^{(1)}  |^2+|  \Delta \partial_\nu f^{(2)}  |^2) dr  ] \\ \label{3132}
&&\ \ \ \ \   +  \E [ \int_t^T |  \Delta \partial_y f  |^2 dr] + \E [ \int_t^T   |  \Delta \partial_z f  |^2 dr ]  \\ \nonumber
&&\ \ \ \ \ \ \ \ \ \ \le  C (  \|\eta_t-\eta'_t \|^2_{\bS^2}+ \|x_t-x'_t \|^2  ).
\eea
Moreover, since $\partial_{\ome} \tY^{ x',\teta'} ,  \tU^{\eta',x'}, \tY^{x, \ML_{\eta} }, \tY^{\teta} \in \bS^2$, and $ V^{\eta', x'}  \in \bH^2, $ from the above estimate and the Cauchy-Schwartz inequality, we obtain  
\be \label{bd-dpf}
\begin{split}
&\| \int_s^T  \tE\left[ (\Delta \partial_{\nu}f^{(1)} ) \partial_{\ome} \tY^{ x',\teta'} +  (\Delta \partial_{\nu}f^{(2)} )  \tU^{\teta',x'}  \right]  dr  \|_{L^2}\\
 &\ \ \ \ \   +  \|  \int_s^T \left( (\Delta \partial_y f )  U^{\eta', x'}  +    (\Delta \partial_z f)  V^{\eta', x'}  \right) dr  \|_{L^2} \\
&\ \ \ \ \ \ \ \   \le  C (  \|\eta_t-\eta'_t \|_{\bS^2}+ \|x_t-x'_t \|  ).
\end{split}
\ee
According to estimates given in Lemma \ref{s-bdd1} and boundedness of $\partial_\nu f,$ we check that
\be\label{3131}
\|        \int_s^T  | \tE[  \partial_{\nu}f(\Theta^{\eta}_r, \ML_{\Theta_r^{\eta} }, \tY^{x, \ML_{\eta}}) \Delta \partial_\ome \tY^{x, \ML_{\eta} } ] | dr \|_{L^2}  \le C (  \|\eta_t-\eta'_t \|_{\bS^2}+ \|x_t-x'_t \|  ).
\ee
Then in view of Lemma \ref{linear}, inequalities \eqref{3132}, \eqref{bd-dpf} and \eqref{3131}, we obtain the desired inequality \eqref{d-u}.

\end{proof}

\begin{rem}

Similar to Lemma \ref{unibd}, according to estimate \eqref{d-ueta},
$
U^{\gam_t, \ML_{\eta_t},x_t}:= U^{\gl, x_t }
$
is well-defined.

\end{rem}

\ \ \ \ \ \ Concerning the SVD $\partial_{\mu_\tau} Y^{\gl,\cdot}$ of $Y^{\gam_t,\ML_{\eta_t}} $ at $(\tau,t,\ML_{\eta})$, $\tau \le t,$ in view of Definition \ref{svdm} and BSDE \eqref{s-e5'}, we deduce that it is the unique solution of the following BSDE: for any $x\in \D,$ $s\in[t,T],$
\be\label{s-e-u}
\begin{split}
 \partial_{\mu_\tau} Y^{\gl  ,x_t}(s) &= \tE [ \partial_{\mu_\tau} \Phi (B^{\gam_t}, \ML_{B^{\eta_t}}, \tB^{x_t} ) ]   + \int_s^T \tE [ \partial_{\mu_\tau} f(\Theta^\gl_r, \ML_{\Theta_r^{\eta_t}}, \tB^{x_t} )   ] dr \\
&\ \ \ + \int_s^T  \partial_y f(\Theta^\gl_r, \ML_{\Theta_r^{\eta_t}}) \partial_{\mu_\tau} Y^{\gl   ,x_t}(r) dr\\
&\ \ \  +  \int_s^T   \tE [ \partial_{\nu} f (\Theta^\gl_r, \ML_{\Theta_r^{\eta_t}}, \tY^{ \teta_t } )  \partial_{\mu_\tau} \tilde{Y}^{\teta_t ,x_t} (r) ] dr\\
 &\ \ \ +  \int_s^T   \tE [ \partial_{\nu} f(\Theta^\gl_r, \ML_{\Theta_r^{\eta_t}}, \tY^{x_t, \ML_{\eta_t}} )   \partial_{\ome_\tau} \tY^{x_t, \ML_{\eta_t}} (r) ]  dr\\
 & \ \ \  + \int_s^T  \partial_z f(\Theta^\gl_r, \ML_{\Theta_r^{\eta_t}}) \partial_{\mu_\tau} Z^{\gl ,x_t} (r) dr\\
& \ \ \   - \int_s^T  \partial_{\mu_\tau} Z^{\gl ,x_t}(r) d B(r), 
\end{split}
\ee
where $\partial_{\mu_\tau} Y^{\eta_t ,x_t}$ sloves the mean-field BSDE below
\be\label{s-e-ueta}
\begin{split}
\partial_{\mu_\tau} Y^{\eta_t  ,x_t}(s)& = \tE [  \partial_{\mu_\tau} \Phi (B^{\eta_t}, \ML_{B^{\eta_t}}, \tB^{x_t} ) ]   + \int_s^T \tE [ \partial_{\mu_\tau} f(\Theta^\eta_r, \ML_{\Theta_r^{\eta_t}}, \tB^{x_t} )   ] dr \\
 &\ \ \ + \int_s^T  \partial_y f(\Theta^\eta_r, \ML_{\Theta_r^{\eta_t}}) \partial_{\mu_\tau} Y^{\eta_t   ,x_t} (r) dr\\
 &\ \ \  +  \int_s^T   \tE [ \partial_{\nu} f(\Theta^{\eta_t}_r, \ML_{\Theta_r^{\eta_t}}, \tY^{x_t, \ML_{\eta_t}} )  \partial_{\ome_\tau} \tY^{x_t, \ML_{\eta_t}}(r) ] dr\\
 &\ \ \  +  \int_s^T   \tE [ \partial_{\nu} f (\Theta^\gl_r, \ML_{\Theta_r^{\eta_t}}, \tY^{ \teta_t } )  \partial_{\mu_\tau} \tilde{Y}^{\teta_t ,x_t})(r) ] dr\\
&\ \ \ + \int_s^T  \partial_z f(\Theta^{\eta_t}_r, \ML_{\Theta_r^{\eta_t}}) \partial_{\mu_\tau} Z^{\eta_t  ,x_t}  (r) dr
- \int_s^T  \partial_{\mu_\tau} Z^{\eta_t  ,x_t}(r) d B(r).
\end{split}
\ee
Thanks to Lemma \ref{linear} again, mean-field BSDE \eqref{s-e-ueta} has a unique solution $(\partial_{\mu_\tau} Y^{\eta_t ,x_t}, \partial_{\mu_\tau} Z^{\eta_t ,x_t}) \in \bS^2 \times \bH^2.$ Then the well-posedness of equation \eqref{s-e-u} follows similarly. Moreover we have that if $\tau=t,$
\be
\partial_{\mu_t } Y^{\gl ,x_t}= U^{\gl,x_t}, \ \ \ \partial_{\mu_t } Y^{\eta_t ,x_t}=U^{\eta_t ,x_t} ,  
\ee
 and $\partial_{\mu_\tau } Y^{\eta_t ,x_t} = \partial_{\mu_\tau } Y^{\gam_t, \ML_{\eta_t} ,x_t}|_{\gam=\eta}.$ Thus the following lemma follows similarly as Lemma \ref{reg-U}.

\begin{lem}\label{s-reg-U}
For any $x,x', \gam, \gam' \in \D,$ and $\eta, \eta' \in \MM_2^D,$ we have
\begin{align}\label{s-d-u}
&\|\partial_{\mu_\tau } Y^{\eta_t ,x_t} - \partial_{\mu_\tau } Y^{\eta'_t ,x'_t}   \|_{\bS^2} \le C (  \|\eta_t-\eta'_t \|_{\bS^2}+ \|x_t-x'_t \|  ), \\ \label{s-d-ueta}
&\|\partial_{\mu_\tau } Y^{\gl ,x_t} - \partial_{\mu_\tau } Y^{\gam'_t, \eta'_t ,x'_t}   \|_{\bS^2} \le C (\|\gam_t-\gam'_t\|+ W_2(\ML_{\eta_t}, \ML_{\eta'_t} )       + \|x_t-x'_t \| ),
\end{align}

with $C$ only depending on $\|\eta_t\|_{\bS^2}+\|\eta'_t\|_{\bS^2}.$
\end{lem}

\ \ \ \ \ \ Recall that $\MY^{\gl, \xi}$ is the solution of BSDE \eqref{e5'} and $U^{\gam_t,  {\eta_t},x_t }$ solves equation \eqref{e-u}.
The following lemma implies that $U^{\gam_t, \ML_{\eta_t},\cdot} :=  U^{\gam_t,  {\eta_t},\cdot } $ is the derivative of $Y^{\gam_t, \ML_{\eta_t}}$ with respect to $\ML_{\eta_t}.$

\begin{lem}\label{e-rep}
For any $\xi \in L^2(\MF_t,\R^d),$ we have
\be\label{des-dy2}
\MY^{\gl, \xi}(s)  = \bE[U^{\gam_t, \ML_{\eta_t},\bareta_t}(s)\bxi],
\ee
where $(\bareta, \bxi)$ is an independent copy of $(\eta, \xi)$.
\end{lem}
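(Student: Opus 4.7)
The plan is to verify that the candidate process
$$\hat{Y}(s) := \bE[U^{\gam_t,\ML_{\eta_t},\bareta_t}(s)\bxi],\qquad \hat{Z}(s) := \bE[V^{\gam_t,\ML_{\eta_t},\bareta_t}(s)\bxi]$$
solves the linear BSDE \eqref{e5'}, and then to invoke uniqueness to conclude $\hat Y=\MY^{\gl,\xi}$. The continuity estimate \eqref{d-ueta} from Lemma \ref{reg-U}, combined with $\bxi\in L^2(\MF_t,\R^d)$ and the Cauchy--Schwarz inequality on the bar space, gives $\hat Y\in\bS^2$ and $\hat Z\in\bH^2$ at once, so the real work lies in identifying the dynamics.

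I would carry this out in two stages. \textbf{Stage 1.} Specialize to $\gam=\eta$ and prove $\MY^{\eta_t,\xi}(s) = \bE[U^{\eta_t,\bareta_t}(s)\bxi]$. The idea is to integrate \eqref{e-ueta} evaluated at $x_t=\bareta_t$ against $\bxi$ and push $\bE$ through every term. Fubini's theorem, justified by the mutual independence of $(\Om,\MF,P)$, $(\tOmega,\tMF,\tP)$, and the bar probability space, permits the interchange of $\bE$ with the $\tE$'s and with the It\^o integral $\int dB$; the distributional identity $(\bareta,\bxi)\stackrel{d}{=}(\teta,\txi)\stackrel{d}{=}(\eta,\xi)$ then converts data terms such as $\bE[\tE[\partial_{\mu_t}\Phi(B^{\eta_t},\ML_{B^{\eta_t}},\tB^{\bareta_t})]\bxi]$ into $\tE[\partial_{\mu_t}\Phi(B^{\eta_t},\ML_{B^{\eta_t}},\tB^{\teta_t})\txi]$, matching the data of \eqref{e6}. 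The self-referential term $\int_s^T\tE[\partial_\nu f\cdot \tilde U^{\teta_t,\bareta_t}(r)]\,dr$, after $\bE[\cdot\,\bxi]$ and Fubini, becomes the tilde-space analogue of $\bE[U^{\eta_t,\bareta_t}(r)\bxi]$, i.e., the analogue of $\hat Y$ with $(\eta,\xi)$ replaced by $(\teta,\txi)$; viewed as a fixed-point equation, $\hat Y$ then satisfies the same linear mean-field BSDE \eqref{e6} as $\MY^{\eta_t,\xi}$, and uniqueness (Lemma \ref{linear}) yields $\hat Y=\MY^{\eta_t,\xi}$.

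\textbf{Stage 2} handles general $\gl$. Applying the same Fubini/renaming procedure to \eqref{e-u} at $x_t=\bareta_t$, the only nontrivial passage is
$\bE[\tE[\partial_\nu f\cdot \tilde U^{\teta_t,\bareta_t}(r)]\bxi] = \tE[\partial_\nu f\cdot \tilde{\MY}^{\teta_t,\txi}(r)]$,
which follows from Stage 1, while all other terms match \eqref{e5'} verbatim. Consequently $\hat Y$ satisfies the linear BSDE \eqref{e5'} with the same coefficients and data as $\MY^{\gl,\xi}$, and a second uniqueness invocation, this time for a standard linear BSDE with bounded Lipschitz coefficients, completes the argument.

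The principal obstacle I anticipate is making the Fubini exchange rigorous for the stochastic integral $\int_s^T V^{\gam_t,\ML_{\eta_t},\bareta_t}(r)\,dB(r)$ against $\bE[\cdot\,\bxi]$, which requires joint measurability of $V^{\gam_t,\ML_{\eta_t},x_t}$ in $(\omega,x)$ together with an $\bS^2\times\bH^2$-bound uniform on the range of $\bareta_t$. I would address this by first verifying the identity for simple $\bxi$ (finite linear combinations of indicator-weighted deterministic vectors), where the exchange reduces to the linearity in $\xi$ already noted around Lemma \ref{gat1}, and then extending to general $\bxi\in L^2(\MF_t,\R^d)$ by density, using the continuity estimate \eqref{d-ueta} together with the linear bound \eqref{y-linear}.
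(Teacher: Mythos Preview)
Your proposal is correct and follows essentially the same route as the paper: substitute $x_t=\bareta_t$ in the $U$-equations, multiply by $\bxi$, apply $\bE$ together with Fubini and the independence/identical-distribution of the three probability spaces to match the data of \eqref{e6} and \eqref{e5'}, and conclude by uniqueness, first for the $\gam=\eta$ mean-field case and then for general $\gl$. The paper simply takes the Fubini step (including the stochastic integral) for granted, whereas you flag it and outline the density argument; otherwise the two proofs coincide.
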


\begin{proof}

Substitute $ \bareta_t$ for $x_t$ in equation \eqref{e-ueta} and multiply the equation by $\bxi$. Then we take the expectation $
\bE$ on both sides of the relation, and obtain
\be\label{e-urep}
\begin{split}
\bE [U^{\eta_t, \bareta_t} (s)\bxi] &=\bE\left[ \tE [  \partial_{\mu_t} \Phi (B^{\eta_t}, \ML_{B^{\eta_t}}, \tB^{\bareta_t} ) ] \bxi\right]  + \int_s^T \bE\left[\tE [ \partial_{\mu_t} f(\Theta^\eta_r, \ML_{\Theta_r^{\eta_t}}, \tB^{\bareta_t} )   ]  \bxi\right]   dr \\
&\ \ \ + \int_s^T  \partial_y f(\Theta^\eta_r, \ML_{\Theta_r^{\eta_t}}) \bE [U^{\eta_t,  \bareta_t}   (r) \bxi] dr\\
&\ \ \ +  \int_s^T  \bE[  \tE [ \partial_{\nu} f(\Theta^{\eta_t}_r, \ML_{\Theta_r^{\eta_t}}, \tY^{\bareta_t, \ML_{\eta_t}} )  \partial_{\ome_t} \tY^{\bareta_t, \ML_{\eta_t}}(r) ] \bxi] dr \\
 &\ \ \ +  \int_s^T   \tE [ \partial_{\nu} f (\Theta^\gl_r, \ML_{\Theta_r^{\eta_t}}, \tY^{ \teta_t } ) \bE [\tilde{U}^{\teta_t,\bareta_t}(r)\bxi ] ]  dr\\
&\ \ \  + \int_s^T  \partial_z f(\Theta^{\eta_t}_r, \ML_{\Theta_r^{\eta_t}}) \bE [V^{\eta_t ,\bareta_t}  (r) \bxi]dr - \int_s^T \bE[  V^{\eta_t, \bareta_t}(r)\bxi] d B(r).
\end{split}
\ee
Since random vectors $(\bareta, \bxi)$, $(\tB, \tY, \tU)$, $(B, Y, Z, \eta)$ are mutually independent, and $(B^{\gam_t},$ $ Y^{\gam_t, \ML_{\eta_t}},$ $\partial_{x_t} Y^{\gam_t, \ML^{\eta_t}})$ is independent of $\MF_t$, we have
\be\nonumber
\begin{split}
&\bE\left[ \tE [  \partial_{\mu_t} \Phi (B^{\eta_t}, \ML_{B^{\eta_t}}, \tB^{\bareta_t} ) ] \bxi \right]= \tE [  \partial_{\mu_t} \Phi (B^{\eta_t}, \ML_{B^{\eta_t}}, \tB^{\teta_t} ) \txi ],\\
&\bE\left[\tE [ \partial_{\mu_t} f(\Theta^\eta_r, \ML_{\Theta_r^{\eta_t}}, \tB^{\bareta_t} )   ]  \bxi\right] =  \tE [ \partial_{\mu_t} f(\Theta^\eta_r, \ML_{\Theta_r^{\eta_t}}, \tB^{\teta_t} )  \txi ],  \quad \text{and}\\
&\bE\left[  \tE [ \partial_{\nu} f(\Theta^{\eta_t}_r, \ML_{\Theta_r^{\eta_t}}, \tY^{\bareta_t, \ML_{\eta_t}} )  \partial_{x_t} \tY^{\bareta_t, \ML_{\eta_t}}(r) ] \bxi\right] = \tE [ \partial_{\nu} f(\Theta^{\eta_t}_r, \ML_{\Theta_r^{\eta_t}}, \tY^{\teta_t} )  \partial_{x_t} \tY^{\teta_t, \ML_{\eta_t}}(r)  \txi].
\end{split}
\ee
Then identity \eqref{e-urep} is equivalent to
\be
\begin{split}
 \bE [U^{\eta_t ,\bareta_t}(s)\bxi] = &\tE [  \partial_{\mu_t} \Phi (B^{\eta_t}, \ML_{B^{\eta_t}}, \tB^{\teta_t} ) \txi ] + \int_s^T \tE [ \partial_{\mu_t} f(\Theta^\eta_r, \ML_{\Theta_r^{\eta_t}}, \tB^{\teta_t} )  \txi ]   dr \\    \nonumber
&\   + \int_s^T  \partial_y f(\Theta^\eta_r, \ML_{\Theta_r^{\eta_t}}) \bE [U^{\eta_t , \bareta_t} (r) \bxi] dr- \int_s^T \bE[  V^{\eta_t, \bareta_t}(r)\bxi] d B(r)\\
&\ +  \int_s^T  \tE [ \partial_{\nu} f(\Theta^{\eta_t}_r, \ML_{\Theta_r^{\eta_t}}, \tY^{\teta_t} )  \partial_{\ome_t} \tY^{\teta_t, \ML_{\eta_t}}(r)  \txi] dr \\ \nonumber
&\   +  \int_s^T   \tE [ \partial_{\nu} f (\Theta^\gl_r, \ML_{\Theta_r^{\eta_t}}, \tY^{ \teta_t } ) \bE [\tilde{U}^{\teta_t, \bareta_t} (r)\bxi ] ]  dr\\
\nonumber & \ + \int_s^T  \partial_z f(\Theta^{\eta_t}_r, \ML_{\Theta_r^{\eta_t}}) \bE [V^{\eta_t, \bareta_t}  (r) \bxi]dr,
\end{split}
\ee
and therefore $(\bE [U^{\eta_t, \bareta_t}(s)\bxi], \bE [V^{\eta_t, \bareta_t}(s)\bxi])$ satisfies BSDE \eqref{e6}. In view of uniqueness of solutions of BSDE \eqref{e6}, we see  
$
\MY^{\eta_t, \xi} = \bE [U^{\eta_t ,\bareta_t}(s)\bxi].
$
Then identity \eqref{des-dy2} follows in a similar way.

\end{proof}


\begin{thm}\label{dfre}

Suppose that $(\Phi, f)$ satisfies Assumption {\bf(H1)}. For any $(t,\gam, \eta)\in [0,T]\times \D \times  \MM^D_2  $, $Y^\gl$
is Fr\'echet differentiable with respect to $\eta_t$ in the sense of \eqref{fre} and Remark \ref{d-ext}. Moreover, the Fr\'echet derivative $D_{\eta_t}Y^{\gl}$ has the following representation: for any $\xi \in L^2(\MF_t,\R^d),$
\be\label{fre-rep}
D_{\eta_t}Y^{\gl}(s)  ( \xi) = \MY^{\gl, \xi}(s) = \bE[U^{\gam_t, \ML_{\eta_t}, \bareta_t}(s)\bxi],
\ee
where $\MY^{\gl, \xi} $ is the solution of BSDE \eqref{e5'} and $U^{\gam_t, \ML_{\eta_t}, x_t}, $ $x\in \D,$ is the solution of BSDE \eqref{e-u}. In particular, $U^{\gam_t, \ML_{\eta_t}, \cdot}$ is the vertical derivative of $Y^{\gam_t, \ML_{\eta_t}}$ at $\ML_{\eta_t}$ in the sense of \eqref{rep-dmu} and Remark \ref{d-ext}.

\end{thm}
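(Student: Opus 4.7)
The plan is to upgrade the Gâteaux differentiability of Lemma \ref{gat1} to Fréchet differentiability, using the representation from Lemma \ref{e-rep} together with the $\bS^2$-Lipschitz estimates of Lemma \ref{reg-U}. Once the Fréchet property is secured, the representation \eqref{fre-rep} is inherited from Lemma \ref{e-rep}, and the identification $\partial_{\mu_t} Y^{\gam_t, \ML_{\eta_t}}(x_t) := U^{\gam_t, \ML_{\eta_t}, x_t}$ is then the defining relation \eqref{rep-dmu} in the Banach-valued sense of Remark \ref{d-ext}.

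Fix $\xi \in L^2(\MF_t, \R^d)$ and set $\eta^\lambda := \eta + \lambda \xi 1_{[t,T]}$ for $\lambda \in [0,1]$. Viewing $F(\lambda) := Y^{\gam_t, \eta^\lambda_t}$ as an $\bS^2$-valued function of the real parameter $\lambda$, Lemma \ref{gat1} applied at $\eta^\lambda$ gives $F'(\lambda) = \MY^{\gam_t, \eta^\lambda_t, \xi}$, and Lemma \ref{e-rep} yields the representation
\be
F'(\lambda)(s) = \bE\bigl[U^{\gam_t, \ML_{\eta^\lambda_t}, \bareta^\lambda_t}(s)\,\bxi\bigr],
\ee
where $(\bareta, \bxi)$ is an independent copy of $(\eta, \xi)$ and $\bareta^\lambda := \bareta + \lambda \bxi 1_{[t,T]}$. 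Combining Jensen's inequality with Lemma \ref{reg-U}, and using $W_2(\ML_{\eta^\lambda_t}, \ML_{\eta_t}) \le \lambda \|\xi\|_{L^2}$ together with $\|\bareta^\lambda_t - \bareta_t\|_{[0,T]} = \lambda |\bxi|$, one gets the key continuity estimate
\be
\bigl\|F'(\lambda) - F'(0)\bigr\|_{\bS^2} \le C\lambda\,\bE\bigl[(\|\xi\|_{L^2} + |\bxi|)\,|\bxi|\bigr] \le C'\lambda\,\|\xi\|_{L^2}^2.
\ee

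Continuity of $F'$ in $\bS^2$ together with the Banach-valued fundamental theorem of calculus then gives
\be
Y^{\gam_t, \eta + \xi 1_{[t,T]}} - Y^{\gam_t, \eta} - \MY^{\gl, \xi} = \int_0^1 \bigl(F'(\lambda) - F'(0)\bigr)\,d\lambda,
\ee
whose $\bS^2$-norm is at most $\tfrac{C'}{2}\|\xi\|_{L^2}^2 = o(\|\xi\|_{L^2})$, which is exactly the Fréchet differentiability in the sense of \eqref{fre} and Remark \ref{d-ext}. Combined with Lemma \ref{e-rep} and Remark \ref{law-dep} this delivers both $D_{\eta_t}Y^{\gl}(\xi)=\MY^{\gl,\xi}=\bE[U^{\gam_t,\ML_{\eta_t},\bareta_t}\bxi]$ and the identification of $U^{\gam_t, \ML_{\eta_t}, \cdot}$ as $\partial_{\mu_t} Y^{\gam_t, \ML_{\eta_t}}$. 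The main obstacle, once Lemmas \ref{gat1}, \ref{e-rep} and \ref{reg-U} are in hand, is the careful juggling of the two independent probability spaces $(\Omega, \MF, P)$ and the auxiliary space hosting $(\bareta, \bxi)$ when passing $\bS^2$-norms through the outer expectation $\bE$ via Jensen's inequality; no genuinely new analytic difficulty is involved.
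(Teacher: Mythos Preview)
Your proof is correct and follows essentially the same route as the paper. Both arguments rely on the triple of Lemma \ref{gat1} (G\^ateaux derivative), Lemma \ref{e-rep} (integral representation), and Lemma \ref{reg-U} (Lipschitz continuity of $U$), and both upgrade G\^ateaux to Fr\'echet via continuity of the G\^ateaux derivative in $\eta$. The only cosmetic difference is that the paper invokes the abstract principle ``G\^ateaux derivative continuous in the base point $\Rightarrow$ Fr\'echet'' through Remark \ref{fregat}, whereas you unpack that principle explicitly via the fundamental theorem of calculus along the segment $\lambda\mapsto\eta^\lambda$; the resulting bound $\|\text{remainder}\|_{\bS^2}\le \tfrac{C'}{2}\|\xi\|_{L^2}^2$ is the same one the abstract argument would produce.
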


\begin{proof}

According to inequality \eqref{y-linear} and argument therein, we see that $\MY^{\gl, \cdot} $ is a bounded linear operator from $L^2(\MF_t,\R^d) $ to $\bS^2([t,T]).$
Moreover, in view of Lemma \ref{gat1}, for any $\xi \in L^2(\MF_t,\R^d),$ $\MY^{\gl, \xi}$ is the  G\^ateaux derivative of $Y^{\gl}$ with respect to $\eta_t.$ To show $\MY^{\gl, \cdot}$ is the Fr\'echet derivative of $Y^{\gl}$, it suffices to prove that $\MY^{\gl, \cdot}$ is continuous in $\eta_t \in \bS^2([0,t])$ as a linear bounded operator from $  L^2(\MF_t)$ to $\bS^2([t,T])$.  Indeed, due to the representation \eqref{des-dy2} and estimate \eqref{d-ueta}, we have that for any $\eta, \eta' \in \MM_2^D,$
$$
\| \MY^{\gl, \xi}- \MY^{\gam_t, \eta'_t, \xi}  \|_{\bS^2}^2 = \E\|  \bE[U^{\gam_t, \ML_{\eta_t},\bareta_t}(s)\bxi]- \bE[U^{\gam_t, \ML_{\eta'_t},\bareta'_t}(s)\bxi]   \|^2
\le  C \| \bxi \|^2_{L^2}  \| \bareta_t - \bareta'_t \|^2_{\bS^2}.
$$
Thus we have the following estimate and complete our proof
$$
\|  \MY^{\gl, \cdot }- \MY^{\gam_t, \eta'_t, \cdot }  \|_{L(L^2(\MF_t), \bS^2)} \le C \| \bareta_t - \bareta'_t \|^2_{\bS^2}.
$$

\end{proof}

\ \ \ \ \ \ For the strong vertical differentiability of $Y^{\gl}$ at $(\tau, t, \ML_{\eta}),$ similar as the proof of Lemma \ref{e-rep}, we have that for any $\xi \in L^2(\MF_\tau,\R^d),$
\be\label{s-rep}
\partial_{\eta_\tau}Y^{\gl, \xi}(s)  = \bE[\partial_{\mu_\tau} Y^{\gam_t, \ML_{\eta_t}, \bareta_t}(s)\bxi].
\ee
Moreover, the following proposition implies that $\partial_{\mu_\tau} Y^{\gam_t, \ML_{\eta_t}, \cdot}$ is  the SVD of $Y^\gl$ at $(\tau,t,\ML_\eta),$ the proof of which follows from Lemma \ref{s-gat1}, Lemma \ref{s-reg-U} and identity \eqref{s-rep}.

\begin{prop}\label{s-dfre}
For any $(t,\gam, \eta)\in [0,T]\times \D \times  \MM^D_2  $ and $\tau\in [0,t]$, $Y^\gl$
is Fr\'echet differentiable with respect to $\eta_\tau$ in the sense of \eqref{sfre} and Remark \ref{ue-sdmu}. Moreover, the Fr\'echet derivative $D_{\eta_\tau}Y^{\gl} $ has the representation: for any $\xi \in L^2(\MF_\tau, \R^d)$
\be\label{s-fre-rep}
D_{\eta_\tau}Y^{\gl}(s) (\xi) = \partial_{\eta_\tau} Y^{\gl, \xi}(s) = \bE[\partial_{\mu_\tau}Y^{\gam_t, \ML_{\eta_t},\bareta_t}(s)\bxi],
\ee
where $\partial_{\eta_\tau} Y^{\gl, \xi}$ is the solution of BSDE \eqref{s-e5'} and $\partial_{\mu_\tau}Y^{\gam_t, \ML_{\eta_t},x_t},$ $x\in \D,$ is the solution of BSDE \eqref{s-e-u}.

\end{prop}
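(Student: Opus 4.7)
The plan is to mirror the proof of Theorem \ref{dfre}, now reading everything through the lens of SVDs at time $\tau\le t$ instead of vertical derivatives at $t$. All the ingredients have already been prepared in Lemma \ref{s-gat1}, Lemma \ref{s-reg-U}, and the BSDEs \eqref{s-e5'}, \eqref{s-e6}, \eqref{s-e-u}, \eqref{s-e-ueta}; the task is to glue them together. Concretely, I would carry out the argument in three steps.

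First, I would establish the integral representation \eqref{s-fre-rep}, which is the SVD analog of Lemma \ref{e-rep}. Given independent copies $(\bareta,\bxi)$ of $(\eta,\xi)$ with $\xi\in L^2(\MF_\tau,\R^d)$, I would substitute $x_t=\bareta_t$ in the BSDE \eqref{s-e-ueta} defining $\partial_{\mu_\tau}Y^{\eta_t,x_t}$, multiply by $\bxi$, and take expectation $\bE$. Using independence of $(\bareta,\bxi)$ from $(B,Y,Z,\eta)$ and from $(\tB,\tY,\tU)$, together with Fubini, each term on the right-hand side rewrites exactly as the corresponding term in BSDE \eqref{s-e6} for $\partial_{\eta_\tau}Y^{\eta_t,\xi}$. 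Uniqueness for the linear mean-field BSDE (Lemma \ref{linear}) then yields $\partial_{\eta_\tau}Y^{\eta_t,\xi}(s)=\bE[\partial_{\mu_\tau}Y^{\eta_t,\bareta_t}(s)\bxi]$. The version with fixed $\gamma_t$ follows by the same substitution in \eqref{s-e-u} versus \eqref{s-e5'}.

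Second, I would use Lemma \ref{s-gat1} to identify $\partial_{\eta_\tau}Y^{\gl,\xi}$ as the Gâteaux strong vertical derivative of $Y^{\gl}$ at $(\tau,t,\eta)$, and observe that $\xi\mapsto\partial_{\eta_\tau}Y^{\gl,\xi}$ is a bounded linear operator from $L^2(\MF_\tau,\R^d)$ to $\bS^2([t,T])$ (the bound follows from Lemma \ref{linear} applied to \eqref{s-e5'}, together with the linear-in-$\xi$ structure of all inhomogeneous terms). To pass from Gâteaux to Fréchet differentiability, I would show that this operator depends continuously on $\eta_t$ in the operator norm $L(L^2(\MF_\tau),\bS^2([t,T]))$. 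This is exactly where the representation \eqref{s-fre-rep} becomes decisive: for $\eta,\eta'\in\MM_2^D$,
\begin{equation*}
\|\partial_{\eta_\tau}Y^{\gl,\xi}-\partial_{\eta_\tau}Y^{\gam_t,\eta'_t,\xi}\|_{\bS^2}^2
=\E\bigl\|\bE[(\partial_{\mu_\tau}Y^{\gam_t,\ML_{\eta_t},\bareta_t}-\partial_{\mu_\tau}Y^{\gam_t,\ML_{\eta'_t},\bareta'_t})\bxi]\bigr\|^2,
\end{equation*}
and by Cauchy--Schwarz together with the Lipschitz estimate \eqref{s-d-ueta} of Lemma \ref{s-reg-U}, this is bounded by $C\|\bxi\|_{L^2}^2(W_2(\ML_{\eta_t},\ML_{\eta'_t})+\|\bareta_t-\bareta'_t\|_{\bS^2})^2\le C\|\bxi\|_{L^2}^2\|\eta_t-\eta'_t\|_{\bS^2}^2$. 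Taking the supremum over $\|\xi\|_{L^2}\le 1$ gives operator-norm continuity, so the Gâteaux derivative is in fact Fréchet. Invoking Remark \ref{fregat} (extended to SVDs as in Remark \ref{ue-sdmu}) finishes the identification.

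The only delicate point is the representation step, because of the coupling through the mean-field term $\tE[\partial_\nu f(\cdots,\tY^{\teta_t})\,\partial_{\mu_\tau}\tY^{\teta_t,x_t}]$ in \eqref{s-e-ueta}: I need to verify that after replacing $x_t$ by $\bareta_t$ and taking $\bE[\cdot\,\bxi]$, this term converts cleanly into $\tE[\partial_\nu f(\cdots,\tY^{\teta_t})\,\partial_{\eta_\tau}\tilde Y^{\teta_t,\txi}]$ appearing in \eqref{s-e6}. This is a genuine but routine Fubini/independence bookkeeping that parallels the one already done in Lemma \ref{e-rep}, so no new analytic difficulty arises; the rest of the proof is a direct transcription of the argument of Theorem \ref{dfre} with $t$ replaced by $\tau$ throughout.
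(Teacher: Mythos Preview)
Your proposal is correct and follows essentially the same approach as the paper: the paper states that the proof follows from Lemma \ref{s-gat1}, Lemma \ref{s-reg-U}, and the representation identity \eqref{s-rep} (whose proof is said to parallel Lemma \ref{e-rep}), and then proceeds exactly as in Theorem \ref{dfre}. Your three-step plan---establish the representation via the substitution/Fubini/uniqueness argument of Lemma \ref{e-rep}, invoke Lemma \ref{s-gat1} for the G\^ateaux derivative, and use the Lipschitz estimate \eqref{s-d-ueta} from Lemma \ref{s-reg-U} to upgrade to Fr\'echet via operator-norm continuity---matches this precisely.
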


\subsection{Second-order differentiability}

\ \ \ \ \ In this section, results are written in $d=1$ case for simplicity of notations.
For the second order differentiability of $Y^\gl$, we assume that $(\Phi,f)$ satisfies assumption {\bf(H2)}.

\ \ \ \  \  According to Proposition \ref{s-bdd1}, $Y^\gl$ is strongly vertically differentiable at $( t,\gam),$ and the derivative $\partial_{\ome_\tau}Y^{\gl}$ at $(\tau,t,\gam)$ solves the linear BSDE \eqref{s-e4}. Similarly, in view of {\bf(H2)}, we see that $\partial_{\ome_\tau}Y^{\gl}$ is strongly vertically differentiable at $(\tau,t,\gam),$ and moreover, the derivative $\partial_{\ome_\tau}^2 Y^{\gl}$ is the unique solution of BSDE in the form of \eqref{e7}.
To apply Theorem \ref{itoformula} on $Y^{\gam_t,\eta_t}$, it remains to study
the differentiability of $\partial_{\mu_t} Y^{\gl}(x_t)=U^{\gl,x_t}$ with respect to $x_t \in \D $. Since $U^{\gl,x_t}$ is the unique solution of BSDE \eqref{e-u}, by formally taking vertical derivative at $(t,x), $ we obtain the following linear BSDE: for $s\in[t,T],$
\be\label{e-durx}
\begin{split}
&\partial_{\tome_t}U^{\gl ,x_t}(s)\\
&\ \  = \tE [\partial_{\tome_t} \partial_{\mu_t} \Phi (B^{\gam_t}, \ML_{B^{\eta_t}}, \tB^{x_t} ) ]
 + \int_s^T \tE [ \partial_{\tome_t} \partial_{\mu_t} f(\Theta^\gl_r, \ML_{\Theta_r^{\eta_t}}, \tB^{x_t} )   ] dr \\
&\ \ \ \ \ \ + \int_s^T  \partial_y f(\Theta^\gl_r, \ML_{\Theta_r^{\eta_t}}) \partial_{\tome_t}U^{\gl  ,x_t} (r) dr\\
&\ \ \ \ \ \ +  \int_s^T   \tE [\partial_{\ty} \partial_{\nu} f(\Theta^\gl_r, \ML_{\Theta_r^{\eta_t}}, \tY^{x_t, \ML_{\eta_t}} ) \partial_{\ome_t} \tY^{x_t, \ML_{\eta_t}}  (\partial_{\ome_t} \tY^{x_t, \ML_{\eta_t}})^T(r) ]  dr\\
&\ \ \ \ \ \ +  \int_s^T   \tE [ \partial_{\nu} f(\Theta^\gl_r, \ML_{\Theta_r^{\eta_t}}, \tY^{x_t, \ML_{\eta_t}} ) \partial^2_{\ome_t} \tY^{x_t, \ML_{\eta_t}} (r) ]  dr \\
&\ \ \ \ \ \  +  \int_s^T   \tE [ \partial_{\nu} f (\Theta^\gl_r, \ML_{\Theta_r^{\eta_t}}, \tY^{ \teta_t } ) \partial_{\tome_t}  \tilde{U}^{\teta_t ,x_t} (r) ] dr\\
& \ \ \ \ \ \  + \int_s^T  \partial_z f(\Theta^\gl_r, \ML_{\Theta_r^{\eta_t}}) \partial_{\tome_t}  V^{\gl  ,x_t}  (r) dr - \int_s^T \partial_{\tome_t}  V^{\gl  ,x_t}(r)\ d B(r),
\end{split}
\ee
where $\partial_{\tome_t}   {U}^{\eta_t ,x_t}$ solves a mean-field linear BSDE
\be\label{e-dux}
\begin{split}
&\partial_{\tome_t}U^{\eta_t ,x_t}(s)\\
 &\ \ = \tE [\partial_{\tome_t} \partial_{\mu_t} \Phi (B^{\eta_t}, \ML_{B^{\eta_t}}, \tB^{x_t} ) ]
 + \int_s^T \tE [\partial_{\tome_t} \partial_{\mu_t} f(\Theta^{\eta_t}_r, \ML_{\Theta_r^{\eta_t}}, \tB^{x_t} )   ] dr \\
&\ \ \ \ \ \ + \int_s^T  \partial_y f(\Theta^{\eta_t}_r, \ML_{\Theta_r^{\eta_t}}) \partial_{\tome_t}U^{{\eta_t} ,x_t} (r) dr  \\
&\ \ \ \ \ \ +  \int_s^T   \tE [\partial_{\ty} \partial_{\nu} f(\Theta^{\eta_t}_r, \ML_{\Theta_r^{\eta_t}}, \tY^{x_t, \ML_{\eta_t}} ) \partial_{\ome_t} \tY^{x_t, \ML_{\eta_t}}  (\partial_{\ome_t} \tY^{x_t, \ML_{\eta_t}})^T (r) ]  dr\\
&\ \ \ \ \ \ +  \int_s^T   \tE [ \partial_{\nu} f(\Theta^{\eta_t}_r, \ML_{\Theta_r^{\eta_t}}, \tY^{x_t, \ML_{\eta_t}} ) \partial^2_{\ome_t} \tY^{x_t, \ML_{\eta_t}} (r) ]  dr \\
&\ \ \ \ \ \  +  \int_s^T   \tE [ \partial_{\nu} f (\Theta^{\eta_t}_r, \ML_{\Theta_r^{\eta_t}}, \tY^{ \teta_t } ) \partial_{\tome_t}  \tilde{U}^{\teta_t ,x_t} (r) ] dr\\
&\ \ \ \ \ \ + \int_s^T  \partial_z f(\Theta^{\eta_t}_r, \ML_{\Theta_r^{\eta_t}}) \partial_{\tome_t}  V^{{\eta_t}  ,x_t}  (r) dr
- \int_s^T \partial_{\tome_t}  V^{{\eta_t}  ,x_t}(r)\ d B(r).
\end{split}
\ee

\begin{lem}\label{dxu}

There exist unique solutions $(\partial_{\tome_t}U^{\eta_t ,x_t}, \partial_{\tome_t}V^{\eta_t ,x_t}) \in \bS^2([t,T]) \times \bH^2([t,T])$ and $(\partial_{\tome_t}U^{\gl ,x_t}, \partial_{\tome_t}V^{\gl ,x_t}) \in \bS^p([t,T]) \times \bH^p([t,T])$ of equations \eqref{e-dux} and \eqref{e-durx}, respectively. Moreover,  $\partial_{\tome_t}U^{\gl ,x_t} $ is the vertical derivative of $U^{\gl ,x_t}$ at $(t,x)$, and for any $K>0$ and $ (\gam,\eta,x),\ (\gam',\eta',x')\in \D \times \MM_2^D \times \D \text{ such that } |||\ML_{\eta_t}|||, |||\ML_{\eta'_t}||| \le K,$
\be
\begin{split}
&\|(\partial_{\tome_t}U^{\gl ,x_t}   ,\partial_{\tome_t}V^{\gl ,x_t} )  \|_{\bS^p \times \bH^p} \le C_p ,\\
&\|(\partial_{\tome_t}U^{\gl ,x_t}-  \partial_{\tome_t}U^{ \gam'_t,\eta'_t ,x'_t} ,\partial_{\tome_t}V^{\gl ,x_t} -  \partial_{\tome_t}V^{ \gam'_t,\eta'_t ,x'_t}  )  \|_{\bS^p \times \bH^p} \\
&\ \ \ \le C_{K,p} ( \|x_t-x'_t\| + W_2(\ML_{\eta_t}, \ML_{\eta'_t}) + \|\gam_t-\gam'_t\| ),
\end{split}
\ee
with some constants $C_p$ and $C_{K,p}.$

\end{lem}

\begin{proof}

To show
the well-posedness of \eqref{e-dux}, according to Lemma \ref{linear}, it remains to check the following terms belong to $L^2(\MF_T)$,
\beaa
&& \Big| \tE [\partial_{\tome_t} \partial_{\mu_t} \Phi (B^{\eta_t}, \ML_{B^{\eta_t}}, \tB^{x_t} ) ]\Big|, \ \int_t^T \Big|\tE [\partial_{\tome_t} \partial_{\mu_t} f(\Theta^{\eta_t}_r, \ML_{\Theta_r^{\eta_t}}, \tB^{x_t} )   ]\Big| dr   ,\\
&&\int_t^T  \Big| \tE [ \partial_{\nu} f(\Theta^{\eta_t}_r, \ML_{\Theta_r^{\eta_t}}, \tY^{x_t, \ML_{\eta_t}} ) \partial^2_{\ome_t} \tY^{x_t, \ML_{\eta_t}} (r) ]\Big|  dr,\quad \text{and}\\
&&\int_t^T  \Big| \tE [\partial_{\ty} \partial_{\nu} f(\Theta^{\eta_t}_r, \ML_{\Theta_r^{\eta_t}}, \tY^{x_t, \ML_{\eta_t}} ) \partial_{\ome_t} \tY^{x_t, \ML_{\eta_t}}  (\partial_{\ome_t} \tY^{x_t, \ML_{\eta_t}} (r))^T ] \Big|,
 \\
\eeaa
which follows easily by the boundedness of $(\partial_{\tome_t} \partial_{\mu_t} \Phi , \partial_{\tome_t}\partial_{\mu_t} f, \partial_{\nu} f, \partial_{\ty} \partial_{\nu} f)$ and Proposition \ref{s-bdd1}.
Moreover, we have  
\be\nonumber 
\begin{split}
&\|\partial_{\tome_t}U^{\eta_t ,x_t}   \|_{\bS^2} + \|\partial_{\tome_t}V^{\eta_t ,x_t}   \|_{\bH^2} \le C_p , \\
&\|\partial_{\tome_t}U^{\eta_t ,x_t}-  \partial_{\tome_t}U^{\eta'_t ,x'_t} \|_{\bS^2} + \|\partial_{\tome_t}V^{\eta_t ,x_t} -  \partial_{\tome_t}V^{\eta'_t ,x'_t}    \|_{\bH^2}  \le C_{K,p} ( \|x_t-x'_t\| + \|\eta_t-\eta'_t\|_{\bS^2}).	
\end{split}
\ee
Concerning the well-posedness of \eqref{e-durx}, since $(\partial_{\tome_t}U^{\eta_t ,x_t}, \partial_{\tome_t}V^{\eta_t ,x_t}) \in \bS^2 \times \bH^2$, we have
$$
 \int_t^T  \Big| \tE [ \partial_{\nu} f (\Theta^\gl_r, \ML_{\Theta_r^{\eta_t}}, \tY^{ \teta_t } ) \partial_{\tome_t}  \tilde{U}^{\teta_t ,x_t} (r) ] \Big| dr \in L^p(\MF_T),
$$
and therefore, there exists a unique solution $(\partial_{\tome_t}U^{\gl ,x_t}, \partial_{\tome_t}V^{\gl ,x_t}) \in \bS^p \times \bH^p.$ In view of the boundedness of  $(\partial_{\tome_t}\partial_{\mu_t}\Phi, \partial_{\tome_t} \partial_{\mu_t}f, \partial_{\ty}\partial_\nu f, \partial_yf , \partial_z f)$ and standard estimate for BSDEs, we have our desired estimates.
\end{proof}

\ \ \ \ \ \ Recall that $\partial_{\mu_\tau} Y^{\gl,x_t}$ is the solution of BSDE \eqref{s-e-u}.
To prove the strong vertical differentiability of $\partial_{\mu_\tau} Y^{\gl,x_t}$ at $(\tau, t, x)$, we consider BSDE
\be\label{e-durx2}
\begin{split}
&\partial_{\tome_\tau}\partial_{\mu_\tau}Y^{\gl ,x_t}(s) \\
&\ \ = \tE [\partial_{\tome_\tau} \partial_{\mu_\tau} \Phi (B^{\gam_t}, \ML_{B^{\eta_t}}, \tB^{x_t} ) ]
 + \int_s^T \tE [ \partial_{\tome_\tau} \partial_{\mu_\tau} f(\Theta^\gl_r, \ML_{\Theta_r^{\eta_t}}, \tB^{x_t} )   ] dr \\
&\ \ \ \ \ \ + \int_s^T  \partial_y f(\Theta^\gl_r, \ML_{\Theta_r^{\eta_t}}) \partial_{\tome_\tau}\partial_{\mu_\tau}Y^{\gl ,x_t} (r) dr\\
&\ \ \ \ \ \ +  \int_s^T   \tE [\partial_{\ty} \partial_{\nu} f(\Theta^\gl_r, \ML_{\Theta_r^{\eta_t}}, \tY^{x_t, \ML_{\eta_t}} ) \partial_{\ome_\tau} \tY^{x_t, \ML_{\eta_t}}  (\partial_{\ome_\tau} \tY^{x_t, \ML_{\eta_t}} (r))^T ]  dr\\
&\ \ \ \ \ \ +  \int_s^T   \tE [ \partial_{\nu} f(\Theta^\gl_r, \ML_{\Theta_r^{\eta_t}}, \tY^{x_t, \ML_{\eta_t}} ) \partial^2_{\ome_\tau} \tY^{x_t, \ML_{\eta_t}} (r) ]  dr \\
&\ \ \ \ \ \  +  \int_s^T   \tE [ \partial_{\nu} f (\Theta^\gl_r, \ML_{\Theta_r^{\eta_t}}, \tY^{ \teta_t } ) \partial_{\tome_\tau}\partial_{\mu_\tau}\tY^{\teta_t ,x_t} (r) ] dr\\
&\ \ \ \ \ \ + \int_s^T  \partial_z f(\Theta^\gl_r, \ML_{\Theta_r^{\eta_t}}) \partial_{\tome_\tau}\partial_{\mu_\tau}Z^{\gl ,x_t}  (r) dr\\
&\ \ \ \ \ \ - \int_s^T \partial_{\tome_\tau}\partial_{\mu_\tau}Z^{\gl ,x_t}(r)\ d B(r),\ s\in [t,T],
\end{split}
\ee
where $\partial_{\tome_\tau}\partial_{\mu_\tau}Y^{\eta_t ,x_t}$ solves the following mean-field linear BSDE
\be\label{e-dux2}
\begin{split}
&\partial_{\tome_\tau}\partial_{\mu_\tau}Y^{\eta_t ,x_t}(s)\\
 &\ \ = \tE [\partial_{\tome_\tau} \partial_{\mu_\tau} \Phi (B^{\eta_t}, \ML_{B^{\eta_t}}, \tB^{x_t} ) ]
 + \int_s^T \tE [\partial_{\tome_\tau} \partial_{\mu_\tau} f(\Theta^{\eta_t}_r, \ML_{\Theta_r^{\eta_t}}, \tB^{x_t} )   ] dr \\
&\ \ \ \ \ \ + \int_s^T  \partial_y f(\Theta^{\eta_t}_r, \ML_{\Theta_r^{\eta_t}}) \partial_{\tome_\tau}\partial_{\mu_\tau}Y^{\eta_t ,x_t} (r) dr\\
&\ \ \ \ \ \ +  \int_s^T   \tE [\partial_{\ty} \partial_{\nu} f(\Theta^{\eta_t}_r, \ML_{\Theta_r^{\eta_t}}, \tY^{x_t, \ML_{\eta_t}} ) \partial_{\tome_\tau} \tY^{x_t, \ML_{\eta_t}}  (\partial_{\tome_\tau} \tY^{x_t, \ML_{\eta_t}} (r))^T ]  dr\\
&\ \ \ \ \ \ +  \int_s^T   \tE [ \partial_{\nu} f(\Theta^{\eta_t}_r, \ML_{\Theta_r^{\eta_t}}, \tY^{x_t, \ML_{\eta_t}} ) \partial^2_{\ome_\tau} \tY^{x_t, \ML_{\eta_t}} (r) ]  dr\\
 &\ \ \ \ \ \  +  \int_s^T   \tE [ \partial_{\nu} f (\Theta^{\eta_t}_r, \ML_{\Theta_r^{\eta_t}}, \tY^{ \teta_t } ) \partial_{\tome_\tau}\partial_{\mu_\tau}\tY^{\teta_t ,x_t} (r) ] dr\\
&\ \ \ \ \ \ + \int_s^T  \partial_z f(\Theta^{\eta_t}_r, \ML_{\Theta_r^{\eta_t}}) \partial_{\tome_\tau}\partial_{\mu_\tau}Z^{\eta_t ,x_t} (r) dr
- \int_s^T  \partial_{\tome_\tau}\partial_{\mu_\tau}Z^{\eta_t ,x_t}(r)\ d B(r).
\end{split}
\ee
Then we have the following lemma via a similar proof of Lemma \ref{dxu}.

\begin{lem}\label{s-dxu}

There exists a unique solution $(\partial_{\tome_\tau}\partial_{\mu_\tau}Y^{\gl ,x_t}, \partial_{\tome_\tau}\partial_{\mu_\tau}Z^{\gl ,x_t})$ of BSDE \eqref{e-durx2}. Moreover, $\partial_{\tome_\tau}\partial_{\mu_\tau}Y^{\gl ,x_t}$ is the SVD of $\partial_{\mu_\tau}Y^{\gl ,x_t} $ at $(\tau,t,x)$, and
for any $K>0$,
\be
\begin{split}
&\|(\partial_{\tome_\tau}\partial_{\mu_\tau}Y^{\gl ,x_t}   ,\partial_{\tome_\tau}\partial_{\mu_\tau}Z^{\gl ,x_t} )  \|_{\bS^p \times \bH^p} \le C_p ,\ \ \ \ \  \\
&\|(\partial_{\tome_\tau}\partial_{\mu_\tau}Y^{\gl ,x_t}-  \partial_{\tome_\tau}\partial_{\mu_\tau}Y^{\gam_t',\eta'_t ,x'_t} , \partial_{\tome_\tau}\partial_{\mu_\tau}Z^{\gl ,x_t} -  \partial_{\tome_\tau}\partial_{\mu_\tau}Z^{\gam_t',\eta'_t ,x'_t}    )\|_{\bS^p \times \bH^p} \\
&\ \ \ \le C_{K,p} ( \|x_t-x'_t\| + W_2(\ML_{\eta_t}, \ML_{\eta'_t}) + \|\gam_t-\gam'_t\| ),\\
&\ \ \ \ \ \ \ \forall \ (\gam,\eta,x),\ (\gam',\eta',x')\in \D \times \MM_2^D \times \D \text{ such that } |||\ML_{\eta_t}|||, |||\ML_{\eta'_t}||| \le K,
\end{split}
\ee
with some constants $C_p$ and $C_{K,p}.$

\end{lem}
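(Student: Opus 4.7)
My plan is to follow the template of Lemma \ref{dxu} in three stages: well-posedness of the mean-field BSDE \eqref{e-dux2}, well-posedness of \eqref{e-durx2} as a classical linear BSDE, and identification of the solution as the SVD of $\partial_{\mu_\tau}Y^{\gl,x_t}$ at $(\tau,t,x)$. For the mean-field equation \eqref{e-dux2}, the only non-routine point is checking that the inhomogeneous terms sit in the right space. Under Assumption \textbf{(H2)}, $\partial_{\tome_\tau}\partial_{\mu_\tau}\Phi$ and $\partial_{\tome_\tau}\partial_{\mu_\tau}f$ are bounded, so the terminal data is in $L^2(\MF_T)$ and the $\partial_{\tome_\tau}\partial_{\mu_\tau}f$-driven integral is in $L^2$. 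The term $\tE[\partial_{\ty}\partial_\nu f\cdot \partial_{\ome_\tau}\tY^{x_t,\ML_{\eta_t}}(\partial_{\ome_\tau}\tY^{x_t,\ML_{\eta_t}})^T]$ lies in $\bH^2$ because $\partial_{\ty}\partial_\nu f$ is bounded and $\partial_{\ome_\tau}\tY^{x_t,\ML_{\eta_t}}\in \bS^p$ by Proposition~\ref{s-bdd1}; similarly $\tE[\partial_\nu f\cdot \partial^2_{\ome_\tau}\tY^{x_t,\ML_{\eta_t}}]$ uses the analogue of Lemma \ref{drr} for SVD together with the at-most-linear growth of $\partial_\nu f$ in $\tY^{\teta_t}$ to apply Lemma \ref{linear} with $g(r,x)=\partial_\nu f(\Theta^{\eta_t}_r,\ML_{\Theta^{\eta_t}_r},x)$ as the coupling.

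Once $(\partial_{\tome_\tau}\partial_{\mu_\tau}Y^{\eta_t,x_t},\partial_{\tome_\tau}\partial_{\mu_\tau}Z^{\eta_t,x_t})\in\bS^2\times\bH^2$ is in hand, \eqref{e-durx2} becomes a classical linear BSDE with bounded coefficients $\partial_y f,\partial_z f$ and with the mean-field term $\tE[\partial_\nu f\cdot \partial_{\tome_\tau}\partial_{\mu_\tau}\tY^{\teta_t,x_t}]$ treated as exogenous data. Standard $\bS^p\times\bH^p$ estimates for linear BSDEs then yield the existence, uniqueness, and the first bound $\|(\partial_{\tome_\tau}\partial_{\mu_\tau}Y^{\gl,x_t},\partial_{\tome_\tau}\partial_{\mu_\tau}Z^{\gl,x_t})\|_{\bS^p\times\bH^p}\le C_p$.

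The step I expect to require the most care is the identification of $\partial_{\tome_\tau}\partial_{\mu_\tau}Y^{\gl,x_t}$ as the genuine SVD of $\partial_{\mu_\tau}Y^{\gl,x_t}=\partial_{\mu_\tau}Y^{\gam_t,\ML_{\eta_t},x_t}$ at $(\tau,t,x)$. Following the first-order argument of Lemma \ref{dxu}, I would form the difference quotient $\Delta_h:=h^{-1}(\partial_{\mu_\tau}Y^{\gl,x^{\tau,h}_t}-\partial_{\mu_\tau}Y^{\gl,x_t})$, write the BSDE it satisfies by expanding each coefficient of \eqref{s-e-u} in $x$ via the fundamental theorem of calculus, and subtract \eqref{e-durx2}. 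The resulting BSDE for $\Delta_h-\partial_{\tome_\tau}\partial_{\mu_\tau}Y^{\gl,x_t}$ has an inhomogeneity built from the Lipschitz defects of $\partial_{\mu_\tau}\Phi,\partial_{\mu_\tau}f,\partial_\nu f$ and from $h^{-1}(\partial_{\ome_\tau}\tY^{x^{\tau,h}_t,\ML_{\eta_t}}-\partial_{\ome_\tau}\tY^{x_t,\ML_{\eta_t}})-\partial^2_{\ome_\tau}\tY^{x_t,\ML_{\eta_t}}$; Lemmas \ref{bdd1}, \ref{drr}, \ref{reg-U}, \ref{s-reg-U} and \ref{dxu} make every such defect $O(h)$ in the appropriate norm. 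The mean-field coupling through $\partial_{\tome_\tau}\partial_{\mu_\tau}\tY^{\teta_t,x_t}$ is handled as in the proof of Lemma \ref{gat1}: an application of Lemma \ref{linear} followed by Gronwall's inequality closes the estimate and gives convergence in $\bS^p\times\bH^p$.

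Finally, the Lipschitz-type estimate with constant $C_{K,p}$ follows by writing the BSDE satisfied by the difference of two solutions with data $(\gl,x)$ and $(\gam'_t,\eta'_t,x'_t)$. All coefficient differences are controlled by $\|\gam_t-\gam'_t\|+W_2(\ML_{\eta_t},\ML_{\eta'_t})+\|x_t-x'_t\|$ using Assumption \textbf{(H2)} together with the continuity of $(\partial_{\ome_\tau}Y,\partial^2_{\ome_\tau}Y,U,\partial_{\mu_\tau}Y)$ supplied by Lemmas \ref{bdd1}, \ref{drr}, \ref{reg-U}, \ref{s-reg-U} and \ref{dxu}; one more application of the linear BSDE estimate from Lemma \ref{linear} (mean-field) and of the standard $\bS^p\times\bH^p$ bound (classical) concludes the proof.
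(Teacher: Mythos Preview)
Your proposal is correct and follows exactly the approach the paper indicates: the paper's proof of Lemma \ref{s-dxu} is the single line ``via a similar proof of Lemma \ref{dxu},'' and you have spelled out precisely that template---well-posedness of the mean-field BSDE \eqref{e-dux2} via Lemma \ref{linear}, then of \eqref{e-durx2} as a classical linear BSDE, the difference-quotient identification of the SVD, and the Lipschitz estimate from the linear BSDE bounds. If anything, your write-up is more explicit than the paper's own treatment of either Lemma \ref{dxu} or \ref{s-dxu}, particularly on the SVD identification step, which in the paper is left entirely implicit.
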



\section{Appendix}

\subsection{Proof of Lemma \ref{unibd}}

We omit the proof of inequality \eqref{eta1} since it is similar to that of~\eqref{gam1} for $p=2.$ 
Now suppose that \eqref{eta1} is true, and we show inquality \eqref{eta2} first. In what follows,  we write $(\MY,\MZ):=( Y^{  \eta_t}, Z^{   \eta_t} )$ and notations such as $(\MY', \MZ')$ and $( \Phi, \Phi')$ are defined in a similar way. Set $(\delta \MY, \delta \MZ):= (\MY-\MY', \MZ-\MZ')$. We see that $(\delta \MY, \delta \MZ)$ solves the following linearized BSDE
\be\label{linear-app}
\begin{split}
\delta \MY (s) &=  \Phi-\Phi' + \int_s^T [f(\Theta^{\eta}_r, \ML_{\Theta^{\eta}_r} ) - f(\Theta^{\eta'}_r, \ML_{\Theta^{\eta'}_r} ) ] dr - \int_s^T \delta \MZ(r) d B(r) \\
&= : \delta \Phi + \int_s^T \Big( a_r \delta \MY(r) + b_r \delta \MZ(r) + \tE[ \tilde{c}_r \delta \tMY(r)] + \delta h_r \Big) dr  - \int_s^T \delta \MZ(r) d B(r) ,
\end{split}
\ee
where
\beaa
&&a_r:= \int_0^1 \partial_y {f} (B^{\eta_t}_r , \MY'+ \theta(\MY-\MY'), \MZ, \ML_{\Theta^\eta_r} ) d\theta, \\
&&b_r:=  \int_0^1 \partial_z {f} (B^{\eta_t}_r , \MY' , \MZ'+\theta(\MZ-\MZ'), \ML_{\Theta^{\eta_t}_r} ) d\theta ,\\
&&\tilde{c}_r:= \int_0^1 \partial_{\nu} f ( B^{\eta_t}_r , \MY' , \MZ', \ML_{B_r^{\eta_t}}, \ML_{\MY'+ \theta(\MY -\MY')} ,   \tMY'+ \theta(\tMY -\tMY' ) ) d \theta,\quad \text{and} \quad \\
&&\delta h_r := f( B^{\eta_t}_r ,  \MY', \MZ', \ML_{B^{\eta_t}_r} , \ML_{\tMY'}  ) - f( B^{{\eta'_t}}_r ,  \MY', \MZ', \ML_{B^{\eta'_t}_r} , \ML_{\tMY'}  ) .
\eeaa

Let
$$F(\Theta^\eta_t, y_1 , y_2  ):= \int_0^1 \partial_{\nu} f ( B^{\eta_t}_r , \MY' , \MZ', \ML_{B_r^{\eta_t}}, \ML_{\MY'+ \theta(\MY -\MY')} ,   y_1 + \theta y_2 ) d \theta.$$  Then,  $F$ is uniformly Lipschitz continuous in $(y_1, y_2)$ in view of Assumption {\bf(H0)}. On the other hand, since $\MY', \MZ' \in \bH^2,$ we deduce that $F(\Theta^\eta_t,0,0) \in \bH^2$, and moreover, we have
\bea
\| \MY \|_{\bH^2} + \|\MY' \|_{\bH^2} + \| F(\Theta^\eta_t,0,0) \|_{\bH^2} \le C (1+\|\eta_t\|_{\bS^2} + \|\eta'_t\|_{\bS^2} ).
\eea

Then applying estimates of Lemma \ref{linear} to BSDE \eqref{linear-app}, we have
\be\label{ine1-app}
\|\delta \MY \|^2_{\bS^2} + \| \delta \MZ \|^2_{\bH^2} \le C (\|\Phi-\Phi'  \|^2_{L^2} + \|\delta h \|^2_{\bH^2}) e^{C(\| F(\Theta^\eta_t, 0 , 0)\|_{\bH^2}+ \|(\MY, \MY')\|_{\bH^2} )}.
\ee
Furthermore, using the Lipschitz continuity of $\Phi$ and $f,$ we have
\be
\|\Phi-\Phi'  \|_{L^2} + \|\delta h \|_{\bH^2} \le C \|\eta_t-\eta'_t\|_{\bS^2},
\ee
and thus the desired estimate \eqref{eta2} in view of inequality \eqref{ine1-app}.

\ \ \ \ \ \ Now we show inequalities \eqref{gam1}. In what follows,  we omit the superscript $(\gamma_t, \eta_t)$ for simplicity.
Without loss of generality, we assume $p= 2q, \ q \in \Z^+.$ Otherwise,  we replace $|Y|$ with $(|Y|^2+\vep)^{\frac12 }$ in the following argument and then take the limit $\vep \rightarrow 0$. Applying It\^o's formula to $|Y|^p$ on $ [s,T],$  we have
\be \label{ito1}
\begin{split}
&|Y(s)|^p + \frac12 p (p-1) \int_s^T |Y|^{p-2} |Z|^2 dr\\
& \quad =\ | \Phi(B_T^{\gamma_t}, \ML_{B_T^{\eta_t} }) |^p + p \int_s^T  |Y(r)|^{p-1} f(\Theta^{\gamma_t, \eta_t}_r, \ML_{\Theta^{\eta_t}_r} ) dr -  p \int_s^T |Y(r)|^{p-1} Z(r) dB(r) .
\end{split}
\ee
Since $\Phi$ and $f$ are Lipschitz continuous, we have $$
\Phi(B_T^{\gamma_t}, \ML_{B_T^{\eta_t} })  , \int_t^T f( B_r^{\gamma_t}, 0, 0 ,  \ML_{B_r^{\eta_t}}, \ML_{Y^{\eta_t}(r)}  ) dr \in L^q(\MF_T), \quad \forall q \ge 1.
$$
Then,  using standard estimates of BSDEs, we obtain
$\|Y\|_{\bS^q} + \|Z\|_{\bH^q} < \infty. $
Taking the expectation on both sides of identity \eqref{ito1},  we have
\be\label{ine1}
\E[|Y(s)|^p   + \frac12 p (p-1) \int_s^T |Y|^{p-2} |Z|^2 dr  ] \le \E[|\Phi|^p] + p \int_s^T  |Y(r)|^{p-1} f(\Theta^{\gamma_t, \eta_t}_r, \ML_{\Theta^{\eta_t}_r} ) dr .
\ee

Applying Young's inequality to the last integral, we have
\be \label{ineq}
\begin{split}
 &\int_s^T  |Y(r)|^{p-1} f(\Theta^{\gamma_t, \eta_t}_r, \ML_{\Theta^{\eta_t}_r} ) dr\\
   &\ \ \le  \int_s^T \Big[ |Y|^{p-1} f( B^{\gamma_t}_r, 0,0, \ML_{B_r^{\eta_t}}, \delta_0   ) dr + C |Y|^p
   + C |Y|^{p-1} [\tE[|\tY^{\teta_t}(r) |^2]  ]^{\frac12} + C|Y|^{p-1} |Z| \Big] dr \\
& \ \ \le  (C_p + \frac{C}{\vep} ) \int_s^T |Y|^{p} dr + \frac{1}{p} \int_s^T | f(B^{\gamma_t}_r, 0,0, \ML_{B_r^{\eta_t}}, \delta_0 )|^p dr\\
&\ \ \ \ \ \  + \vep C \int_s^T |Y|^{p-2} |Z|^2 dr + \frac{C}{2p} \int_s^T  \| \tY^{\teta_t}(r)\|_{L^2}^{p} dr, \quad \forall \vep >0.
\end{split}
\ee
Then by choosing a small enough $\vep$ such that $\frac12 p (p-1)- \vep C>0$, we obtain
\be\label{app-1}
\begin{split}
&\E[|Y(s)|^p + C_p \int_s^T |Y(r)|^{p-2} |Z(r)|^2 dr  ]\\
&\quad \le   \E  [ |\Phi(B_T^{\gamma_t}, \ML_{B_T^{\eta_t} })|^p ] +  C_p\int_s^T [ | f(B^{\gamma_t}_r, 0,0, \ML_{B_r^{\eta_t}}, \delta_0 )|^p dr + \int_s^T |Y|^{p} dr  ]\\
&\quad \quad +   \int_s^T  \| \tY^{\teta_t}(r)\|_{L^2}^{p} dr.
\end{split}
\ee
Apply Gronwall's inequality to \eqref{app-1}, and we obtain
\be\label{ine3}
\begin{split}
& \E [|Y(s)|^p + C_p \int_s^T |Y(r)|^{p-2} |Z(r)|^2 dr ]\\
&\ \ \ \le C_p \E [|\Phi(B_T^{\gamma_t}, \ML_{B_T^{\eta_t} })|^p +  \int_s^T  | f(B^{\gamma_t}_r, 0,0, \ML_{B_r^{\eta_t}}, \delta_0 )|^p dr]  +   \int_s^T  \| \tY^{\teta_t}(r)\|_{L^2}^{p} dr.
\end{split}
\ee

\ \ \ \ \ \  Then in view of inequalities \eqref{ineq} and \eqref{ito1}, choosing $\vep$ sufficiently small, we have
\be\label{ine2}
\begin{split}
|Y(s)|^{p} &\le |\Phi(B_T^{\gamma_t}, \ML_{B_T^{\eta_t} })|^p +   C_p \Big[  \int_s^T | f(B^{\gamma_t}_r, 0,0, \ML_{B_r^{\eta_t}}, \delta_0 )|^p dr\\
&\ \ \ + \int_s^T |Y(r)|^p dr + \int_s^T \|Y^{\eta_t}(r)\|^p_{L_2} dr \Big]  -p \int_s^T |Y|^{p-1} Z(r) dB(r).
\end{split}
\ee
Applying Burkholder-Davis-Gundy inequality to the right hand side of inequality \eqref{ine2}, we obtain
\be\label{ine2-app}
\begin{split} 
\E [\sup_{s\in[t,T] } |Y(s)|^p]
  \le&\ \E\left[ |\Phi(B_T^{\gamma_t}, \ML_{B_T^{\eta_t} })|^p\right]  + C_p \, \E \int_t^T  | f(B^{\gamma_t}_r, 0,0, \ML_{B_r^{\eta_t}}, \delta_0 )|^p dr \\ 
&+C_p\, \E \int_s^T \|Y^{\eta_t}(r)\|^p_{L_2} dr+C_p\, \E\int_t^T  |Y(r)|^p dr \\
&+  C_p\, \E\left[\Big( \int_t^T |Y|^{2p-2} |Z|^2  dr   \Big)^\frac12\right] \\ 
\le& \ \E \left[ |\Phi(B_T^{\gamma_t}, \ML_{B_T^{\eta_t} })|^p\right]  + C_p\, \E  \int_t^T  | f(B^{\gamma_t}_r, 0,0, \ML_{B_r^{\eta_t}}, \delta_0 )|^p dr\\
&+C_p\, \E \int_s^T \|Y^{\eta_t}(r)\|^p_{L_2} dr
  + C_p\, \E  \int_t^T  |Y(r)|^p dr \\
  &+ \vep\, \E\left[\sup_{s\in[t,T]} |Y(s)|^p\right] + \frac{1}{4\vep} \E\left[\int_t^T |Y|^{p-2} |Z|^2 dr \right].
 \end{split}
 \ee

Then in view of \eqref{ine3}, \eqref{eta1} and \eqref{ine2-app}, for sufficiently small  $\vep$, we have
\be\label{iney}
\begin{split}
 &\E\left[\sup_{s\in[t,T]} | Y(s)|^p\right ]\\
  \le &\ \ C_p\, \E \left[  |\Phi(B_T^{\gamma_t}, \ML_{B_T^{\eta_t} })|^p \right] +     C_p\, \E \int_t^T  | f(B^{\gamma_t}_r, 0,0, \ML_{B_r^{\eta_t}}, \delta_0 )|^p dr \\
  &\ \ +C_p \, \E \left[\int_s^T \|Y^{\eta_t}(r)\|^p_{L_2} dr \right] \\
\le&\ \   C_p \left( 1+ \E [  \| B^{\gamma_t}\|^p  ]+ \| \| B^{\eta_t}  \| \|_{L_2}^p +\| \| \eta_t \| \|^p_{L_2} \right)\\
\le&\ \   C_p  \left( 1+ \E\left[ ( \|B\| +\| \gamma_t \|  )^p\right] +  \E\left[ ( \|B\| +\| \eta_t \|  )^2 \right]^\frac{p}{2}+\| \| \eta_t \| \|^p_{L_2} \right)\\
 \le&\ \   C_p  \left( 1+ \| \gamma_t \|^p + \| \| \eta_t\| \|^{p}_{L_2} \right).
\end{split}
\ee
Let $\tilde{f}( Y, Z ):= f(B_r^{\gamma_t}, Y , Z  ,  \ML_{B_r^{\eta_t}}, \ML_{Y^{\eta_t}(r)} )$. Using a standard argument of BSDEs, we have
\begin{equation*}
\begin{split}
 \E[|\int_t^T |Z|^2 dr|^{\frac{p}{2} } ] \le&\, C_p \E \left[ |\Phi(B_T^{\gamma_t}, \ML_{B_T^{\eta_t} })|^p  +   \int_t^T  | \tilde{f}( 0,0 )|^p dr +C_p \int_s^T \|Y (r)\|^p_{L_2} dr\right]\\
 & \, + C_p\, \E \left[\sup_{s\in[t,T]} |Y(s)|^p \right] 
\le\,   C_p  ( 1+ \| \gamma_t \|^p + \| \| \eta_t \| \|^p_{L_2}  ),
\end{split}
\end{equation*}
and thus \eqref{gam1}.

\ \ \ \ \ \ It remains to prove \eqref{gam2}.  Note that $(\delta Y, \delta Z):= (Y-Y', Z-Z')$ solves the following linearized BSDE
\be
\begin{split}
\delta Y (s) &=  \Phi-\Phi' + \int_s^T [f(\Theta_r, \ML_{\Theta^{\eta_t}_r} ) - f(\Theta'_r, \ML_{\Theta^{\eta'_t}_r} ) ] dr - \int_s^T \delta Z(r) d B(r) \\
&= : \delta \Phi + \int_s^T \Big( \alpha_r \delta Y(r) + \beta_r \delta Z(r) + \delta h_r + \delta f_r \Big) dr  - \int_s^T \delta Z(r) d B(r) ,
\end{split}
\ee
where
\beaa
&&\alpha_r := \int_0^1 \partial_y {f} (B^{\gamma_t}_r , Y'+ \theta(Y-Y'), Z, \ML_{\Theta_r} ) d\theta, \\
&&\beta_r :=  \int_0^1 \partial_z {f} (B^{\gamma_t}_r , Y' , Z+\theta(Z-Z'), \ML_{\Theta_r} ) d\theta ,\ \ \delta \tY^\eta := \tY^{\teta}-\tY^{\teta'}, \\
&&\delta{h}_r := f( B^{\gamma_t}_r ,  Y', Z', \ML_{B^{\eta_t}_r} , \ML_{Y^{\eta_t}}  ) - f( B^{\gamma_t}_r ,  Y', Z', \ML_{B^{\eta_t}_r} , \ML_{Y^{\eta'_t}}  ),\quad \text{and} \quad \\
&&\delta f_r := f( B^{\gamma_t}_r ,  Y', Z', \ML_{B^{\eta_t}_r} , \ML_{Y^{\eta'_t}}  ) - f( B^{\gamma'_t}_r ,  Y', Z', \ML_{B^{\eta'_t}_r} , \ML_{Y^{\eta'_t}}  ) .
\eeaa
Using the Lipschitz continuity of $f$ in $(y, z)$ and standard estimate for linear BSDEs (see e.g. \cite{BDH03}), we have
\be\label{ine6}
\E\Big[ \sup_{s\in[t,T] }   | \delta Y (s) |^p +  (\int_t^T | \delta Z(s) |^2 ds )^{\frac{p}{2}}  \Big] \le C_p \Big(  \E| \delta \Phi |^p + \E \Big| \int_t^T (  \delta h_r + \delta f_r  ) dr \Big|^p  \Big).
\ee
Then,  in view of estimate \eqref{eta2}, we have
\be\label{ine4}
\E | \delta \Phi |^p +\E | \int_t^T     \delta h_r   dr |^p + \E | \int_t^T     \delta f_r   dr |^p \le C_K ( \| \gamma_t - \gamma'_t \|^{p} + W_2(\ML_{\eta_t}, \ML_{\eta'_t})^p  ),
\ee
and thus the desired estimate \eqref{gam2}.

\subsection{An extension of \cite[Theorem 4.5]{PW16} without assumption of local Lipschitz continuity in time}

\begin{lem}  Let non-anticipative functional $f: [0,T] \times \D \times \R \times \R^d \mapsto \R$ lie in $\FC^{0,2,2,2}_s.$  Assume that for any $t   \in [0,T]  $ and $\tau \le t,$ $f  $ and all its derivatives are locally Lipschitz continuous on $\D$: for $\phi(t, \cdot )= (I, \partial_{\ome_\tau},   \partial_{\ome_\tau}^2) f (t,\cdot, 0,0), $
	\be
	|\phi(t, \ome)-  \phi(t, \ome') | \le C(1+\|\ome_t\|^k +\|\ome'_t\|^k) (\|\ome_t - \ome'_t \|) , \quad \forall \ (\ome,\ome') \in \D^2,
	\ee 
	for some constant $C$ and integer $k.$ Moreover, suppose that the first-order derivatives in $(y, z)$, as well as their first-order derivatives w.r.t. $(\ome_{\tau}, y, z)$ are uniformly bounded.  If $\Phi $ satisfies \eqref{weakphi}, there is a unique classical solution of the following PPDE
\bea\label{ppde2}
\left\{
\begin{array}{l}
\partial_t u(t,\gam  ) + \frac12 \text{Tr}\ [ \partial_{\ome}^2 u(t,\gam)]
 + f(t,\gam, u(t,\gam), \partial_\ome u(t,\gam) )=0,\\
 \\
 u(T, \gam)= \Phi (\gam_T ),\ \ \ \ (t,\gam )\in [0,T] \times \mC.
\end{array}
\right.
\eea
\end{lem}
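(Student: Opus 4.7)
My plan is to solve \eqref{ppde2} probabilistically via the associated (non mean-field) path-dependent BSDE and then use the partial It\^o-Dupire formula of Corollary~\ref{sito} to verify the PDE, mirroring the argument of Theorem~\ref{mainthm} but without the measure variable.

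For any $(t,\gamma)\in[0,T]\times\mC$, let $(Y^{\gamma_t},Z^{\gamma_t})$ be the unique $\bS^p\times\bH^p$-solution of
\begin{equation*}
Y(s)=\Phi(B_T^{\gamma_t})+\int_s^T f(r,B_r^{\gamma_t},Y(r),Z(r))\,dr-\int_s^T Z(r)\,dB(r),\qquad s\in[t,T],
\end{equation*}
which is well-posed by the Lipschitz continuity of $f$ in $(y,z)$ and the polynomial-growth/Lipschitz condition \eqref{weakphi} on $\Phi$. Define the decoupling field $u(t,\gamma):=Y^{\gamma_t}(t)$. By standard BSDE estimates together with the flow property $Y^{\gamma_t}(s)=Y^{B_s^{\gamma_t}}(s)$, $u$ is non-anticipative and continuous on $[0,T]\times\mC$ with polynomial growth.

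Next I would establish $u\in\FC^{0,2,0}_{s,p}$ by repeating, in the degenerate case (no $\mu$), the arguments behind Proposition~\ref{s-bdd1} and Lemma~\ref{drr}. Namely, for every $\tau\le t$, formally differentiate the BSDE and obtain a linear BSDE for $(\partial_{\omega_\tau}Y^{\gamma_t},\partial_{\omega_\tau}Z^{\gamma_t})$ whose coefficients involve $\partial_{\omega_\tau}f,\partial_y f,\partial_z f$ (bounded by hypothesis). Iterating once more yields $(\partial^2_{\omega_\tau}Y^{\gamma_t},\partial^2_{\omega_\tau}Z^{\gamma_t})$, with the local Lipschitz-polynomial growth of $\partial^2_{\omega_\tau}f$ providing the needed integrability. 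Since $u(t,\gamma)=\hat u_{\mu_t}(t,\gamma)$ in the sense of the PPDE~\eqref{e-hu} with the mean-field terms removed, one also reads off $\partial_{\omega_\tau}u(t,\gamma)=\partial_{\omega_\tau}Y^{\gamma_t}(t)$, and similarly for the second derivative. Continuity of these derivatives in $(\tau,t,\gamma)$ under the premetric $d_{sp}$ follows by the same $\bS^2$-estimate used in the proof of Proposition~\ref{regular}.

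To verify the PPDE at $(t,\gamma)$, for $h>0$ small I would decompose
\begin{equation*}
u(t+h,\gamma_t)-u(t,\gamma)=\bigl[u(t+h,\gamma_t)-\E\,u(t+h,B^{\gamma_t})\bigr]+\bigl[\E\,u(t+h,B^{\gamma_t})-u(t,\gamma)\bigr].
\end{equation*}
Apply the partial It\^o-Dupire formula of Corollary~\ref{sito} (with no measure variable, so the $\partial_\mu$-terms drop out) to $u(t+h,B_r^{\gamma_t})$ on $r\in[t,t+h]$, so that after taking expectations the first bracket equals
$-\frac12\int_t^{t+h}\E\,\mathrm{Tr}\bigl[\partial^2_{\omega_r}u(t+h,B_r^{\gamma_t})\bigr]\,dr$. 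From the BSDE and the flow identity $Y^{\gamma_t}(r)=u(r,B_r^{\gamma_t})$, $Z^{\gamma_t}(r)=\partial_\omega u(r,B_r^{\gamma_t})$, the second bracket equals $-\int_t^{t+h}\E\,f(r,B_r^{\gamma_t},u,\partial_\omega u)\,dr$ plus a martingale of zero expectation. Dividing by $h$ and sending $h\downarrow0^+$, the continuity of $\partial^2_{\omega_r}u$ and of $f$ combined with the dominated convergence theorem (this is precisely where the absence of local Lipschitz continuity in time no longer matters, since we only need pointwise continuity at $r=t$ plus a uniform bound on a small interval) identifies $\partial_t u(t,\gamma)$ and yields~\eqref{ppde2}.

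For uniqueness, if $v\in\FC^{1,2}$ is another classical solution, applying Theorem~\ref{itoformula} (specialized to the state-path case without $\mu$) to $v(r,B^{\gamma_t})$ on $[t,T]$ and using the PPDE shows that $(v(r,B_r^{\gamma_t}),\partial_\omega v(r,B_r^{\gamma_t}))$ solves the same BSDE as $(Y^{\gamma_t},Z^{\gamma_t})$; by the Lipschitz well-posedness, $v(t,\gamma)=Y^{\gamma_t}(t)=u(t,\gamma)$. The main technical point is not the $h\downarrow 0^+$ step---which becomes essentially trivial under our continuity hypotheses---but rather the verification of the $\bS^p$ continuity of $(\partial^2_{\omega_\tau}Y^{\gamma_t},\partial^2_{\omega_\tau}Z^{\gamma_t})$ jointly in $(\tau,t,\gamma)$ under only local Lipschitz and polynomial growth on $\partial^2_{\omega_\tau}f$, which I would handle by truncating $\|\gamma\|\le K$, applying the Lipschitz-BSDE estimates locally, and passing to the limit using Lemma~\ref{unibd}.
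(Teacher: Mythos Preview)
Your proposal is correct and follows essentially the same route as the paper: define $u(t,\gamma):=Y^{\gamma_t}(t)$ from the path-dependent BSDE, establish $u\in\FC^{0,2}_{s,p}$ via the linearized BSDEs for the strong vertical derivatives, then use the decomposition $u(t+h,\gamma_t)-u(t,\gamma)=[u(t+h,\gamma_t)-\E u(t+h,B^{\gamma_t})]+[\E u(t+h,B^{\gamma_t})-u(t,\gamma)]$ together with the partial It\^o--Dupire formula of Corollary~\ref{sito} and the BSDE to identify $\partial_t u$. The paper's proof is terser (it defers the regularity step to the arguments of \cite[Theorems 3.9 and 3.10]{PW16} rather than spelling out the truncation), but the structure and the key identities are the same.
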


\begin{proof}  The uniqueness is a consequence of that of  the following non-Markovian BSDE
	\be\label{pureppde}
	Y^{\gam_t}(s)= \Phi (B^{\gam_t}_T) + \int_s^T f(r, B^{\gam_t}, Y^{\gam_t}(r), Z^{\gam_t}(r)) dr - \int_s^T Z^{\gam_t}(r) dB(r).
	\ee
We now sketch the proof of the existence.  Set
\be
u(t,\gam):= Y^{\gam_t}(t).
\ee
Similar to that of \cite[Theorems 3.9 and 3.10]{PW16}, we have $u \in \FC^{0,2}_{s,p}$ and moreover, 
\be
u(s, B^{\gam_t})= Y^{\gam_t}(s), \quad     \partial_{\ome_t} u(s, B^{\gam_t}) = Z^{\gam_t}(s), \quad s \ge t.
\ee
Then,  applying the partial It\^{o} formula \eqref{se-ito'}, we have that for any $\delta>0,$ 
\begin{align*}
&u(t+\delta , \gam_t) - u(t,\gam_t) \\
&\quad = u(t+\delta, \gam_t)- \E[u(t+\delta, B^{\gam_t})] + \E[u(t+\delta, B^{\gam_t})] - u(t,\gam_t)\\
& \quad = \E \left[ - \int_t^{t+\delta} \partial_{\ome_r} u(t+\delta, B^{\gam_t}_r) dB(r) - \frac12  \int_t^{t+\delta} \text{Tr} [ \partial_{\ome_r}^2 u(t+\delta, B^{\gam_t}_r ) ] dr  \right]\\
& \quad \quad + \E \left[ - \int_t^{t+\delta} f(r, B^{\gam_t}, Y^{\gam_t}(r), Z^{\gam_t}(r) ) dr \right].
\end{align*}
Dividing both sides of the above identity by $\delta$ and taking $\delta \rightarrow 0,$ we complete the proof.

\end{proof}





\begin{thebibliography}{99}

\bibitem{A}
Ahn, H.: Semimartingale integral representation.
\newblock{Ann. Probab.}
\textbf{25}(2), (1997), 997--1010.




\bibitem{B92}
Bensoussan, A.: 
\newblock Stochastic Control of Partially Observable Systems.
\newblock Cambridge University
Press, Cambridge, UK, 1992.

\bibitem{B05}
Bossy, M.: Some stochastic particle methods for nonlinear parabolic PDEs, in GRIP-Research Group on Particle Interaction.
\newblock{ESAIM Proceedings} 
\textbf{15}, (2005), 18--57, MR2441320.


\bibitem{BDH03}
Briand, P., Delyon, B., Hu, Y., Pardoux E. and Stoica, L.: 
\newblock $L^p$-solutions of backward stochastic differential equations.
\newblock{Stoch. Proc. App.} \textbf{108}, (2003), 109--129.

\bibitem{BFY14}
Bensoussan, A., Frehse, J. and Yam, P.:\newblock{The Master Equation in Mean Field Theory}.
ARXIV {1404.4150}


\bibitem{BFG16}
Bayer, C., Friz, P. and Gatheral, J.:\newblock {Pricing under rough volatility}.
\newblock {Quantitative Finance} \textbf{16}, (2016), 887-904.




\bibitem{BFY13}
Bensoussan, A., Frehse, J. and Yam, P.:\newblock Mean Field Games and Mean Field Type Control Theory.
\newblock{Springer Briefs in Mathematics, Springer, New York, 2013.}




\bibitem{BLP09}
Buckdahn, R., Li, J. and Peng, S.:
\newblock Mean-field backward stochastic differential equations and related partial differential equations.
\newblock { Stoch. Proc. App.} \textbf{119}, (2009), 3133-3154.

\bibitem{BLP17}
Buckdahn, R., Li, J., Peng, S. and Rainer, C.:
\newblock Mean-field stochastic differential equations and associated PDEs.
\newblock { Ann. Probab.} \textbf{45}(2), (2017), 824-878.


\bibitem{BLT21}
Bouchard, B., Loeper, G. and Tan, X.:\newblock{Approximate viscosity solutions of path-dependent PDEs and Dupire's vertical differentiability}.\newblock{to appear in Ann. Appl. Probab., preprint available at ARXIV { 2107.01956}}.

\bibitem{C12}
Cardaliaguet, P.: 
 \newblock{Notes on mean field games}. Notes from P.L. Lions' lectures at the Coll\`ege de
France,
\newblock https://www.ceremade.dauphine.fr/ cardalia/MFG100629.pdf, 2012.

\bibitem{CCD}
Chassagneux, J.-F., Crisan, D. and Delarue, F.:
\newblock  A probabilistic approach to classical solutions of the
master equation for large population equilibria.
\newblock  ARXIV {1411.3009}


\bibitem{CD1}
Carmona, R. and Delarue, F.:
\newblock Probabilistic Theory of Mean Field Games with Applications. I: Mean Field FBSDEs, Control and Games.
\newblock {Probability Theory and Stochastic Modelling 83}, Springer, Cham. MR3752669, 2018.

\bibitem{CD2}
Carmona, R. and Delarue, F.:
\newblock Probabilistic Theory of Mean Field Games with Applications. II: Mean Field FBSDEs, Control and Games.
\newblock {Probability Theory and Stochastic Modelling 84}. Springer, Cham. MR3753660, 2018.

%

\bibitem{CD3}
Carmona, R. and Delarue, F.: The master equation for large population equilibriums.
\newblock {Stochastic Analysis and Applications,}
Springer Proc. Math. Stat. 100, 77-128, Springer, Cham, 2014.

\bibitem{CDLL}
Cardaliaguet, P., Delarue, F., Lasry, J.-M. and Lions, P.-L.: The Master Equation and
the Convergence Problem in Mean Field Games. 
 \newblock Annals of Mathematics Studies 201, Princeton Univ.
Press, Princeton, 2019.





\bibitem{CF10}
Cont, R. and Fourni\'e, D.: Change of variable formula for non-anticipative functionals on path space.
\newblock { Journal of Functional Analysis} \textbf{259}, (2010), 1043-1072.




\bibitem{CF13}
Cont, R. and Fourni\'e, D.: Functional It\^o calculas and stochastic integral representation of martingales.
\newblock { Ann. Probab.} \textbf{41}, (2013), 109-133.

\bibitem{CFGRT}
Cosso, A., Federico, S., Gozzi, F., Rosestolato,  M. and Touzi, N.: Path-dependent equations and viscosity solutions in infinite dimension.
\newblock{Ann. Probab.} \textbf{46}, (2018), 126-174.

\bibitem{CGRR23}
Cosso, A., Gozzi, F., Rosestolato, M. and Russo, F.: Path-dependent Hamilton-Jacobi-Bellman equation: Uniqueness of Crandall-Lions viscosity solutions.
ARXIV {2107.05959} 

\bibitem{CV91}
Conze, A. and Viswanathan: Path-dependent options: the case of lookback options.
\newblock{J. Finance} \textbf{46}(5), (1991), 1893-1907.

\bibitem{D09}
Dupire, B.: Functional It\^o calculus.
Bloomberg Portfolio Research Paper No. 2009-04-FRONTIERS, Available at SSRN: https://ssrn.com/abstract=1435551, 2009.







\bibitem{EKTZ14}
Ekren, I., Keller, C., Touzi, N. and Zhang, J.: On viscosity solutions of path dependent pdes.
\newblock { Ann. Probab.} \textbf{42}(1), (2014), 204-236.

\bibitem{ETZ1}
Ekren, I., Touzi, N. and Zhang, J.: Viscosity solutions of fully nonlinear parabolic path dependent
PDEs: Part I.
\newblock  {Ann. Probab.,} \textbf{44}, (2016), 1212-1253.



\bibitem{ETZ2}
Ekren, I., Touzi, N. and Zhang, J.: Viscosity solutions of fully nonlinear parabolic path dependent
PDEs: Part II.
\newblock  {Ann. Probab.} \textbf{44}, (2016), 2507-2553.

\bibitem{F14}
Fischer, M.: On the connection between symmetric N-player games and mean field games.
\newblock {Ann. Appl. Probab.} \textbf{27}(2), (2017), 757-810.


\bibitem{FZ16}
Flandoli F. and Zanco, G.: An infinite-dimensional approach to path-dependent Kolmogorov equations.
\newblock { Ann. Probab. } \textbf{44}(4), (2016), 2643-2693.


\bibitem{H07}
Haug, E.: The complete guide to option pricing formulas. Vol. 2.
\newblock {New York, McGraw-Hill, 2007.}

\bibitem{GLL}
Gueant, O., Lasry J.-M. and Lions, P.-L.: Mean field games and applications.
\newblock Paris-Princeton lectures on
mathematical finance, 2010.

\bibitem{GM06}
Gozzi, F. and Marinelli, C.: Stochastic optimal control of delay equations arising in advertising
models. in Stochastic Partial Differential Equations and Applications VII, G. D. Prato
and L. Tubaro, eds., Chapman \& Hall/CRC, Press, Boca Raton, FL, 133-148, 2006.


\bibitem{GMMZ22}
Gangbo, W., M{\'e}sz{\'a}ros, A., Mou, C. and Zhang, J.: Mean field games master equations with nonseparable Hamiltonians and displacement monotonicity.
\newblock {The Annals of Probability} \textbf{50}(6), (2022), 2178-2217.

%
%

\bibitem{GJR14}
Gatheral, J., Jaisson,  T. and Rosenbaum, M.:\newblock{Volatility is rough}.
\newblock{Quantitative Fiannce} \textbf{18}(6), (2018), 933-949.


\bibitem{GS14}
Gomes, D. A. and Sa\'ude, J.: Mean field games models - a brief survey.
\newblock Dynamic Games and Applications \textbf{4}, (2014), 110-154.




\bibitem{HMC06}
Huang, M, Malham\'e, R. P. and Caines, P. E.: Large population stochastic dynamic games: Closed-loop McKean-Vlasov systems and Nash certainty equivalence principle.\newblock {Commun. Inf. Syst. } \textbf{6}, (2006), 221-251.


\bibitem{IM17}
Ichiba, T. and Mousavi, S. M.: Option pricing with delayed information.
ARXIV {1707.01600}

\bibitem{K56}
Kac, M.: Foundation of kinetic theory.
\newblock Proc. Third Berkeley Symp. on Math.
Stat. and Probab. 3, 171-197, Univ. of Calif. Press, 1956.




\bibitem{L14}
Lacker, D.: A general characterization of the mean field limit for stochastic differential games.
\newblock {Probab. Theory Relat. Fields} \textbf{165}, (2016), 581-648.




\bibitem{LL}
Lasry, J.-M. and Lions, P. L.: Mean field games.
\newblock { Jpn. J. Math.} \textbf{2}, (2007), 229-260.


\bibitem{L}
Lions, P. L.:
\newblock Cours au Coll\`ege de France.
\newblock {Available at www.college-de-frace.fr.}

\bibitem{MZ}
Ma, J. and Zhang, J.: Path regularity for solutions of backward stochastic differential equations.
\newblock Probab. Theory Relat. Fields \textbf{122}, (2002), 163-190.


\bibitem{M67}
McKean, H. P.: Propagation of chaos for a class of non linear parabolic equations.
\newblock  in Lecture Series in Differential Equations 7, 41-57, 1967.


\bibitem{N06}
Nualart, D.: The Malliavin calculus and related topics.
\newblock Probability and its Applications (New-York), Springer-Verlag, Berlin, second ed., 2006.


\bibitem{P19}
Peng, S.: Nonlinear Expectations and Stochastic Calculus under Uncertainty-with Robust CLT and G-Brownian Motion.
\newblock Probability Theory and Stochastic Modelling, Volume 95, Springer, Berlin, 2019.




\bibitem{P}
Peng, S.: Backward stochastic differential equation, nonlinear expectation and their applications.
\newblock In Proceedings of International Congress of Mathematicians,
Volume I, 393-432, Hindustan Book Agency, New Delhi, MR2827899, 2010.

\bibitem{PS15}
Peng, S. and Song, Y.:\newblock{$G-$expectation weighted Sobolev spaces, backward SDE and path dependent PDE}.
\newblock {J. Math. Soc. Japan} \textbf{67}(4), (2015), 1725-1757.

\bibitem{PW16}
Peng, S. and Wang, F.:\newblock BSDE, path-dependent PDE and nonlinear Feynman-Kac formula.
\newblock { Science China Math.} \textbf{59}(1), (2016), 19-36.



\bibitem{PW18}
Pham, H. and Wei, X.:\newblock Bellman equation and viscosity solutions for mean-field stochastic
control problem.
\newblock ESAIM Control Optim. Calc. Var. \textbf{24}, (2018), 437-461.



\bibitem{S20}
Saporito, Y. F.: Stochastic Control and Differential Games with Path-Dependent Influence of Controls on Dynamics and Running Cost.
\newblock SIAM J. Control Optim. \textbf{57}(2), (2019), 1312-1327.

\bibitem{S84}
Sznitman, A. S.: Nonlinear reflecting diffusion process, and the propagation of
chaos and fluctuations associated.
\newblock J.F.A. \textbf{56}, (1984), 311-336.


\bibitem{S85}
Sznitman, A. S.: A fluctuation result for nonlinear diffusions.
\newblock in Infinite dimensional
Analysis and Stochastic Processes, (S. Albeverio, ed.), Pitman, 145-160, 1985.

\bibitem{S86}
Sznitman, A. S.:
 \newblock A propagation of chaos result for Burgers's equation.
 \newblock  Probability
Theory and Related Fields \textbf{71}, (1986), 581-613.


\bibitem{S91}
Sznitman,  A. S.: Topics in propagation of chaos.
 \newblock Ecole d'ite de Probabiliies
de Saint-Flour XIX-1989, Led. Notes in Math., 1464, Springer, 1991.




\bibitem{SZ19}
Saporito, Y. F. and Zhang, J.: Stochastic control with delayed information and related nonlinear
master equation.
\newblock {SIAM J. Control Optim.} \textbf{57}, (2019), 693-717.


\bibitem{TZ15}
Tang, S. and Zhang, F.: Path-dependent optimal stochastic control and viscosity solution of associated Bellman equations.
\newblock Discrete and Continuous Dynamical Systems \textbf{35}(11), (2015), 5521-5553.

\bibitem{WZ20}
Wu, C. and  Zhang, J.: Viscosity solutions to parabolic master equations and Mckean-Vlasov SDEs with closed-loop controls.
\newblock {Ann. App. Prob.} \textbf{30}(2), (2020), 936-986.

\bibitem{Z17}
Zhang, J.: Backward Stochastic Differential Equations---From Linear to Fully Nonlinear Theory.
\newblock Springer, New York, 2017.


\bibitem{Z20}
Zhou, J.: Viscosity Solutions to Second Order Path-Dependent Hamilton-Jacobi-Bellman Equations and Applications.
\newblock to appear in Ann. Appl. Probab., preprint available at ARXIV {2005.05309}.








\end{thebibliography}
\end{document}